\documentclass[11pt]{amsart}
\usepackage[top=1.0in,bottom=1.0in,left=1.0in,right=1.0in]{geometry}
\usepackage[T1]{fontenc}
\usepackage[utf8]{inputenc}
\usepackage{lmodern}
\usepackage{amsmath,amsthm,amssymb,mathtools}
\usepackage{enumitem}
\usepackage{xcolor}
\usepackage{tikz}
\usetikzlibrary{arrows.meta}
\usepackage{hyperref}
\usepackage{microtype}
\definecolor{ash}{RGB}{120,135,125}
\definecolor{figBlue}{HTML}{35689A}
\definecolor{figTeal}{HTML}{24877D}
\definecolor{figOrange}{HTML}{C98232}
\definecolor{figInk}{HTML}{344256}

\tikzset{
  every picture/.append style={line cap=round,line join=round},
  paper vertex/.style={circle,draw=figInk!75,fill=white,line width=.85pt,
    minimum size=5.8mm,inner sep=1.4pt,font=\small,text=figInk},
  paper arc/.style={-{Stealth[length=2.1mm,width=1.5mm]},draw=figInk!65,
    line width=.85pt,shorten <=.6pt,shorten >=.6pt},
  paper cop/.style={paper vertex,draw=figOrange!85!black,fill=figOrange!24,line width=1pt},
  paper reach/.style={paper vertex,draw=figTeal,fill=figTeal!13},
  paper label/.style={font=\small,text=figInk,align=center},
  paper panel/.style={rounded corners=8pt,line width=.6pt}
}

\hypersetup{colorlinks=true, linkcolor=blue, citecolor=blue, urlcolor=ash,
  pdftitle={Directed Isoperimetry and Infinitely Fast Robbers},
  pdfauthor={Arindam Biswas}}
\newtheorem{theorem}{Theorem}[section]
\newtheorem{lemma}[theorem]{Lemma}
\newtheorem{proposition}[theorem]{Proposition}
\newtheorem{corollary}[theorem]{Corollary}
\theoremstyle{remark}
\newtheorem{remark}[theorem]{Remark}
\DeclareMathOperator{\Cay}{Cay}
\DeclareMathOperator{\Sch}{Sch}
\DeclareMathOperator{\diam}{diam}
\DeclareMathOperator{\tw}{tw}
\DeclareMathOperator{\pw}{pw}
\newcommand{\Z}{\mathbb Z}
\newcommand{\R}{\mathbb R}
\newcommand{\F}{\mathbb F}
\newcommand{\normtor}[1]{\left\lVert#1\right\rVert_{\R/\Z}}
\title{Directed isoperimetry and infinitely fast robbers}

\author{Arindam Biswas}
\email{arin.math@gmail.com}
\date{\today}

\subjclass[2020]{Primary 05C57; Secondary 05C20, 05C25, 05C48, 05C50, 91A24, 91A43}

\keywords{Cops and Robbers, infinitely fast robber, directed vertex isoperimetry,
Cayley digraphs, Schreier digraphs, cyclic quotients, spectral gap, treewidth,
pathwidth, Heisenberg groups}

\allowdisplaybreaks
\begin{document}

\begin{abstract}
We study the fast version of the Cops and Robber game on finite digraphs with unit-speed cops and an unbounded-speed robber. We prove a lower bound for the cop number of every
strongly connected digraph in terms of its order, maximum out-degree, and
out-vertex expansion, extending Mehrabian's isoperimetric lower bound
to digraphs, where connected components are replaced
by forward reachable sets. For strongly connected abelian Cayley digraphs of fixed
out-degree, we show that the ratio of the cop number to the number of vertices is comparable to each of
out-vertex expansion, vertex expansion of the underlying undirected graph,
minimum nontrivial character displacement, the square root of the
unnormalized Laplacian spectral gap of the underlying undirected graph, and
the reciprocal of its diameter. The
comparison constants depend polynomially on the out-degree. In the
undirected case, we also compare pathwidth with these parameters,
complementing the bounded-degree treewidth comparison of
Alon and Mehrabian. Combining the directed lower bound with recent
cyclic sweep techniques for abelian Cayley graphs, we determine the order of
the cop number of positively oriented tori and directed Heisenberg graphs.
\end{abstract}

\maketitle

\section{Introduction}

In the classical game of Cops and Robber on a finite connected graph $\Gamma$,
the cops choose their initial vertices, and the robber then chooses its
initial vertex. The cops and robber move alternately, with the cops moving
first; each cop or robber may stay at its current vertex or move to
an adjacent vertex on its turn. The cops win when a cop occupies the robber's vertex.
The cop number $c(\Gamma)$ is the least number of cops that can guarantee
capture. The game was introduced independently by Nowakowski--Winkler
\cite{NowakowskiWinkler} and Quilliot \cite{Quilliot}; see the book of
Bonato and Nowakowski \cite{BonatoNowakowski} for background.

In the fast-robber variant, each cop still moves at most one edge per turn,
while the robber may traverse a path of length at most $s\geq1$, avoiding all
vertices occupied by cops. We study the unbounded-speed case, in which the
robber may traverse any finite cop-free path, and write
$c_{1,\infty}(\Gamma)$ for the least number of cops that guarantee capture.
On a digraph, both the cops and the robber follow arc directions. The
fast-robber variant was developed systematically by
Fomin--Golovach--Kratochv\'il--Nisse--Suchan \cite{FominEtAl} and has since
been studied in several settings; see, for example,
\cite{AlonMehrabian2011,AlonMehrabian2015,
FriezeKrivelevichLoh,
KinnersleyTownsend,Mehrabian2012}.
The classical directed game, in which the robber also has unit speed, has been
studied by Frieze--Krivelevich--Loh, Loh--Oh, Kinnersley, and
Hosseini--Mohar, among others
\cite{FriezeKrivelevichLoh,LohOh,KinnersleyLength,HosseiniThesis,HosseiniMohar}.
Other directed graph-searching games also use a fast robber, but allow cops
to occupy sets of vertices in one move; they therefore define a different
pursuit game \cite{BerwangerDawarHunterKreutzer}. The game studied here
combines features of these lines of work: arc directions constrain both
players, as in the classical directed game, and the robber has unbounded
speed, while each cop still moves along a single arc. The cyclic-sweep method
of \cite[Theorem~1.3 and Proposition~4.6]{Biswas2026} gives upper bounds
for the cop number of abelian Cayley graphs and digraphs.

Mehrabian related the unbounded-speed cop number of an undirected graph to
its vertex expansion \cite[Theorem~3.3(c)]{Mehrabian2012}. For every
connected undirected graph $\Gamma$ of order $n$ and maximum degree $\Delta$,
he proved
\begin{equation}\label{eq:mehrabian-lower-intro}
  c_{1,\infty}\left(\Gamma\right)\geq
  \frac{n\,\iota_v\left(\Gamma\right)}{4\Delta},
\end{equation}
where $\iota_v(\Gamma)$ is the vertex-isoperimetric number. For bounded
degree, this gives a lower bound of order $n\iota_v(\Gamma)$, whereas
cyclic sweeps give upper bounds in terms of group quotients
\cite[Proposition~4.6]{Biswas2026}. This raises two questions: whether
the quotient upper bound can be controlled by vertex expansion, and whether
an analogous lower bound holds for digraphs.
The upper-bound question for undirected abelian Cayley graphs was posed in
\cite[Question~5.5]{Biswas2026}.

We prove a lower bound in terms of out-vertex expansion for every strongly
connected digraph and a matching upper bound, up to degree-dependent
constants, for abelian Cayley digraphs. The lower-bound proof has two
elements, a robber invariant and an isoperimetric escape proposition, in which
forward reachable sets play the role of connected components; see
Section~\ref{sec:directed-lower}.
For the upper bound, we relate the character displacement governing a
cyclic sweep \cite[Proposition~4.6]{Biswas2026} to vertex expansion through
the spectral gap of the underlying undirected graph.

\subsection{Main results}
Our first result is the following lower bound.

\begin{theorem}\label{thm:intro-directed-lower}
Let $\Gamma^+$ be a strongly connected digraph on $n\geq3$ vertices, with
maximum out-degree $D^+$. Then
\[
  c_{1,\infty}\left(\Gamma^+\right)>
  \frac{n\,\iota_v^+\left(\Gamma^+\right)}{4\left(D^++1\right)}.
\]
\end{theorem}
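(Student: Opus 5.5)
We argue by contradiction. Fix $k\le n\iota_v^+(\Gamma^+)/(4(D^++1))$ cops and construct a robber strategy that avoids capture forever. Here $\iota_v^+(\Gamma^+)$ is the minimum of $|N^+(S)\setminus S|/|S|$ over nonempty $S$ with $|S|\le n/2$. As in Mehrabian's undirected argument, the robber keeps an invariant in which connected components are replaced by forward reachable sets.

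For a cop configuration $C$, let $R(v,C)$ be the set of vertices reachable from $v$ by directed paths avoiding $C$ (with $v\notin C$). The invariant is that, just after the robber moves, it sits at a vertex $v$ with $|R(v,C)|>n/2$, where $C$ is the current cop set. Since the robber has unbounded speed, any vertex of $R(v,C)$ can be reached in one move, provided it remains reachable once the cops have moved.

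\emph{Initial step.} The $k$ cops choose positions $C_0$. We claim some cop-free vertex $v$ has $|R(v,C_0)|>n/2$. Suppose not. Every forward reachable set $R=R(v,C_0)$ is closed under cop-free out-neighbours, so $N^+(R)\setminus R\subseteq C_0$. Applying expansion to $R$ gives $|R|\,\iota_v^+\le k$, hence $|R|\le k/\iota_v^+\le n/(4(D^++1))<n/2$. This alone gives no contradiction, so we need a covering argument. Take a vertex $v$ whose reachable set is inclusion-maximal, i.e.\ a sink of the condensation of $\Gamma^+-C_0$. All out-neighbours of $R(v,C_0)$ then lie in $C_0$. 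Strong connectivity of $\Gamma^+$ and a counting argument over sink classes should produce a contradiction, possibly after choosing the robber's start more cleverly. Alternatively, we let the robber's first choice be handled by the escape proposition below with $C_{\mathrm{old}}=\emptyset$.

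\emph{Escape proposition.} This is the key step. Suppose the robber is at $v$ with $|R(v,C)|>n/2$, and the cops move from $C$ to $C'$, each along at most one arc. We must find a vertex $w$ that is reachable from $v$ avoiding $C'$ and satisfies $|R(w,C')|>n/2$.

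Let $S=R(v,C)\setminus C'$; a cop entering $R(v,C)$ can only remove vertices, at most $k$ of them. Consider the set $T$ of vertices of $S$ reachable from $v$ in $\Gamma^+-C'$. If $|T|>n/2$, then $v$ itself works, and $w=v$. Otherwise we apply out-expansion to $T$: we get $|N^+(T)\setminus T|\ge \iota_v^+|T|$, and every such out-neighbour is blocked, lying either in $C'$ or in $C$ (the old boundary). Each old cop position in $C$ that blocks must be adjacent to $T$ and was occupied before the move.

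The point of the factor $D^++1$ and of the $4$ is this. Each cop contributes at most $D^++1$ vertices to $C\cup C'$ that can matter, namely its old position and its possible new positions. The real issue is that the old cop set $C$ also bounds $T$. To handle this we refine the invariant, in Mehrabian's style, to require $|R(v,C)\setminus N^+[C]|$ large, and we use a halving step: among the pieces into which $R(v,C)\setminus C'$ splits, the robber chooses the one with largest out-reach. We then show that a set of size between $n/4$ and $n/2$ that is closed except at fewer than $k(D^++1)$ vertices violates expansion, since $k(D^++1)<\iota_v^+ n/4$. This forces the piece to have reach greater than $n/2$.

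The strict inequality in the theorem comes from this strict count together with $n\ge3$, which guarantees a nontrivial cop-free piece.

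The main obstacle is the directed asymmetry. Forward reachable sets are not disjoint, unlike components, and reachability from $v$ into a large set is not symmetric. So the decomposition must use the condensation DAG of $\Gamma^+-C'$, restricted to what $v$ reaches. The first approach to try is to take $w$ to be a vertex in a maximal reach class inside $R(v,C)$ and to compare $R(w,C')$ with $R(v,C)$. The vertices lost are either within $N^+$-distance one of the cops, which number at most $k(D^++1)$ and are absorbed by expansion, or are unreachable because of a bounded boundary, which is excluded by expansion once the lost portion has size at least $n/4$.
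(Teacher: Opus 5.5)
There is a genuine gap. The invariant you maintain does not survive a cop move, and the lemma that restores a correct invariant is not proved. That same lemma is also what handles the initial placement, which you leave at ``should produce a contradiction''.

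\textbf{The invariant.} Knowing only $v\notin C$ and $|R(v,C)|>n/2$ does not protect the robber for even one round. If $v\in N^+(C)$, a cop simply steps onto $v$. If all paths leaving $v$ pass through a single vertex of $N^+(C)\setminus C$, one cop step confines the robber to $\{v\}$. Your refinement ``$|R(v,C)\setminus N^+[C]|$ large'' has the same defect. It still allows $v\in N^+(C)$, and the paths witnessing membership in $R(v,C)$ may run through $N^+[C]\setminus C$, which is exactly where the cops can land.

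The invariant that works is $v\notin N^+[C]$ and $|R(v,N^+[C])|>n/2$, with reachability taken in $\Gamma^+-N^+[C]$. The pursuit step is then pure monotonicity. The new cop set $C'$ lies in $N^+[C]$, so $v$ is not captured. Moreover $Z:=R(v,C')\supseteq R(v,N^+[C])$ has more than $n/2$ vertices, and every one of them is a legal destination. Your sets $T$, the bound via $C\cup C'$, and the condensation DAG are unnecessary. Note also that in your ``otherwise'' case you never show that the piece of largest reach is reachable from $v$ in $\Gamma^+-C'$, which is the one thing the robber needs.

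\textbf{The missing lemma.} Write $h=\iota_v^+(\Gamma^+)$. The missing statement is: if $|B|\le nh/4$ and $|Z|>n/2$, then some $w\in Z\setminus B$ has $|R(w,B)|>n/2$. Apply it with $B=N^+[C']$, which has at most $(D^++1)k\le nh/4$ vertices. For the start, apply it with $Z=V(\Gamma^+)$.

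To prove it, suppose every $R(w,B)$ with $w\in Z\setminus B$ has at most $n/2$ vertices. Each such set is out-closed in $\Gamma^+-B$, so its out-boundary lies in $B$, and expansion bounds its size by $q=\lfloor |B|/h\rfloor$. The idea you are missing is that \emph{unions} of these overlapping reachable sets are again out-closed in $\Gamma^+-B$. Add them one at a time; the first partial union exceeding $q$ has size in $(q,2q]$. If $2q\le\lfloor n/2\rfloor$, this union is admissible, and expansion forces its size to be at most $q$, a contradiction.

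You also need the full union to exceed $q$ at all, i.e.\ $|Z\setminus B|>q$. This follows from $|B|+q\le\lfloor n/2\rfloor$. Both integer bounds are immediate when $h\le 1$. When $h>1$ they need the extra observation that $n$ is then odd and $h=(m+1)/m$ with $m=\lfloor n/2\rfloor$.

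Sink classes of the condensation cannot substitute for this step. They need not cover $Z\setminus B$, so their union may be too small; you need the reachable sets of all vertices of $Z\setminus B$.
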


The proof is given in Theorem~\ref{thm:directed-lower}. Together with the trivial upper bound $c_{1,\infty}\left(\Gamma^+\right)\leq n$,
Theorem~\ref{thm:intro-directed-lower} implies that every family of
strongly connected digraphs with out-degree at most $D$ and out-vertex
expansion at least $\varepsilon>0$ has
$c_{1,\infty}\left(\Gamma^+\right)=\Theta_{D,\varepsilon}\left(n\right)$
(Corollary~\ref{cor:directed-expanders}). Combining the directed lower bound with cyclic quotient sweeps
\cite[Proposition~4.6]{Biswas2026} gives the following two results. The torus $\vec T_{m,k}$ and the Heisenberg graph
$\mathcal H_p$ are defined in Sections~\ref{sec:directed-tori}
and~\ref{sec:heisenberg}, respectively.

\begin{theorem}[Directed tori and Heisenberg graphs]
\label{thm:intro-applications}
\leavevmode
\begin{enumerate}[label=\textup{(\roman*)}]
\item For integers $m\geq3$ and $k\geq1$, the positively oriented torus
$\vec T_{m,k}$ has order
$n=m^k$. Uniformly in $m$ for fixed $k$,
\[
 c_{1,\infty}\left(\vec T_{m,k}\right)
 =\Theta_k\left(m^{k-1}\right)
 =\Theta_k\left(n^{1-1/k}\right).
\]
\item For every odd prime $p$, the directed Heisenberg graph
$\mathcal H_p$ satisfies
\[
 c_{1,\infty}\left(\mathcal H_p\right)
 =\Theta\left(p^2\right).
\]
\end{enumerate}
\end{theorem}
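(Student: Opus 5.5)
Each part needs a lower and an upper bound. The lower bound will come from Theorem~\ref{thm:intro-directed-lower}: since out-degrees are fixed ($D^+=k$ for the torus, $D^+=2$ for $\mathcal H_p$), it suffices to show $\iota_v^+(\vec T_{m,k})\gtrsim_k 1/m$ and $\iota_v^+(\mathcal H_p)\gtrsim 1/p$. The upper bound will come from the cyclic quotient sweep \cite[Proposition~4.6]{Biswas2026}: choose a homomorphism onto a cyclic group $\Z/N$ and bound the sweep cost.

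\textbf{Torus lower bound.} Here $\vec T_{m,k}=\Cay(\Z_m^k,\{e_1,\dots,e_k\})$. For a set $S$ with $|S|\le n/2$, I will bound the out-boundary $|N^+(S)\setminus S|$ by projecting to lines. Along each directed cycle $\{x+te_i\}$, every maximal run of $S$ that is not the whole cycle contributes one out-boundary vertex (the vertex just after the run). A directed Loomis--Whitney/line-counting argument then gives $|\partial^+S|\gtrsim_k |S|^{1-1/k}\ge |S|/m\cdot c_k$. Concretely, either some direction $i$ has many lines meeting $S$ without being contained in it, or $S$ contains many full lines in every direction, which forces $|S|>n/2$. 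This should give $\iota_v^+\ge c_k/m$ and hence $c_{1,\infty}\gtrsim_k m^{k-1}$.

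\textbf{Torus upper bound.} Use the sum map $\Z_m^k\to\Z_m$. Every generator maps to $+1$, so the fibers are layers of size about $m^{k-1}$. A sweep by one layer of $\Theta_k(m^{k-1})$ cops, advancing one layer per turn, confines the robber; the bound then follows from Proposition~4.6 with this quotient.

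\textbf{Heisenberg graph.} $\mathcal H_p$ is presumably the Cayley digraph of the mod-$p$ Heisenberg group with generators $x,y$, of order $p^3$.
\begin{itemize}[label=\textendash]
\item \emph{Upper bound.} Use the abelianization map onto $\Z_p^2$ composed with $(a,b)\mapsto a+b\in\Z_p$. Its fibers have size $p^2$, so a sweep uses $O(p^2)$ cops.
\item \emph{Lower bound.} This needs $\iota_v^+(\mathcal H_p)\gtrsim 1/p$. Since $p^3/p=p^2$, Theorem~\ref{thm:intro-directed-lower} would then give $\Omega(p^2)$.
\end{itemize}

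The Heisenberg lower bound is the main obstacle. For the undirected Cayley graph, the diameter is $\Theta(p)$ and the spectral gap is $\Theta(1/p^2)$ by the standard Heisenberg estimates. Cheeger gives vertex expansion only $\gtrsim 1/p^2$, which is too weak. The better route is a Varopoulos/Coulhon--Saloff-Coste type isoperimetric inequality, $|\partial S|\ge |S|/(C\,\rho(|S|))$, where $\rho(v)$ is the radius of a ball of volume $v$. Since $\rho\le\diam=O(p)$, this yields $1/p$. To pass from the undirected boundary to the out-boundary, I would use that in a finite group Cayley digraph the inverse generator $x^{-1}=x^{p-1}$ is a word of length $p-1$. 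That costs a factor of $p$, so instead I would argue directly that each $x$-coset cycle that $S$ meets but does not fill contributes an out-neighbour outside $S$. Then I would rerun the Coulhon--Saloff-Coste averaging argument $\sum_g|Sg\setminus S|$ over $g$ in a ball, using only positive words in $x,y$ of length $O(p)$. These words suffice because every element is a positive word of length $O(p)$, and the count $|Sw\setminus S|\le \ell(w)\,|\partial^+S|$ holds for positive words $w$.
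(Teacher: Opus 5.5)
Your proposal is correct, but your lower bounds take a different route from the paper's, and your upper-bound sweep is misdescribed (see the end).

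\textbf{Paper's lower bounds.} The paper never bounds directed boundaries from below directly. It applies Babai--Szegedy (Lemma~\ref{lem:babai-szegedy}) to the underlying undirected graph, whose diameter is $kL$ for the torus and at most $2p$ for $\mathcal H_p$. It then passes to out-expansion with the Eulerian comparison $|\partial_vA|\le(D^-+1)|\partial_v^+A|$ of Lemma~\ref{lem:in-out-comparison}, losing only a factor $k+1$, respectively $3$. So your worry that the undirected-to-directed transfer costs a factor $p$ is unfounded: crossing arcs balance in an Eulerian digraph, and no inverse generator ever has to be rewritten as a positive word.

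\textbf{Your positive-word averaging.} Your workaround is nonetheless sound. The identity $\sum_{g\in G}|Ag\setminus A|=|A|(n-|A|)$ and the right-peeling bound $|As_1\cdots s_\ell\setminus A|\le\sum_j|As_j\setminus A|\le\ell\,|\partial_v^+A|$ give $\iota_v^+(\Cay(G,S))\ge1/(2R^+)$ for every finite group, where $R^+$ is the directed diameter. You only assert $R^+(\mathcal H_p)=O(p)$, and this must be checked. For example, $y^bx^a\cdot x^{p-1}y^{p-\tau}xy^{\tau}=(a,b,\tau)$ for $0\le a,b,\tau\le p-1$, so $R^+\le4p-2$.

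\textbf{Your torus argument.} The same averaging handles the torus directly, since $R^+=k(m-1)$, so Loomis--Whitney is optional. Your Loomis--Whitney argument is also correct once the dichotomy is placed in a single direction $i$ with $|\pi_i(S)|\ge|S|^{1-1/k}$, where $\pi_i$ forgets the $i$th coordinate. At most $|S|/m\le2^{-1/k}|S|^{1-1/k}$ of those lines are full, and the partial ones are disjoint and each contains an out-boundary vertex. Hence $\iota_v^+\ge(2^{1/k}-1)/m$.

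\textbf{Comparison.} Both of your bounds have order $1/(km)$, better in $k$ than the paper's $2/((k+1)(2kL+1))$. The paper's route, in exchange, needs only the undirected diameter, which is easier to control because inverses are available. It also reuses one transfer lemma throughout (e.g.\ Corollary~\ref{cor:cop-diameter}).

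\textbf{Upper bounds.} Your quotient maps, the coordinate sum and $(a,b,c)\mapsto a+b$, have $Q=1$, like the paper's coordinate projections, and give the same bounds $2m^{k-1}$ and $2p^2$. However, a single advancing layer does not confine the robber on a directed cyclic quotient: the robber simply runs ahead (compare $c_{1,\infty}(\vec T_{m,1})=2$). The sweep needs one fixed barrier and one moving barrier, which is exactly the factor $2$ in Proposition~\ref{prop:cyclic-sweep}.
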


Explicit bounds and proofs are given in Theorems~\ref{thm:directed-tori}
and~\ref{thm:heisenberg-cops}. The torus family has out-degree $D=k$ and
attains the exponent $n^{1-1/D}$ in the upper bound for families whose
out-degree is bounded by the fixed number $D$ in
\cite[Theorem~1.3]{Biswas2026}. The
Heisenberg argument also
extends to composite moduli and higher dimension; see
Theorem~\ref{thm:heisenberg-general}. Since $\mathcal H_p$ has out-degree $2$
and cop number of order $n^{2/3}$, it also shows that the abelian
$O_2(n^{1/2})$ bound of \cite[Theorem~1.3]{Biswas2026} cannot extend to
all two-generator finite groups.

The directed lower bound, cyclic sweeps
\cite[Proposition~4.6]{Biswas2026}, and the Buser inequality
\cite{KlartagKozmaRalliTetali} give the following comparisons for abelian
Cayley digraphs.

\begin{theorem}\label{thm:intro-characterization}
Let $\Gamma^+=\Cay\left(G,S\right)$ be a strongly connected abelian Cayley digraph
on $n\geq3$ vertices and with out-degree $d$. Let
$\Gamma:=U(\Gamma^+)=\Cay\left(G,S\cup(-S)\right)$ be its underlying
undirected graph, and let $\lambda_1\left(\Gamma\right)$ denote the
smallest positive eigenvalue of its unnormalized Laplacian
$L_{\Gamma}=D_{\Gamma}-A_{\Gamma}$, where
$D_{\Gamma}$ and $A_{\Gamma}$ are its degree and adjacency
matrices, respectively.
Then
\[
 \begin{split}
 c_{1,\infty}\left(\Gamma^+\right)
 &=\Theta_d\left(n\iota_v^+\left(\Gamma^+\right)\right)
 =\Theta_d\left(n\iota_v\left(\Gamma\right)\right)
 =\Theta_d\left(n\delta\left(\Gamma^+\right)\right)
 =\Theta_d\left(n\sqrt{\lambda_1\left(\Gamma\right)}\right)
 =\Theta_d\left(\frac{n}{\diam\left(\Gamma\right)}\right).
 \end{split}
\]
\end{theorem}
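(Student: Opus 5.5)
The plan is to prove a cyclic chain of inequalities, each with constants polynomial in $d$:
\[
 n\iota_v^+ \lesssim c_{1,\infty}\lesssim n\delta \lesssim n\sqrt{\lambda_1}\lesssim n\iota_v \lesssim n\iota_v^+ ,
\]
and then to attach $n/\diam(\Gamma)$ separately. Here $\delta(\Gamma^+)$ is the minimum nontrivial character displacement, i.e.\ the minimum over nontrivial characters $\chi$ of $\max_{s\in S}\normtor{\chi(s)}$.

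The chain would be filled in as follows.
\begin{enumerate}[label=(\arabic*)]
\item The first inequality is Theorem~\ref{thm:intro-directed-lower}, with constant $4(d+1)$.
\item The second is the cyclic sweep bound \cite[Proposition~4.6]{Biswas2026}. Take a character $\chi$ attaining $\delta$. It gives a cyclic quotient $G\to\Z/q$ in which every generator moves by at most about $\delta q$. A sweep of the level sets with $O_d(n\delta)$ cops then catches the robber; I would quote the proposition in the form $c_{1,\infty}\le C(d)\, n\,\delta$.
\item For the third, use that the characters are the Laplacian eigenvectors of $\Gamma$. The eigenvalue of $\chi$ is $\sum_{s\in S\cup(-S)}(1-\cos 2\pi\chi(s))$, which is comparable to $\sum_s\normtor{\chi(s)}^2$. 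Hence $\lambda_1\asymp_d \delta^2$ (the upper and lower constants are $O(d)$), so $\delta\asymp\sqrt{\lambda_1}$.
\item The fourth is the Buser-type inequality for Cayley graphs \cite{KlartagKozmaRalliTetali}, $\lambda_1\lesssim_d \iota_v^2$. It applies because $\Gamma$ is a Cayley graph of degree at most $2d$.
\item For the last, compare boundaries. In an abelian Cayley digraph the out-boundary $N^+(A)\setminus A$ and the in-boundary have comparable size: for each $s$ the translate $A+s$ has $|(A+s)\setminus A|=|A\setminus(A+s)|=|(A-s)\setminus A|$. Thus the undirected boundary is at most $2|N^+(A)\setminus A|$, and so $\iota_v\le 2\iota_v^+$.
\end{enumerate}
This closes the cycle, and all parameters are mutually comparable.

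For the diameter, I would use $\sqrt{\lambda_1}\gtrsim_d 1/\diam$ together with $\delta\lesssim 1/\diam$.
\begin{itemize}
\item For the lower bound on $\delta$: if $\chi$ is nontrivial, some $g$ has $\normtor{\chi(g)}\ge 1/3$, and $g$ is a word of length at most $\diam(\Gamma)$ in $S\cup(-S)$. Hence $\max_s\normtor{\chi(s)}\ge 1/(3\diam)$, that is, $\delta\gtrsim 1/\diam$.
\item For the upper bound on $\delta$: I would use a Dirichlet/pigeonhole argument on the dual group. Among $\diam$-many suitable characters, or via counting, one finds a nontrivial $\chi$ with all $\normtor{\chi(s)}\lesssim_d 1/\diam$. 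Alternatively, the standard volume growth fact for abelian groups gives $\iota_v\lesssim_d 1/\diam$ directly: take a ball of radius about $\diam/2$ in $\Gamma$. Polynomial growth of degree at most $2d$ gives a radius $r\le\diam/2$ with $|B_{r+1}|\le(1+O(d/r))|B_r|$, and such a ball has at most half the vertices once constants are adjusted.
\end{itemize}
I would take the ball route: $\iota_v\lesssim_d d/\diam$ fits the chain cleanly.

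The main obstacle is the polynomial dependence of the constants on $d$, and in particular the Buser step and the growth argument. The cited Buser inequality must be for vertex expansion with degree-polynomial constants. The choice of the ball radius needs care so that $|B_r|\le n/2$ while the boundary ratio stays $O(d/\diam)$. I would first try to pick $r\in[\diam/4,\diam/2]$ by averaging $\log|B_{r+1}|/|B_r|$ over that range. This gives a ratio of $O(\log n/\diam)$. To bring it down to $O(d/\diam)$, I would use the abelian growth bound $|B_{2r}|\le 2^{O(d)}|B_r|$, so that averaging over a dyadic range loses only $O(d)$.
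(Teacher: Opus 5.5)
Step (5) is wrong as written. The identity $|(A+s)\setminus A|=|(A-s)\setminus A|$ holds for each single $s$, but it does not give $|\partial_v^-A|\le|\partial_v^+A|$. The reason is that the sets $(A+s)\setminus A$ can overlap while the sets $(A-s)\setminus A$ are disjoint.

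Take $\Cay(\Z_5^2,\{e_1,e_2\})$ with $A=\{0,e_1,e_2\}$. Then $\partial_v^+A=\{2e_1,e_1+e_2,2e_2\}$ and $\partial_v^-A=\{-e_1,e_2-e_1,-e_2,e_1-e_2\}$, so $|\partial_vA|=7>2|\partial_v^+A|$. More generally, for the simplices $\{x:x_i\ge0,\ \sum_ix_i\le k\}$ in $\Z_m^d$, the ratio $|\partial_v^-A|/|\partial_v^+A|$ tends to $d$.

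What your identity does give is the union bound
\[
 |\partial_v^-A|\le\sum_{s\in S}|(A-s)\setminus A|=\sum_{s\in S}|(A+s)\setminus A|\le d\,|\partial_v^+A|.
\]
Hence $\iota_v(\Gamma)\le(d+1)\iota_v^+(\Gamma^+)$, which is Lemma~\ref{lem:in-out-comparison} with $D^-=d$. Since the statement is only up to $\Theta_d$, your cycle still closes with this constant.

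With that repair the proof is correct and uses the paper's ingredients, with two differences.

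\emph{The cycle.} You close a single cycle $h^+\lesssim c/n\lesssim\delta\lesssim\sqrt{\lambda_1}\lesssim\widehat h\lesssim h^+$. This makes the paper's character-band bound $\widehat h\le6\delta$ (Lemma~\ref{lem:character-band}) unnecessary. The paper includes it to get a $d$-free constant there; your cycle only yields $\widehat h=O(d^2)\delta$.

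\emph{The diameter lower bound.} You use the elementary estimate $\delta\ge1/(3\diam(\Gamma))$, which is correct, instead of the Babai--Szegedy bound $\widehat h\ge2/(2R+1)$ (Lemma~\ref{lem:babai-szegedy}). Your route is self-contained, but it must pass through the Buser inequality to reach $\widehat h$ and hence $c$, losing a factor of $d$. The paper feeds Babai--Szegedy directly into Theorem~\ref{thm:directed-lower} and obtains Corollary~\ref{cor:cop-diameter} without spectral input.

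Two smaller points. Your ball-growth upper bound is the paper's Proposition~\ref{prop:expansion-diameter}. Run the telescoping over $[t,2t]$ with $4t<\diam(\Gamma)$: disjointness of $B(2t)$ and $g+B(2t)$ for $\operatorname{dist}(0,g)=\diam(\Gamma)$ is what guarantees $|B(2t)|\le n/2$. Radius $\diam/2$ can fail, as in $\Cay(\Z_{2R},\{\pm1\})$ with $R$ even.

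Finally, the Klartag--Kozma--Ralli--Tetali inequality applies because $\Gamma$ is an abelian Cayley graph, not because of its degree. It gives $\lambda_1\le16h_E^2$ with an absolute constant, and $h_E\le\widehat d\,\iota_v(\Gamma)$ converts it to vertex expansion. This settles your concern about degree dependence in that step.
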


The spectral comparison uses the Buser inequality of
Klartag--Kozma--Ralli--Tetali \cite{KlartagKozmaRalliTetali}; the diameter
comparison uses the Babai--Szegedy bound stated in
\cite[Proposition~2.2]{Skresanov} and ball doubling
(Lemma~\ref{lem:ball-doubling}). The qualitative
expansion--diameter comparison also follows from
\cite[Proposition~4.7 and Corollary~1.6]{BreuillardTointon}.
The undirected case answers \cite[Question~5.5]{Biswas2026}.

The comparison constants above depend polynomially on $d$. In particular,
the ball-doubling argument of Lemma~\ref{lem:ball-doubling}, combined with
the elementary bound $\iota_v\left(\Gamma\right)\leq2$ of
\eqref{eq:hplus-at-most-two}, gives
$\iota_v\left(\Gamma\right)\leq24d/\diam\left(\Gamma\right)$
(Proposition~\ref{prop:expansion-diameter}).
For undirected abelian Cayley graphs of fixed degree, the quantities in
Theorem~\ref{thm:intro-characterization} are also comparable
to $\tw\left(\Gamma\right)+1$ and $\pw\left(\Gamma\right)+1$, where
$\tw\left(\Gamma\right)$ and $\pw\left(\Gamma\right)$ denote the treewidth
and pathwidth of $\Gamma$, respectively, as defined in
Section~\ref{sec:notation}. The comparison of the cop number with
treewidth already holds for every connected graph of bounded degree: the
two quantities agree up to a factor $\Delta+1$
\cite[Theorem~3.3]{AlonMehrabian2015}. Since $\tw\leq\pw$, this also
gives $c_{1,\infty}\left(\Gamma\right)\leq\pw\left(\Gamma\right)+1$.
What we add for undirected abelian Cayley graphs is the reverse estimate
$\pw\left(\Gamma\right)+1\leq3n\delta\left(\Gamma\right)$
(Proposition~\ref{prop:pathwidth-quotient}), obtained by ordering the
quotient residues linearly and retaining a fixed block of fibres in every
bag of the path decomposition; see Section~\ref{sec:width}.

\section{Definitions, notation, and preliminaries}
\label{sec:notation}

\subsection{General conventions}

We use $\Z_m=\Z/m\Z$ and $\F_p=\Z/p\Z$ for a prime $p$.
Write $e_i$ for the $i$th coordinate vector. We use $a\cdot b=\sum_i a_i b_i$ over the
coefficient ring in question and $\|z\|_1=\sum_i|z_i|$ for $z\in\R^d$.

For positive quantities $f$ and $g$, $f=\Theta_q\left(g\right)$ means that
$c_qg\leq f\leq C_qg$ for positive constants
$c_q,C_q$ depending only on the displayed parameter $q$, uniformly in
all other parameters.  Without a subscript the constants are absolute.
The notation $f=O_q\left(g\right)$ means $f\leq C_qg$, and $f=\Omega_q\left(g\right)$ means
$f\geq c_qg$, with the same convention on constants.

We use multiplicative notation for general groups and additive notation
for abelian groups, with identities $1$ and $0$, respectively.
Write $\langle S\rangle$ for the subgroup generated by $S$ and
$S^{-1}=\{s^{-1}:s\in S\}$, or $-S$ in additive notation.
For $K\leq G$, write $K\backslash G=\{Kx:x\in G\}$ for the right-coset
space; when $K\trianglelefteq G$, the quotient group is also denoted $G/K$.
A fibre of a homomorphism $\pi$ over $y$ is $\pi^{-1}(\{y\})$;
occupying a full fibre means placing a cop at each of its vertices.
We use the commutator convention $[u,v]=u^{-1}v^{-1}uv$ and write
$G/[G,G]$ for the abelianization.

For a finite abelian group $G$, an additive character is a homomorphism
$\chi:G\to\R/\Z$, and
$\widehat G=\operatorname{Hom}\left(G,\R/\Z\right)$ is
the group of these characters under pointwise addition. We write
$\normtor{x}$ for the distance of a real representative of
$x\in\R/\Z$ to the nearest integer.
The corresponding complex-valued character is
$g\mapsto\exp\left(2\pi i\chi\left(g\right)\right)$.

\subsection{Graphs and the pursuit game}
\label{sec:graphs-game}

We write $\Gamma$ for an undirected graph and $\Gamma^+$ for a digraph, with
indexed versions such as $\Gamma_j$ and $\Gamma_j^+$ for families.
For a digraph $\Gamma^+$, its underlying undirected graph $U(\Gamma^+)$ is
the simple graph in which distinct vertices are adjacent whenever at least
one of the two possible arcs joins them in $\Gamma^+$. When a digraph and
its underlying undirected graph occur together, we write
$\Gamma=U(\Gamma^+)$.

All digraphs in this paper are finite and loopless, with no parallel arcs,
unless explicitly stated otherwise. Write $V\left(\Gamma^+\right)$ for the vertex set,
$n=|V\left(\Gamma^+\right)|$ for its order, and $u\to v$ for an arc from $u$ to $v$.
The out-degree $d^+\left(v\right)$ counts arcs leaving $v$, and the in-degree $d^-\left(v\right)$
counts arcs entering $v$; $D^+=\max_v d^+\left(v\right)$ and
$D^-=\max_v d^-\left(v\right)$.  In an undirected graph, $\Delta$ denotes the maximum
degree.  A digraph is strongly connected if every ordered pair of vertices
is joined by a directed path.  It is Eulerian here if
$d^+\left(v\right)=d^-\left(v\right)$ at every vertex.  It is vertex-transitive if, for every
pair of vertices, an automorphism sends the first to the second.
The induced digraph $\Gamma^+-X$ is obtained by deleting the vertices of $X$
and all incident arcs.  The reverse digraph $\left(\Gamma^+\right)^{\mathrm{rev}}$ is
obtained by reversing every arc.
An undirected graph is
identified with the symmetric digraph obtained by replacing every edge with
the two corresponding arcs.  Players are permitted to stay at their current
vertices.  Multiple cops may occupy the same vertex.  The cops know the
robber's current vertex at every stage.  A directed path has no repeated vertices; a walk may
have repeated vertices.  Any finite directed walk can be reduced to a path
with the same endpoints by deleting closed subwalks.  We call a digraph
oriented if no pair of vertices is joined in both directions.

In the $\left(1,\infty\right)$ game, the cops choose their initial vertices, after which
the robber chooses its initial vertex.  In each round all cops first stay put
or traverse one arc.  If a cop then occupies the robber's vertex, the robber is
captured.  Otherwise the robber may stay put or traverse any finite directed
path containing no vertex currently occupied by a cop.  The least number of
cops having a winning strategy is denoted by $c_{1,\infty}\left(\Gamma^+\right)$.
For an integer $s\geq1$, $c_{1,s}\left(\Gamma^+\right)$ uses the same rules with the
robber's path restricted to length at most $s$.  Path length counts arcs
(or edges in an undirected graph), and a cop-free path avoids every vertex
currently occupied by a cop, including its endpoints.

An undirected graph is connected if every pair of vertices is joined by a
path. In a connected undirected graph $\Gamma$,
$\operatorname{dist}_{\Gamma}\left(u,v\right)$ is the
minimum length of a path from $u$ to $v$, and
$\diam\left(\Gamma\right)=\max_{u,v}\operatorname{dist}_{\Gamma}\left(u,v\right)$.
The radius-$r$ ball about $v$ is the set of vertices at distance at most
$r$ from $v$.  The cycle on $m$ vertices is denoted by $C_m$.
A doubling constant for a graph is a constant $C$ such that every integer-radius
ball satisfies $|B(2r)|\leq C|B(r)|$ for $r\geq1$.
The Cartesian product $\Gamma_1\square\Gamma_2$ has vertex set $V\left(\Gamma_1\right)\times V\left(\Gamma_2\right)$;
two vertices are adjacent when one coordinate agrees and the other
coordinates are adjacent in the corresponding factor.
Thus $C_m^{\square k}$ is the Cartesian product of $k$ copies of $C_m$.

A tree decomposition \cite{RobertsonSeymour1986} of an undirected graph $\Gamma$ consists of a finite tree
$\mathcal T$ and a set $X_t\subseteq V(\Gamma)$ for each node $t\in V(\mathcal T)$.
Each set $X_t$ is called a bag. The bags satisfy the following conditions:
\begin{enumerate}[label=\textup{(\roman*)}]
\item Every vertex of $\Gamma$ belongs to at least one bag.
\item The endpoints of every edge of $\Gamma$ belong together to some bag.
\item For each vertex $v\in V(\Gamma)$, the nodes $t$ whose bags contain $v$
form a connected subtree of $\mathcal T$.
\end{enumerate}
The width of a
tree decomposition is $\max_t|X_t|-1$, and the minimum such width is the
treewidth $\tw(\Gamma)$. A path decomposition is a tree decomposition whose
indexing tree is a path; its minimum width is the pathwidth $\pw(\Gamma)$. Thus
$\tw(\Gamma)\leq\pw(\Gamma)$.

\subsection{Cayley and Schreier digraphs}
\label{sec:cayley-notation}

For a finite group $G$ and a set $S\subseteq G\setminus\{1\}$, the
right-Cayley digraph $\Cay\left(G,S\right)$ has vertex set $G$ and an arc
$g\to gs$ for each $g\in G$ and $s\in S$.  The elements of $S$ are called
generators. Since $G$ is finite, this digraph is strongly connected exactly when $S$ generates $G$.
Its underlying undirected graph is
$\Cay\left(G,(S\cup S^{-1})\setminus\{1\}\right)$.
A cyclic
quotient is a surjective homomorphism from a group onto a cyclic group.
A cyclic sweep \cite[Proposition~4.6]{Biswas2026} moves cops around
the quotient cycle while placing cops on every
vertex in the fibres over selected cycle positions.

For a subgroup $K\leq G$, the right-coset Schreier digraph
$\Sch\left(K\backslash G,S\right)$ has vertex set $K\backslash G$ and an
arc $Kx\to Kxs$ for every $x\in G$ and $s\in S$. Loops and parallel arcs
may be suppressed, since they do not change legal moves; $K$ need not be
normal.

For a strongly connected abelian Cayley digraph $\Gamma^+=\Cay\left(G,S\right)$
with $G\neq\{0\}$, we define its character-displacement parameter by
\begin{equation}\label{eq:def-delta}
 \delta\left(\Gamma^+\right):=
 \min_{\substack{\chi\in\widehat G\\ \chi\neq0}}
 \max_{s\in S}\normtor{\chi\left(s\right)}.
\end{equation}
The finite nontrivial abelian group $G$ has a nonempty finite set of
nontrivial characters, so the minimum is attained.  Each such character
is nonzero on some element of $S$, since $S$ generates $G$; hence this
parameter is positive. The same formula defines $\delta\left(\Gamma\right)$
for a connected undirected abelian Cayley graph $\Gamma=\Cay\left(G,S\right)$
with $S=-S$.

For a strongly connected abelian Cayley digraph $\Gamma^+=\Cay(G,S)$, we write
\[
 T:=S\cup(-S),\qquad
 \Gamma:=U(\Gamma^+)=\Cay(G,T),\qquad
 \widehat d:=|T|
\]
for the symmetric generating set, the underlying undirected graph, and its
degree. Every character $\chi$ satisfies
$\normtor{\chi(-s)}=\normtor{\chi(s)}$, and hence
$\delta\left(\Gamma\right)=\delta\left(\Gamma^+\right)$.
When convenient, we abbreviate $h^+:=\iota_v^+(\Gamma^+)$ and
$\widehat h:=\iota_v(\Gamma)$; these isoperimetric numbers are defined in
\eqref{eq:def-hplus} below.

For $a\in\Z_m$, define its cyclic displacement by
\[
 |a|_m:=\min_{j\in\Z}|\widetilde a+jm|,
\]
where $\widetilde a$ is any integer representative of $a$.  For
$A\subseteq\Z_m$, put $Q(A):=\max_{a\in A}|a|_m$; for a homomorphism
$\pi:G\to\Z_m$, put $Q_\pi(S):=\max_{s\in S}|\pi(s)|_m$.

\subsection{Neighbourhoods, boundaries, and reachability}

For $A\subseteq V\left(\Gamma^+\right)$, define
\[
 N^+\left(A\right):=\{v:\text{there are }u\in A\text{ and an arc }u\to v\},
 \qquad N^+[A]:=A\cup N^+\left(A\right),
\]
and
\[
 \partial_v^+A:=N^+\left(A\right)\setminus A.
\]
Define also
\[
 N^-\left(A\right):=\{v:\text{there are }u\in A\text{ and an arc }v\to u\},
 \qquad
 \partial_v^-A:=N^-\left(A\right)\setminus A.
\]
The out-vertex-isoperimetric number is
\begin{equation}\label{eq:def-hplus}
 \iota_v^+\left(\Gamma^+\right):=
 \min_{0<|A|\leq |V\left(\Gamma^+\right)|/2}
 \frac{|\partial_v^+A|}{|A|}.
\end{equation}
For an undirected graph $\Gamma$, we use $\partial_v A=N\left(A\right)\setminus A$ and
write $\iota_v\left(\Gamma\right)$ for the same minimum with $\partial_v A$ in place of
$\partial_v^+A$.
Here $N\left(A\right)$ consists of the vertices adjacent to at least one vertex of $A$.
For disjoint vertex sets $A,B$, $E\left(A,B\right)$ is the set of undirected edges
with one endpoint in each set; each such edge is counted once.
For vertex sets $X,Y$ in a digraph, $e(X,Y)$ denotes the number of arcs with
tail in $X$ and head in $Y$.
An out-closed set has no arc to its complement in the ambient digraph.
The in-vertex-isoperimetric number is
\[
 \iota_v^-\left(\Gamma^+\right):=
 \iota_v^+\left(\left(\Gamma^+\right)^{\mathrm{rev}}\right).
\]
If $\Gamma^+$ is strongly connected, then $\iota_v^+\left(\Gamma^+\right)>0$.  Moreover, if
$n\geq3$, choosing any set of $\lfloor n/2\rfloor$ vertices gives
\begin{equation}\label{eq:hplus-at-most-two}
\iota_v^+\left(\Gamma^+\right)\leq
\frac{\lceil n/2\rceil}{\lfloor n/2\rfloor}\leq2.
\end{equation}
More precisely,
\[
 \frac{\lceil n/2\rceil}{\lfloor n/2\rfloor}
 =\begin{cases}
  1,&n\text{ even},\\[2pt]
  1+\dfrac{2}{n-1},&n\text{ odd}.
 \end{cases}
\]
Thus the bound $2$ is a uniform estimate for all $n\geq3$, attained by
this ratio only at $n=3$; the ratio tends to $1$ as $n\to\infty$.
For a set $A$ of size $\lfloor n/2\rfloor$, the first inequality in
\eqref{eq:hplus-at-most-two} follows from
$\partial_v^+A\subseteq V(\Gamma^+)\setminus A$, whose size is $\lceil n/2\rceil$.
Expansion bounded away from zero means that the relevant isoperimetric
numbers have a common positive lower bound. A directed expander family here
is a family with bounded maximum out-degree and out-vertex-isoperimetric
numbers bounded away from zero.

For $X\subseteq V\left(\Gamma^+\right)$ and $x\notin X$, let
\[
 R\left(x,X\right):=\{y:\text{$y$ is reachable from $x$ by a directed path in
 $\Gamma^+-X$}\}.
\]
The path of length zero is allowed, so $x\in R\left(x,X\right)$.  This is the set of possible destinations of an unbounded-speed robber
at $x$ when the cops occupy $X$; see Figure~\ref{fig:reachable-set}.

\begin{lemma}[Out-closure]\label{lem:out-closure}
For every $X\subseteq V\left(\Gamma^+\right)$ and $x\notin X$,
\[
 \partial_v^+R\left(x,X\right)\subseteq X.
\]
More generally, an arbitrary union of sets of the form $R\left(x,X\right)$ is out-closed
in $\Gamma^+-X$ and consequently has out-boundary contained in $X$.
\end{lemma}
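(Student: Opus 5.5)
The plan is to argue directly from the definitions of $R(x,X)$ and $\partial_v^+$, proving the general statement about unions first; the first claim is then the special case of a single set.

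Let $\mathcal R$ be an arbitrary union of sets $R(x_i,X)$ with $x_i\notin X$. Each $R(x_i,X)$ consists of vertices reachable by directed paths in $\Gamma^+-X$, so it lies in $V(\Gamma^+)\setminus X$, and hence $\mathcal R\subseteq V(\Gamma^+)\setminus X$.

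To show that $\mathcal R$ is out-closed in $\Gamma^+-X$, take an arc $u\to v$ of $\Gamma^+-X$ with $u\in\mathcal R$. Then $u\in R(x_i,X)$ for some $i$, so there is a directed path $P$ from $x_i$ to $u$ in $\Gamma^+-X$. Appending the arc $u\to v$ gives a directed walk from $x_i$ to $v$ in $\Gamma^+-X$; here we use that $v\notin X$, which holds because the arc lies in $\Gamma^+-X$. By the convention in Section~\ref{sec:graphs-game}, this walk reduces to a directed path with the same endpoints inside $\Gamma^+-X$. Hence $v\in R(x_i,X)\subseteq\mathcal R$. The only point needing care is this passage from walk to path, since $v$ might already lie on $P$; deleting closed subwalks handles it.

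For the boundary inclusion, let $w\in\partial_v^+\mathcal R$. Then $w\notin\mathcal R$ and there is an arc $u\to w$ in $\Gamma^+$ with $u\in\mathcal R$. Suppose, for contradiction, that $w\notin X$. Since $u\notin X$ as well, the arc $u\to w$ survives in $\Gamma^+-X$. Out-closure then gives $w\in\mathcal R$, a contradiction. Therefore $w\in X$, and so $\partial_v^+\mathcal R\subseteq X$.

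Taking the union to consist of the single set $R(x,X)$ recovers the first displayed inclusion.
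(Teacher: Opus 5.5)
Your proof is correct and takes the same route as the paper's: append the arc $u\to w$ with $w\notin X$ to an $x$--$u$ path in $\Gamma^+-X$, reduce the resulting walk to a path by deleting closed subwalks, and conclude $w\in R(x,X)$, treating unions in the same way. The only difference is that you prove the union case first and write out the deduction of the boundary inclusion from out-closure explicitly; the argument is otherwise the same.
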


\begin{proof}
Let $u\in R\left(x,X\right)$ and suppose $u\to w$ is an arc with $w\notin X$.  Appending
this arc to an $x$--$u$ path in $\Gamma^+-X$, and deleting any closed
subwalk, shows that
$w\in R\left(x,X\right)$.  The same argument applies when $u$ belongs to a union of
reachable sets.
\end{proof}

\subsection{Spectral and edge-isoperimetric notation}

For an undirected graph $\Gamma$, let $A_{\Gamma}$ be its adjacency matrix, with entry
$1$ at adjacent vertex pairs and $0$ otherwise, and let $D_{\Gamma}$ be the diagonal
matrix of vertex degrees. Its unnormalized Laplacian is
$L_{\Gamma}:=D_{\Gamma}-A_{\Gamma}$. For connected $\Gamma$ with at least two vertices, write
$\lambda_1(\Gamma)$ for the smallest positive eigenvalue of $L_{\Gamma}$, also called
the unnormalized spectral gap.

The unnormalized edge-isoperimetric number of $\Gamma$ is
\[
 h_E(\Gamma):=\min_{0<|A|\leq |V(\Gamma)|/2}
 \frac{|E(A,V(\Gamma)\setminus A)|}{|A|},
\]
where crossing edges are counted once.

\subsection{A classical expansion estimate}

We shall use the following classical estimate to obtain vertex-expansion
lower bounds from diameter. It applies to both abelian and nonabelian
Cayley graphs, since every Cayley graph is vertex-transitive.

\begin{lemma}[Babai--Szegedy local expansion]
\label{lem:babai-szegedy}
Let $\Gamma$ be a finite connected undirected vertex-transitive graph of order $N$
with $N\geq2$ and diameter $R$.  For every nonempty $A\subseteq V\left(\Gamma\right)$,
\begin{equation}\label{eq:babai-szegedy-full}
 \frac{|\partial_v A|}{|A|}
 \geq
 \frac{2\left(1-|A|/N\right)}{R+|A|/N}.
\end{equation}
In particular,
\begin{equation}\label{eq:babai-szegedy-half}
 \iota_v\left(\Gamma\right)\geq\frac{2}{2R+1}.
\end{equation}
\end{lemma}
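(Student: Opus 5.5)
The plan follows the classical Babai--Szegedy averaging argument. Fix a nonempty $A\subseteq V(\Gamma)$ with $|A|<N$; if $A=V(\Gamma)$ the right side of \eqref{eq:babai-szegedy-full} is $0$ and there is nothing to prove. Let $G=\operatorname{Aut}(\Gamma)$, which acts transitively. For each ordered pair $(a,b)\in A\times(V\setminus A)$ choose a shortest path $P_{a,b}$ from $a$ to $b$. This path has length at most $R$ and meets $\partial_vA$: its first vertex outside $A$ lies in $N(A)\setminus A$. We then average over the images $gP_{u,v}$, $g\in G$, of a fixed family of geodesics $\{P_{u,v}\}$ (one for each ordered pair of vertices). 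Transitivity makes every vertex appear as an internal or end vertex of these paths equally often.

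The first step is the counting set-up. Consider the multiset $\mathcal P=\{gP_{u,v}:g\in G,\ u,v\in V\}$ of $|G|N^2$ geodesics. By transitivity each vertex $w$ lies on exactly $|G|\sum_{u,v}|P_{u,v}|/N$ of them, where $|P|$ counts vertices. This is at most $|G|N(R+1)$. The average is the useful quantity; we will refine it below. Each path in $\mathcal P$ going from $A$ to $V\setminus A$ passes through some vertex of $\partial_vA$. The number of such paths equals $|G|\,|A|(N-|A|)$, because the pair $(gu,gv)$ is uniformly distributed over $V\times V$ as $g,u,v$ vary. Hence
\[
|G|\,|A|(N-|A|)\le |\partial_vA|\cdot\max_w\#\{P\in\mathcal P: w\in P\}.
\]
This crude form gives $|\partial_vA|/|A|\ge (1-|A|/N)/(R+1)$, which is off by the factor $2$ and the $|A|/N$ term.

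The second step sharpens this to the stated constant, and I expect it to be the main obstacle. The paths from $A$ to $V\setminus A$ should be charged to boundary vertices in a more careful way, in the spirit of Babai--Szegedy. For a path leaving $A$, count its \emph{first} boundary vertex. Add to these the symmetric contribution of paths from $V\setminus A$ to $A$, charged to the \emph{last} boundary vertex before entry. Each of these families has size $|G|\,|A|(N-|A|)$, so together they total $2|G|\,|A|(N-|A|)$. A fixed vertex $w\in\partial_vA$ can be charged by a path $gP_{u,v}$ only when $w$ sits at a position of the path constrained by membership in $A$ on one side. The intended accounting gives at most $|G|(RN+|A|)$ for the number of paths charging $w$ in the two families combined. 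The idea is that the positions along a geodesic contribute about $R$ per endpoint pair, and the extra $|A|$ term comes from paths in which $w$ is itself an endpoint adjacent to $A$. Dividing then gives
\[
\frac{|\partial_vA|}{|A|}\ge\frac{2(N-|A|)}{RN+|A|}=\frac{2(1-|A|/N)}{R+|A|/N}.
\]
If this exact bookkeeping proves delicate, I would instead cite the statement as in \cite[Proposition~2.2]{Skresanov}, which the paper already invokes for this purpose.

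Finally, \eqref{eq:babai-szegedy-half} follows by substitution. The right-hand side of \eqref{eq:babai-szegedy-full} is decreasing in $t=|A|/N$, so for $|A|\le N/2$ it is at least its value at $t=1/2$, namely $2(1/2)/(R+1/2)=2/(2R+1)$. Minimizing over admissible $A$ then gives the bound on $\iota_v(\Gamma)$.
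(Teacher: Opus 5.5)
Your deduction of \eqref{eq:babai-szegedy-half} from \eqref{eq:babai-szegedy-full} (the right-hand side decreases in $t=|A|/N$, then evaluate at $t=1/2$) matches the paper's specialization. Your fallback for \eqref{eq:babai-szegedy-full} is in fact the paper's entire proof: it quotes \cite[Theorem~2.2]{BabaiSzegedy} in the quantitative form of \cite[Proposition~2.2]{Skresanov}. Your primary route, a self-contained geodesic-averaging argument, is different. As written, its decisive step is missing: you only assert the bound $|G|(RN+|A|)$ on the number of triples $(g,u,v)$ charging a fixed $w\in\partial_vA$. Your heuristic for the $|A|$ term does not explain why two families of total size $2|G||A|(N-|A|)$ do not simply double the per-vertex budget. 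Without that, you only get your crude bound $(1-|A|/N)/(R+1)$.

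The count does go through, using two observations your sketch does not state. First, a triple lies in at most one family (according to whether $gu\in A$), charges exactly one vertex, and has $u\neq v$. So the two families share the single budget of triples whose path passes through $w$. Second, write $P_{u,v}=(p_0,\dots,p_\ell)$ with $\ell=\ell(u,v)\le R$. A family-1 charge sits at a position in $\{1,\dots,\ell\}$ and a family-2 charge at a position in $\{0,\dots,\ell-1\}$. Hence position $\ell$ is charged only by family~1, which forces $gu\in A$. For each $i$, exactly $|G|/N$ elements $g$ satisfy $gp_i=w$, so
\[
 \#\{(g,u,v)\text{ charging }w\}
 \le\sum_{u\neq v}\Bigl(\ell(u,v)\frac{|G|}{N}+\#\{g:gv=w,\ gu\in A\}\Bigr)
 \le|G|\bigl((N-1)R+|A|\bigr).
\]
The last step holds because, for each of the $|G|$ pairs $(v,g)$ with $gv=w$, exactly $|A|$ vertices $u$ satisfy $gu\in A$, and all of them differ from $v$ since $w\notin A$. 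Hence $2|A|(N-|A|)\le|\partial_vA|\bigl((N-1)R+|A|\bigr)$, which implies \eqref{eq:babai-szegedy-full}. With this written out, your route makes the lemma self-contained and even replaces $R$ by $R(1-1/N)$ in the denominator. The paper instead avoids this counting by citation.
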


\begin{proof}
Babai--Szegedy proved the vertex-transitive local-expansion theorem in
\cite[Theorem~2.2]{BabaiSzegedy}.  Formula
\eqref{eq:babai-szegedy-full}, and hence its specialization
\eqref{eq:babai-szegedy-half} for $|A|\leq N/2$, is the quantitative form
stated in \cite[Proposition~2.2]{Skresanov}.
\end{proof}

\section{A directed isoperimetric lower bound}\label{sec:directed-lower}

\begin{figure}[tbp]
\centering

\begin{tikzpicture}[x=1.1cm,y=1cm]
\filldraw[paper panel,fill=figTeal!5,draw=figTeal!25] (-.5,-1.5) rectangle (3.9,2.15);
\node[paper reach] (r) at (0,0) {$r$};
\node[paper reach] (a) at (1.7,1) {$a$};
\node[paper reach] (b) at (1.7,-1) {$b$};
\node[paper reach] (c) at (3.4,0) {$c$};
\node[paper cop] (x) at (5.3,.7) {$x_1$};
\node[paper cop] (y) at (5.3,-1) {$x_2$};
\node[paper vertex] (u) at (7.3,0) {$u$};
\node[paper vertex] (v) at (9.3,0) {$v$};
\foreach \a/\b in {r/a,r/b,a/c,b/c,u/v,x/u,y/u}
 \draw[paper arc] (\a) -- (\b);
\draw[paper arc] (c.north) .. controls +(0,2.1) and +(0,2.1) .. (r.north);
\draw[paper arc] (v.north) .. controls +(0,2.7) and +(0,2.7) .. (x.north);
\draw[paper arc,figOrange] (c) -- (x);
\draw[paper arc,figOrange] (b) -- (y);
\draw[paper arc,figBlue] (u.south) .. controls +(0,-1.8) and +(0,-1.4) .. (b.south);
\node[paper label,text=figTeal] at (1.7,-2.85) {$R\left(r,X\right)$};
\node[paper label,text=figOrange!80!black] at (5.3,-2.85) {$X=\{x_1,x_2\}$};
\node[paper label] at (8.5,-2.85) {outside $R\left(r,X\right)$};
\end{tikzpicture}

\caption{Forward reachability in $\Gamma^+-X$. The teal set $R(r,X)$ has out-boundary contained in the orange set $X=\{x_1,x_2\}$. The blue arc enters $R(r,X)$ from outside.}
\label{fig:reachable-set}
\end{figure}

We adapt Mehrabian's large-component strategy
\cite[Lemma~3.1 and Theorem~3.3]{Mehrabian2012}. For an undirected graph,
his proof has two steps. In the pursuit step \cite[Lemma~3.1]{Mehrabian2012},
the cops occupy a set $S$ and the robber stays in a connected component of
$\Gamma-N[S]$ with more than $n/2$ vertices, where $N[S]$ is the closed
neighbourhood of $S$. Such a component is unique.
After the cops move, the robber walks to a vertex in the intersection of the
old and new large components. In the isoperimetric step
\cite[Theorem~3.3]{Mehrabian2012}, a set $S$ with no such component is
considered. A subset-sum argument then selects components whose union has
between $n/4$ and $n/2$ vertices and whose vertex boundary lies in
$N[S]\setminus S$.

Neither step transfers verbatim to digraphs. Forward reachable sets of
distinct vertices may overlap, a large reachable set need not be unique, and
a vertex common to two large reachable sets need not be reachable from the
robber's position. Moreover, the deleted set is the closed out-neighbourhood
$N^+[C]$ of the cop positions $C$. A vertex outside $N^+[C]$ may have an
out-neighbour in $C$, so the out-boundary of a union of reachable sets is
contained in $N^+[C]$ but not necessarily in $N^+[C]\setminus C$. We
therefore make the following adaptation. Proposition~\ref{prop:escape} replaces the
isoperimetric step. It finds a vertex with a large reachable set inside a
prescribed set $Z$, using a greedy union of overlapping reachable sets in
place of the selection of components. In the pursuit step of
Theorem~\ref{thm:directed-lower}, the intersection argument is replaced by
the monotonicity of reachable sets: the robber's next vertex is chosen
inside the set that it can reach at that moment. Bounding the out-boundary
by all of $N^+[C]$ costs the factor $D^++1$ in place of Mehrabian's $\Delta$;
see Remark~\ref{rem:undirected-comparison}.

\begin{proposition}[Escape]\label{prop:escape}
Let $\Gamma^+$ be a strongly connected digraph on $n\geq3$ vertices and put
$h:=\iota_v^+\left(\Gamma^+\right)$.
If $B\subseteq V\left(\Gamma^+\right)$ satisfies $|B|\leq nh/4$ and
$Z\subseteq V\left(\Gamma^+\right)$ satisfies $|Z|>n/2$, then there is a vertex
$v\in Z\setminus B$ for which
\[
 |R\left(v,B\right)|>n/2.
\]
\end{proposition}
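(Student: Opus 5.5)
The plan is to argue by contradiction: assume every $v\in Z\setminus B$ has $|R(v,B)|\leq n/2$. The engine is one isoperimetric observation. If $U$ is any nonempty union of sets $R(v,B)$ with $v\notin B$, then Lemma~\ref{lem:out-closure} gives $\partial_v^+U\subseteq B$. So if in addition $|U|\leq n/2$, the definition \eqref{eq:def-hplus} gives
\[
 h|U|\leq|\partial_v^+U|\leq|B\setminus U|\leq|B|\leq nh/4,
\]
and therefore $|U|\leq n/4$. In words, a union of reachable sets cannot have size strictly between $n/4$ and $n/2$, and a single reachable set of size at most $n/2$ has size at most $n/4$.

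Next I run a greedy union. Enumerate $Z\setminus B=\{v_1,\dots,v_k\}$ and put $U_j:=\bigcup_{i\leq j}R(v_i,B)$. I claim by induction that $|U_j|\leq n/4$ for all $j$. The base case $U_0=\emptyset$ is trivial. For the step, the contradiction hypothesis gives $|R(v_{j+1},B)|\leq n/2$, so the observation gives $|R(v_{j+1},B)|\leq n/4$. Hence $|U_{j+1}|\leq|U_j|+n/4\leq n/2$, and applying the observation to $U_{j+1}$ gives $|U_{j+1}|\leq n/4$. Here the overlapping of reachable sets, which blocks Mehrabian's component selection, is harmless: we only ever use subadditivity of sizes and the out-closure of unions.

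For the final count, set $U:=U_k\supseteq Z\setminus B$. Then $Z\subseteq U\cup B$, and, keeping the sharper form $|U|\leq|B\setminus U|/h$, we get
\[
 |Z|\leq|U|+|B\setminus U|+|B\cap U|.
\]
If $h\leq1$, the crude bound $|Z|\leq|B|/h+|B|\leq nh/4+n/4\leq n/2$ already contradicts $|Z|>n/2$.

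The main obstacle is the case $h>1$. By \eqref{eq:hplus-at-most-two} this forces $n$ to be odd with $h\leq1+2/(n-1)$, and there the crude bound overshoots $n/2$ by less than one. I would first try integrality. Since $|Z|>n/2$ and $n$ is odd, $|Z|\geq(n+1)/2$. Also $|B|\leq\lfloor nh/4\rfloor$ and $|U|\leq\lfloor|B\setminus U|/h\rfloor<|B\setminus U|$ whenever $U\neq\emptyset$. These should give $|U|+|B|\leq(n-1)/2$. If that fails, I would fall back on directly checking the small cases, such as $n=3$, using the explicit upper bound on $h$.
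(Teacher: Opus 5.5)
Your reduction is sound and is essentially the paper's argument. Every reachable set, and every greedy union $U$ of them, satisfies $h|U|\le|B|$. So the final union has $|U|\le\lfloor |B|/h\rfloor$, and everything comes down to the integer inequality $|B|+\lfloor |B|/h\rfloor\le\lfloor n/2\rfloor$ (the paper's $b+q\le m$). Your induction with the real threshold $n/4$ is a slightly tidier way to run the greedy step than the paper's ``first partial union exceeding $q$'': it removes the need for the auxiliary inequality $2q\le m$. Your case $h\le1$ is complete. In passing, note that $R(v,B)\subseteq V\setminus B$, so $B\setminus U=B$ and your ``sharper form'' gains nothing.

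The gap is the case $h>1$. This is exactly where the paper's proof does its work, and you leave it at ``these should give'' with a fallback to small cases. That fallback cannot succeed, because $h>1$ occurs for every odd $n$: the complete digraph on $2m+1$ vertices has $h=(m+1)/m$. Your integrality route does work, but the floor computation has to be carried out. Write $n=2m+1$ and $b=|B|$. If $b=0$, then $|U|=0$ and $|Z|\le|U|+b=0$, which is absurd. For $b\ge1$, integrality of $b$ and $h\le(m+1)/m$ give
\[
 b\le\left\lfloor\frac{(2m+1)(m+1)}{4m}\right\rfloor
 =\left\lfloor\frac m2+\frac34+\frac1{4m}\right\rfloor
 =\left\lceil\frac m2\right\rceil .
\]
Since $h>1$, you also get $|U|\le\lfloor b/h\rfloor\le b-1$. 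Because $Z\subseteq U\cup B$,
\[
 m+1\le|Z|\le|U|+b\le2\left\lceil\frac m2\right\rceil-1\le m,
\]
which is the required contradiction. The paper reaches the same bound by first showing that $h=(m+1)/m$ exactly, and then that $q=\lfloor b/h\rfloor=b-1$.
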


\begin{proof}
We put $m=\lfloor n/2\rfloor$ and $b=|B|$. The case $b=0$ follows from
strong connectivity, so we assume $b>0$. Since $h>0$, we may define
$q=\lfloor b/h\rfloor$. We first show that
\begin{equation}\label{eq:escape-integer-bounds}
 b+q\leq m,\qquad 2q\leq m.
\end{equation}
If $h\leq1$, then $b\leq q$ and
$q\leq\lfloor n/4\rfloor$, which gives both inequalities.

Suppose now that $h>1$. By \eqref{eq:hplus-at-most-two} and the case
formula following it, $h\leq1$ when $n$ is even; hence $n$ is odd,
so $n=2m+1$. For every nonempty $A$ with $|A|\leq m$, the definition
\eqref{eq:def-hplus} of $h$ gives $|\partial_v^+A|\geq h|A|>|A|$, and
integrality gives $|\partial_v^+A|\geq|A|+1$. Hence its boundary ratio is at least
$(m+1)/m$, while any set of size $m$ has ratio at most $(m+1)/m$.
Thus $h=(m+1)/m$, and
\[
 b\leq\left\lfloor\frac{(2m+1)(m+1)}{4m}\right\rfloor
 =\left\lceil\frac m2\right\rceil\leq m,
 \qquad
 q=\left\lfloor\frac{bm}{m+1}\right\rfloor=b-1.
\]
The floor identity follows by considering even and odd $m$ separately.
Consequently $b+q=2b-1\leq m$ and $2q=2b-2\leq m$, proving
\eqref{eq:escape-integer-bounds}.

We suppose, for a contradiction, that $|R(v,B)|\leq m$ for every
$v\in Z\setminus B$. This set of starting vertices is nonempty because
$b\leq m<|Z|$. By Lemma~\ref{lem:out-closure}, each reachable set has
out-boundary contained in $B$. Since $0<|R(v,B)|\leq m$, the set $R(v,B)$
is admissible in the definition \eqref{eq:def-hplus} of $h$, so
\[
 h|R(v,B)|\leq|\partial_v^+R(v,B)|\leq b.
\]
It follows that $|R(v,B)|\leq q$. If $q=0$, this contradicts the fact that
$v\in R(v,B)$. Otherwise, we enumerate these reachable sets as
$R_1,\ldots,R_t$. Their union contains $Z\setminus B$ and therefore has
size at least $m+1-b>q$, by \eqref{eq:escape-integer-bounds}.
We take $W$ to be the first partial union whose size exceeds $q$.
The preceding union and the newly added set each have at most $q$ vertices,
so
\[
 q<|W|\leq2q\leq m.
\]
By the second part of Lemma~\ref{lem:out-closure}, the union $W$ is
out-closed in $\Gamma^+-B$, and hence $\partial_v^+W\subseteq B$. Since
$0<|W|\leq m$, the set $W$ is admissible in the definition
\eqref{eq:def-hplus} of $h$, so $h|W|\leq|\partial_v^+W|\leq b$. Hence
$|W|\leq\lfloor b/h\rfloor=q$, a contradiction.
\end{proof}

\begin{theorem}[Directed isoperimetric lower bound]\label{thm:directed-lower}
Let $\Gamma^+$ be a strongly connected digraph on $n\geq3$ vertices.  Write
$D^+$ for its maximum out-degree and $h=\iota_v^+\left(\Gamma^+\right)$.  Then
\begin{equation}\label{eq:refined-directed-lower}
 c_{1,\infty}\left(\Gamma^+\right)>
 \frac{nh}{4\left(D^++1\right)}.
\end{equation}
\end{theorem}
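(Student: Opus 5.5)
We exhibit a robber strategy against any $k\leq nh/(4(D^++1))$ cops. The robber maintains the invariant that, with the cops on a multiset of positions $C$ (say at the start of a round, before the cops move), the robber sits at a vertex $r\notin N^+[C]$ with $|R(r,N^+[C])|>n/2$. Put $B:=N^+[C]$. Then $|B|\leq k(D^++1)\leq nh/4$, so Proposition~\ref{prop:escape} applies to $B$.

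\emph{Initial placement.} The cops choose $C_0$. Apply Proposition~\ref{prop:escape} with $B=N^+[C_0]$ and $Z=V(\Gamma^+)$, which has $|Z|=n>n/2$. This gives a vertex $v\notin B$ with $|R(v,B)|>n/2$, and the robber starts there. Note that $v\notin N^+[C_0]$, so no cop can capture it on the next cop move.

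\emph{Pursuit step.} Suppose the invariant holds with cops at $C$, robber at $r$, and $Z:=R(r,N^+[C])$ of size $>n/2$.
\begin{enumerate}[label=\textup{(\arabic*)}]
\item The cops move to new positions $C'$ with $C'\subseteq N^+[C]$. Since $r\notin N^+[C]$, the robber is not captured.
\item Apply Proposition~\ref{prop:escape} with $B'=N^+[C']$ and the same $Z$. It yields $v\in Z\setminus N^+[C']$ with $|R(v,N^+[C'])|>n/2$.
\item The robber moves from $r$ to $v$. This is legal: $v\in Z$ means some directed path from $r$ to $v$ avoids $N^+[C]\supseteq C'$, so it is cop-free for the current cop positions $C'$.
\end{enumerate}
This is the monotonicity replacement for Mehrabian's intersection argument: the target is chosen inside what the robber can reach right now. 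After the move the invariant holds for $C'$ and $v$, so the robber is never captured. Hence $k$ cops lose, and $c_{1,\infty}>nh/(4(D^++1))$, the strict inequality coming from the fact that every $k$ up to this bound fails.

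\emph{Main obstacle.} The delicate point is checking that reachability is taken in the right deleted set.
\begin{itemize}
\item We need $Z$ defined with respect to $N^+[C]$, not $C'$ alone, because the cops' new positions are unknown when $Z$ is fixed. Since $C'\subseteq N^+[C]$, paths avoiding $N^+[C]$ avoid $C'$ as well.
\item We need $v\notin N^+[C']$ so that the next cop move cannot land on $v$.
\end{itemize}
Both requirements are supplied by the choice $B=N^+[\cdot]$, and $|N^+[C]|\leq k(D^++1)$ is exactly what Proposition~\ref{prop:escape} requires.
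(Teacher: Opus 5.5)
Your proof is correct and is essentially the paper's argument: you use the same invariant $r\notin N^+[C]$ with $|R(r,N^+[C])|>n/2$, and you apply Proposition~\ref{prop:escape} with $B=N^+[C']$ at each round. The only cosmetic difference is that the paper applies the proposition with the larger set $Z=R(r_t,C_{t+1})\supseteq R(r_t,B_t)$; your smaller choice works equally well, though your remark that $Z$ \emph{must} be defined via $N^+[C]$ is overstated, since the robber already knows $C'$ when it moves.
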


\begin{proof}
If
\[
 \frac{nh}{4\left(D^++1\right)}<1,
\]
then the desired strict inequality follows immediately from
$c_{1,\infty}\left(\Gamma^+\right)\geq1$.  We may therefore assume that
$nh/\left(4\left(D^++1\right)\right)\geq1$.

Let $\kappa$ be a positive integer satisfying
\begin{equation}\label{eq:kappa-hypothesis}
 \kappa\left(D^++1\right)\leq nh/4.
\end{equation}
We describe a strategy by which
the robber evades these $\kappa$ cops forever.

Let $C_t$ be the set of vertices occupied by cops immediately before the cops'
move in round $t+1$, equivalently immediately after the robber's move in round
$t$.  Put
\[
 B_t:=N^+[C_t].
\]
Although several cops may occupy the same vertex, we always have
\begin{equation}\label{eq:B-size}
 |B_t|\leq \left(D^++1\right)|C_t|\leq \kappa\left(D^++1\right)\leq nh/4.
\end{equation}
The robber maintains the following invariant after its move in round $t$
(see Figure~\ref{fig:danger-zone}):
\begin{equation}\label{eq:robber-invariant}
 r_t\notin B_t
 \qquad\text{and}\qquad
 |R\left(r_t,B_t\right)|>n/2.
\end{equation}

For initialization, after the cops choose $C_0$, we apply
Proposition~\ref{prop:escape} with $B=B_0$ and $Z=V\left(\Gamma^+\right)$; its size
hypothesis on $B$ holds by \eqref{eq:B-size}.  It supplies a legal
initial vertex $r_0$ satisfying \eqref{eq:robber-invariant}.

Suppose now that the invariant holds at time $t$.  On their next move the cops
occupy a set $C_{t+1}$ satisfying
\[
 C_{t+1}\subseteq N^+[C_t]=B_t.
\]
Since $r_t\notin B_t$, no cop lands on $r_t$.  Moreover, deleting fewer
vertices can only enlarge a reachable set, and hence
\[
 Z_{t+1}:=R\left(r_t,C_{t+1}\right)
 \supseteq R\left(r_t,B_t\right).
\]
In particular, $|Z_{t+1}|>n/2$.  We define
$B_{t+1}=N^+[C_{t+1}]$ and apply Proposition~\ref{prop:escape} with
$B=B_{t+1}$ and $Z=Z_{t+1}$, again using \eqref{eq:B-size}.  This gives a vertex
$r_{t+1}\in Z_{t+1}\setminus B_{t+1}$ satisfying
\[
 |R\left(r_{t+1},B_{t+1}\right)|>n/2.
\]
Because $r_{t+1}\in R\left(r_t,C_{t+1}\right)$, the robber can legally move from $r_t$ to
$r_{t+1}$.  Thus the invariant is restored and the robber is never captured.
It follows that every positive integer $\kappa$ satisfying
\eqref{eq:kappa-hypothesis} is
insufficient, which proves \eqref{eq:refined-directed-lower}.
\end{proof}

\begin{figure}[tbp]
\centering
\begin{tikzpicture}[x=1cm,y=1cm,
 danger/.style={paper vertex,draw=figOrange!85!black,fill=figOrange!20},
 occupied/.style={danger,double=figOrange!85!black,double distance=.6pt}]
\foreach \stage in {0,1} {
 \begin{scope}[xshift=\stage*7cm]
 \ifnum\stage=0
  \node[paper label] at (2.5,2.15) {(a) Before the cop's move};
  \node[occupied] (c) at (0,0) {$c$};
  \node[danger] (a) at (1.1,.85) {$a$};
  \node[danger] (b) at (1.1,-.85) {$b$};
  \node[paper reach] (r) at (2.6,0) {$r$};
  \node[paper reach] (u) at (3.8,.85) {$u$};
  \node[paper reach] (v) at (3.8,-.85) {$v$};
  \node[paper reach] (w) at (5,0) {$w$};
 \else
  \node[paper label] at (2.5,2.15) {(b) After the robber's response};
  \node[paper reach] (c) at (0,0) {$c$};
  \node[occupied] (a) at (1.1,.85) {$a$};
  \node[paper reach] (b) at (1.1,-.85) {$b$};
  \node[danger] (r) at (2.6,0) {$r$};
  \node[paper reach] (u) at (3.8,.85) {$u$};
  \node[paper vertex] (v) at (3.8,-.85) {$v$};
  \node[paper reach] (w) at (5,0) {$w$};
 \fi
 \foreach \tail/\head in {c/a,c/b,a/r,b/r,r/u,r/v,u/w,v/w}
  \draw[paper arc,draw=figInk!45] (\tail) -- (\head);
 \draw[paper arc,draw=figInk!45] (w.north) .. controls +(0,1.8) and +(0,1.8) .. (r.north);
 \draw[paper arc,draw=figInk!45] (w.south) .. controls +(0,-1.8) and +(0,-1.8) .. (c.south);
 \ifnum\stage=0
  \draw[paper arc,figOrange,line width=1.4pt] (c) -- (a);
  \node[paper label,anchor=north] at (2.5,-1.95)
   {$C_t=\{c\},\quad r_t=r$\\[3pt]
    $B_t=\{c,a,b\}$\\[3pt]
    $R(r,B_t)=\{r,u,v,w\}$};
 \else
  \draw[paper arc,figBlue,line width=1.4pt] (r) -- (u);
  \draw[paper arc,figTeal,line width=1.2pt] (u) -- (w);
  \draw[paper arc,figTeal,line width=1.2pt] (w.south) .. controls +(0,-1.8) and +(0,-1.8) .. (c.south);
  \draw[paper arc,figTeal,line width=1.2pt] (c) -- (b);
  \node[paper label,anchor=north] at (2.5,-1.95)
   {$C_{t+1}=\{a\},\quad r_{t+1}=u$\\[3pt]
    $B_{t+1}=\{a,r\}$\\[3pt]
    $R(u,B_{t+1})=\{u,w,c,b\}$};
 \fi
 \end{scope}
}
\end{tikzpicture}
\caption{One round illustrating the reachability invariant. Double circles
mark the cop; orange vertices form its closed out-neighbourhood, and teal
vertices form the displayed reachable set after deleting that neighbourhood.
The cop moves $c\to a$ (orange arrow), leaving $r$ unoccupied. In
$\Gamma^+-\{a\}$, the set $Z_{t+1}=R(r,\{a\})$ contains all vertices except
$a$, including the previous four-vertex reachable set. The robber moves
$r\to u$ (blue arrow). Although $r\in B_{t+1}$, this move is legal because
only $a$ is occupied. From $u$, the teal path $u\to w\to c\to b$ avoids
$B_{t+1}$, restoring a reachable set of size $4>7/2$.}
\label{fig:danger-zone}
\end{figure}

\begin{corollary}[Directed expanders]\label{cor:directed-expanders}

Let $\left(\Gamma^+_j\right)$ be a family of strongly connected digraphs such that
$|V\left(\Gamma^+_j\right)|\to\infty$, the maximum out-degrees are bounded by a constant
$D$, and $\iota_v^+\left(\Gamma^+_j\right)\geq\varepsilon>0$.  Then
\[
 c_{1,\infty}\left(\Gamma^+_j\right)=\Theta_{D,\varepsilon}\left(|V\left(\Gamma^+_j\right)|\right).
\]
If, instead, $\left(\Gamma^+_j\right)$ is an Eulerian family with maximum in- and
out-degrees at most $D$, it is enough to assume that
\[
 \iota_v\left(\Gamma_j\right)\geq\varepsilon,
 \qquad\text{where }\Gamma_j=U(\Gamma_j^+).
\]
The same conclusion then holds.
\end{corollary}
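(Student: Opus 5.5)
The first assertion follows by combining Theorem~\ref{thm:directed-lower} with the trivial upper bound. With $n_j=|V(\Gamma_j^+)|$, Theorem~\ref{thm:directed-lower} gives
\[
 c_{1,\infty}(\Gamma_j^+)>\frac{n_j\varepsilon}{4(D+1)},
\]
and $c_{1,\infty}(\Gamma_j^+)\leq n_j$ holds because a cop placed on every vertex captures immediately. Hence $c_{1,\infty}(\Gamma_j^+)=\Theta_{D,\varepsilon}(n_j)$, with lower constant $\varepsilon/(4(D+1))$ and upper constant $1$. The finitely many members with $n_j<3$ are excluded once $n_j\to\infty$, or are absorbed into the constants.

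For the Eulerian case, the plan is to show $\iota_v^+(\Gamma_j^+)\geq\varepsilon'$ for some $\varepsilon'>0$ depending only on $D$ and $\varepsilon$, and then apply the first part. Strong connectivity is automatic: a weakly connected Eulerian digraph is strongly connected, and $\Gamma_j$ is connected because $\iota_v(\Gamma_j)>0$. Fix $A$ with $0<|A|\leq n/2$. The undirected boundary satisfies $\partial_vA=\partial_v^+A\cup\partial_v^-A$. Since the digraph is Eulerian, for every $A$ we have
\[
 e(A,V\setminus A)=e(V\setminus A,A),
\]
obtained by summing $d^+(v)-d^-(v)=0$ over $v\in A$ and cancelling the arcs inside $A$. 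Now every vertex of $\partial_v^-A$ sends at least one arc into $A$, so $|\partial_v^-A|\leq e(V\setminus A,A)$. Every arc from $A$ to its complement has its head in $\partial_v^+A$, and each such head has in-degree at most $D$, so $e(A,V\setminus A)\leq D|\partial_v^+A|$. Putting these together gives
\[
 |\partial_v^-A|\leq D|\partial_v^+A|,
 \qquad\text{hence}\qquad
 \varepsilon|A|\leq|\partial_vA|\leq(D+1)|\partial_v^+A|.
\]
Therefore $\iota_v^+(\Gamma_j^+)\geq\varepsilon/(D+1)$, and the first part yields the conclusion with constants depending on $D$ and $\varepsilon$.

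The only delicate point is this comparison between in- and out-boundaries. It needs the Eulerian arc balance together with the in-degree bound $D^-\leq D$; the out-degree bound alone would not suffice. Everything else is a direct citation of Theorem~\ref{thm:directed-lower}.
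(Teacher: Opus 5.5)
Your proposal is correct and follows essentially the same route as the paper: the lower bound from Theorem~\ref{thm:directed-lower}, the upper bound from placing one cop on every vertex, and the Eulerian case through the in/out boundary comparison, which you reprove inline rather than citing Lemma~\ref{lem:in-out-comparison}. Your extra checks are not in the paper's proof but are sound and harmless: strong connectivity follows from $\iota_v(\Gamma_j)>0$ plus the Eulerian property, and the finitely many members with $n_j<3$ can be absorbed into the constants.
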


\begin{proof}
The lower bound is Theorem~\ref{thm:directed-lower}; the upper bound follows by
placing one cop at every vertex.  In the Eulerian case,
Lemma~\ref{lem:in-out-comparison} gives
$\iota_v^+\left(\Gamma^+_j\right)\geq\varepsilon/\left(D+1\right)$.
\end{proof}

\begin{remark}\label{rem:undirected-comparison}
Applied to an undirected graph, viewed as a symmetric digraph,
Theorem~\ref{thm:directed-lower} recovers the isoperimetric lower bound
\eqref{eq:mehrabian-lower-intro} up to the factor $\Delta/(\Delta+1)$,
uniformly in $n$; this factor tends to $1$ as $\Delta\to\infty$. Its
proof does not use connected components, and therefore applies to every
strongly connected digraph.
\end{remark}

\section{Cayley digraphs: underlying graphs and cyclic quotients}

Let $G$ be a finite group and $S\subseteq G\setminus\{1\}$, with notation
as in Section~\ref{sec:cayley-notation}; put $d=|S|$.

\subsection{Comparing directed and undirected vertex expansion}

We recall Babai's Eulerian boundary comparison
\cite[Lemma~3.4]{Babai2006} and its short proof.

\begin{lemma}[Eulerian in/out comparison]
\label{lem:in-out-comparison}
Let $\Gamma^+$ be a finite Eulerian digraph on at least two vertices, that is,
$d^+\left(v\right)=d^-\left(v\right)$ for every $v\in V\left(\Gamma^+\right)$.  Let $D^-$ be its maximum
in-degree and let $\Gamma=U(\Gamma^+)$ be its underlying undirected graph.  Then,
for every $A\subseteq V\left(\Gamma^+\right)$,
\[
 |\partial_v^-A|\leq D^-|\partial_v^+A|
\]
and
\[
 |\partial_vA|
 \leq\left(D^-+1\right)|\partial_v^+A|.
\]
Consequently,
\begin{equation}\label{eq:in-out-comparison}
 \iota_v^+\left(\Gamma^+\right)\leq\iota_v\left(\Gamma\right)
 \leq\left(D^-+1\right)\iota_v^+\left(\Gamma^+\right).
\end{equation}
\end{lemma}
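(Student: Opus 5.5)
The plan is to prove the two boundary inequalities by double counting arcs across the cut, using the Eulerian balance $d^+(v)=d^-(v)$, and then to pass to the isoperimetric numbers directly from the definitions.

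\textbf{Step 1: the in/out comparison.} Fix $A$ and let $\bar A$ be its complement. Summing $d^+(v)-d^-(v)=0$ over $v\in A$ and cancelling the arcs internal to $A$ gives
\[
 e(A,\bar A)=e(\bar A,A).
\]
Every arc from $A$ to $\bar A$ has its head in $\partial_v^+A$. Each vertex there receives at most $D^-$ arcs, so $e(A,\bar A)\leq D^-|\partial_v^+A|$. On the other hand, each vertex of $\partial_v^-A$ is the tail of at least one arc into $A$, so $|\partial_v^-A|\leq e(\bar A,A)$. Chaining these three facts gives $|\partial_v^-A|\leq D^-|\partial_v^+A|$.

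\textbf{Step 2: the undirected comparison.} In $\Gamma=U(\Gamma^+)$, a vertex of $\bar A$ is adjacent to $A$ exactly when it has an arc to $A$ or an arc from $A$. Hence
\[
 \partial_vA=\partial_v^+A\cup\partial_v^-A .
\]
Taking sizes and using Step 1,
\[
 |\partial_vA|\leq|\partial_v^+A|+D^-|\partial_v^+A|=(D^-+1)|\partial_v^+A|.
\]

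\textbf{Step 3: the isoperimetric numbers.} The inclusion $\partial_v^+A\subseteq\partial_vA$ holds for every $A$. The minima in \eqref{eq:def-hplus} and in the definition of $\iota_v(\Gamma)$ run over the same admissible family, all $A$ with $0<|A|\leq n/2$.

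1. Applying $|\partial_v^+A|\leq|\partial_vA|$ to a minimizer $A$ for $\iota_v(\Gamma)$ gives $\iota_v^+(\Gamma^+)\leq\iota_v(\Gamma)$.
2. Applying Step 2 to a minimizer $A$ for $\iota_v^+(\Gamma^+)$ gives $\iota_v(\Gamma)\leq(D^-+1)\iota_v^+(\Gamma^+)$.

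Together these yield \eqref{eq:in-out-comparison}.

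No step is a serious obstacle. The only point needing care is the cancellation identity $e(A,\bar A)=e(\bar A,A)$, which is exactly where the Eulerian hypothesis enters. One should also note that parallel arcs are excluded, so $e(\bar A,A)$ counts arcs and the inequality $|\partial_v^-A|\leq e(\bar A,A)$ is valid.
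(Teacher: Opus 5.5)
Your proposal is correct and follows the paper's proof essentially step for step. Both use the Eulerian cancellation $e(A,V\setminus A)=e(V\setminus A,A)$, bound heads in $\partial_v^+A$ by $D^-$ and tails in $\partial_v^-A$ by one arc each, use $\partial_vA=\partial_v^+A\cup\partial_v^-A$, and minimize over $0<|A|\leq n/2$. Your closing caveat about parallel arcs is harmless but unnecessary, since $|\partial_v^-A|\leq e(V\setminus A,A)$ would still hold with arcs counted by multiplicity.
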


\begin{proof}
Since $\Gamma^+$ is Eulerian, we have
\[
 \begin{split}
 e\left(A,V\setminus A\right)
 &=\sum_{v\in A}d^+\left(v\right)-e\left(A,A\right)\\
 &=\sum_{v\in A}d^-\left(v\right)-e\left(A,A\right)
 =e\left(V\setminus A,A\right).
 \end{split}
\]
Every vertex of $\partial_v^-A$ is the tail of at least one arc entering $A$,
whereas every arc leaving $A$ has its head in $\partial_v^+A$.  Since each
such head has in-degree at most $D^-$,
\[
 |\partial_v^-A|
 \leq e\left(V\setminus A,A\right)
 =e\left(A,V\setminus A\right)
 \leq D^-|\partial_v^+A|.
\]
Furthermore,
\[
 \partial_vA
 =\partial_v^+A\cup\partial_v^-A,
\]
and therefore
\[
 |\partial_vA|
 \leq\left(D^-+1\right)|\partial_v^+A|.
\]
The reverse pointwise containment
$\partial_v^+A\subseteq\partial_vA$ gives the other
inequality.  We complete the proof by minimizing over $0<|A|\leq|V\left(\Gamma^+\right)|/2$.
\end{proof}

\begin{corollary}[Comparison of in- and out-expansion]
\label{cor:eulerian-expansion}
Let $\Gamma^+$ be a finite strongly connected Eulerian digraph on at least
two vertices, with maximum out-degree $D^+$ and maximum in-degree $D^-$.
Then
\[
 \frac{\iota_v^+\left(\Gamma^+\right)}{D^+}
 \leq\iota_v^-\left(\Gamma^+\right)
 \leq D^-\iota_v^+\left(\Gamma^+\right),
\]
and
\[
 \iota_v^-\left(\Gamma^+\right)
 \leq\iota_v\left(\Gamma\right)
 \leq\left(D^++1\right)\iota_v^-\left(\Gamma^+\right).
\]
Together with \eqref{eq:in-out-comparison}, these bounds compare all three
isoperimetric numbers with constants depending only on $D^+$ and $D^-$.
\end{corollary}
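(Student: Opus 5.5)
The plan is to apply Lemma~\ref{lem:in-out-comparison} twice: once to $\Gamma^+$ and once to its reverse. The reverse digraph $\left(\Gamma^+\right)^{\mathrm{rev}}$ is again finite, strongly connected and Eulerian. Its maximum in-degree is $D^+$, its maximum out-degree is $D^-$, and its underlying undirected graph is the same $\Gamma$. Reversal also swaps the two boundaries: for every $A$, the out-boundary of $A$ in the reverse digraph is $\partial_v^-A$ in $\Gamma^+$, and its in-boundary is $\partial_v^+A$. By definition, $\iota_v^-\left(\Gamma^+\right)$ is therefore the out-vertex-isoperimetric number of the reverse digraph.

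For the second chain, we apply \eqref{eq:in-out-comparison} to the reverse digraph, whose maximum in-degree is $D^+$. This gives $\iota_v^-\left(\Gamma^+\right)\leq\iota_v\left(\Gamma\right)\leq\left(D^++1\right)\iota_v^-\left(\Gamma^+\right)$, exactly as stated.

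For the first chain, we use the pointwise inequalities in Lemma~\ref{lem:in-out-comparison} before minimizing. Applied to $\Gamma^+$, the lemma gives $|\partial_v^-A|\leq D^-|\partial_v^+A|$ for every $A$. Dividing by $|A|$ and minimizing over $0<|A|\leq n/2$ yields $\iota_v^-\left(\Gamma^+\right)\leq D^-\iota_v^+\left(\Gamma^+\right)$; since both sides are minimized over the same family of sets, the minimum passes through the inequality. Applied to the reverse digraph, whose maximum in-degree is $D^+$, the same pointwise bound reads $|\partial_v^+A|\leq D^+|\partial_v^-A|$. Minimizing gives $\iota_v^+\left(\Gamma^+\right)\leq D^+\iota_v^-\left(\Gamma^+\right)$, which is the left inequality.

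The only point needing care is the bookkeeping of which degree bound appears after reversal, together with the observation that $U$ commutes with reversal. Strong connectivity is not actually used except to make the quantities positive. The final sentence of the corollary is immediate: chaining these bounds with \eqref{eq:in-out-comparison} compares each pair of the three isoperimetric numbers with constants depending only on $D^\pm$.
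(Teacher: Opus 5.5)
Your proposal is correct and follows the paper's proof. The pointwise bound $|\partial_v^-A|\leq D^-|\partial_v^+A|$ from Lemma~\ref{lem:in-out-comparison} gives $\iota_v^-\leq D^-\iota_v^+$, and applying the same lemma to $\left(\Gamma^+\right)^{\mathrm{rev}}$, which is Eulerian with maximum in-degree $D^+$ and underlying graph $\Gamma$, gives the remaining inequalities. Your explicit bookkeeping of how reversal swaps $\partial_v^+$ and $\partial_v^-$ spells out what the paper's terse argument leaves implicit.
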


\begin{proof}
Minimizing the boundary inequality in Lemma~\ref{lem:in-out-comparison}
gives $\iota_v^-\left(\Gamma^+\right)\leq D^-\iota_v^+\left(\Gamma^+\right)$.
We apply the same lemma to $\left(\Gamma^+\right)^{\mathrm{rev}}$, which is Eulerian,
has maximum in-degree $D^+$, and has the same underlying undirected graph
as $\Gamma^+$.
This gives the other inequalities.
\end{proof}

\begin{remark}\label{rem:cayley-eulerian}
Every finite vertex-transitive digraph is Eulerian.  Indeed, transitivity
makes the in-degree and out-degree constant, and equality of the total
in-degree and total out-degree makes these two constants equal.  In
particular, a Cayley digraph $\Cay\left(G,S\right)$ is Eulerian with
$D^-=D^+=|S|$.
If $G$ is abelian and $|G|\geq2$, inversion $g\mapsto g^{-1}$ is an isomorphism from
$\Gamma^+=\Cay(G,S)$ to $\left(\Gamma^+\right)^{\mathrm{rev}}$. Consequently
$\iota_v^+(\Gamma^+)=\iota_v^-(\Gamma^+)$ and
$c_{1,\infty}(\Gamma^+)=c_{1,\infty}(\left(\Gamma^+\right)^{\mathrm{rev}})$.
\end{remark}

Corollary~\ref{cor:eulerian-expansion} applies to every Eulerian digraph.
For non-Eulerian digraphs, in- and out-expansion can have different orders.

\begin{proposition}[Separation of in- and out-expansion]
\label{prop:expansion-separation}
For every positive integer $n$ divisible by $6$, there is a strongly
connected non-Eulerian digraph $\Gamma^+_n$ on $n$ vertices such that
\[
 \iota_v^+\left(\Gamma^+_n\right)=\frac3n,
 \qquad \iota_v^-\left(\Gamma^+_n\right)=\frac13.
\]
Consequently the ratio of in-expansion to out-expansion is $n/9$, and is unbounded.
Reversing all arcs gives the opposite separation.
\end{proposition}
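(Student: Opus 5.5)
We construct the digraph explicitly: a complete block that can be left through only one vertex, and a larger complete block that sends arcs to all of the first block. Put $k:=n/3$ and split the vertex set as $V=X\sqcup Y$ with $|X|=k$ and $|Y|=2k$, and fix one vertex $y_0\in Y$. The arcs of $\Gamma^+_n$ are:
\begin{enumerate}[label=\textup{(\alph*)}]
\item all arcs $u\to v$ with $u\neq v$ and $u,v$ both in $X$ or both in $Y$;
\item all arcs $y\to x$ with $y\in Y$ and $x\in X$;
\item the arcs $x\to y_0$ for $x\in X$.
\end{enumerate}
The digraph is strongly connected, since each block is complete, $Y\to X$ is present and $X\to y_0$ is present. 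Every vertex has out-degree $n-1$. A vertex of $X$ has in-degree $n-1$, whereas a vertex $y\neq y_0$ of $Y$ has in-degree $2k-1<n-1$; hence $\Gamma^+_n$ is not Eulerian.

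\emph{Out-expansion.} Let $0<|A|\leq n/2$; we split according to whether $A$ meets $Y$.
\begin{itemize}
\item If $A$ meets $Y$, then any $y\in A\cap Y$ has out-neighbourhood $V\setminus\{y\}$. Thus $\partial_v^+A=V\setminus A$, and the ratio is at least $(n/2)/(n/2)=1$.
\item If $A\subseteq X$, then $\partial_v^+A=(X\setminus A)\cup\{y_0\}$. The ratio is $(k-|A|+1)/|A|\geq 1/|A|\geq 1/k=3/n$.
\end{itemize}
Equality holds in the second case for $A=X$. Since $n\geq6$ gives $3/n\leq1$, we get $\iota_v^+=3/n$.

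\emph{In-expansion.} Again let $0<|A|\leq n/2$; now we split according to whether $A$ meets $X\cup\{y_0\}$.
\begin{itemize}
\item If $A$ contains a vertex of $X\cup\{y_0\}$, that vertex has every other vertex as an in-neighbour. Then $\partial_v^-A=V\setminus A$, and the ratio is at least $1$.
\item Otherwise $A\subseteq Y\setminus\{y_0\}$, and $\partial_v^-A=Y\setminus A$. Its size is at least $2k-n/2=n/6$, so the ratio is at least $(n/6)/(n/2)=1/3$.
\end{itemize}
For the bound $1/3$ to be attained we need a subset of $Y\setminus\{y_0\}$ of size exactly $n/2$. This requires $2n/3-1\geq n/2$, which holds because $n\geq6$. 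Choosing such a set gives ratio exactly $1/3$, so $\iota_v^-=1/3$.

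The ratio of in-expansion to out-expansion is then $(1/3)/(3/n)=n/9$. Reversing every arc swaps $\partial_v^+$ and $\partial_v^-$, and therefore swaps the two values, giving the opposite separation.

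The main point to check is the case analysis: one must make sure that sets meeting both blocks cannot beat the claimed extremal sets. This is handled by the observation that a single vertex with full out-neighbourhood, respectively full in-neighbourhood, makes the boundary equal to the whole complement, which has size at least $n/2$.
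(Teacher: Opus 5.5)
Your proposal is correct and follows the paper's proof: it uses the same two-block construction with the single exit vertex $y_0$ (the paper's $b_0$) and essentially the same case analysis. The only cosmetic difference is that the paper splits the in-expansion cases by whether the set meets the large block rather than $X\cup\{y_0\}$. One slip needs fixing: vertices of $X$ have out-degree $n/3$, not $n-1$, since their only arc leaving $X$ goes to $y_0$, as your own out-boundary computation shows. Your non-Eulerian conclusion is unaffected, because the witness $y\neq y_0$ does have out-degree $n-1$ and in-degree $2n/3-1$.
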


\begin{proof}
We partition the vertex set as
$V=A\mathbin{\dot\cup}B$ with $|A|=n/3$ and $|B|=2n/3$, and choose
$b_0\in B$.  The digraph $\Gamma^+=\Gamma^+_n$ has both arcs between every
pair of vertices within $A$ and within $B$, every arc from $B$ to $A$, and
an arc $a\to b_0$ for every $a\in A$.
It is strongly connected. If $0<|U|\leq n/2$ and $U$ meets $B$, then
$\partial_v^+U=V\setminus U$, giving boundary ratio at least $1$.
If $U\subseteq A$, its out-boundary is $(A\setminus U)\cup\{b_0\}$,
so its ratio is $(|A|+1-|U|)/|U|$. This is minimized at $U=A$.
Since $n\geq6$, it follows that
\[
 \iota_v^+\left(\Gamma^+\right)=\frac{1}{|A|}=\frac3n.
\]
The digraph is not Eulerian.  Indeed, for every $a\in A$,
\[
 d^+\left(a\right)=|A|,
 \qquad
 d^-\left(a\right)=|A|-1+|B|.
\]
On the other hand, if $0<|U|\leq n/2$ and $U\cap B\neq\varnothing$, then
$B\setminus U\subseteq\partial_v^-U$ and
$|B\setminus U|\geq n/6$, so
$|\partial_v^-U|/|U|\geq1/3$.  If $U\subseteq A$, then
$B\subseteq\partial_v^-U$, giving an even larger ratio. To attain $1/3$, we choose a set
$U\subseteq B\setminus\{b_0\}$ of size $n/2$, which is possible since
$2n/3-1\geq n/2$. Its in-boundary is exactly $B\setminus U$, of size $n/6$.
Thus
\[
 \iota_v^-\left(\Gamma^+\right)=\frac13,
\]
and the ratio $\iota_v^-/\iota_v^+$ equals $n/9$.  Reversing all arcs gives
the opposite separation.

\end{proof}

\subsection{Lifting strategies from Schreier digraphs}

\begin{proposition}[Expansion of Schreier digraphs]
\label{prop:schreier-expansion}
Let $G$ be a finite group, $K<G$ a proper subgroup, and
$S\subseteq G\setminus\{1\}$, with $d=|S|$.
Let $\Sigma^+=\Sch\left(K\backslash G,S\right)$, with loops and parallel
arcs suppressed.  Then
\[
 \iota_v^+\left(\Sigma^+\right)
 \leq\iota_v\left(U(\Sigma^+)\right)
 \leq\left(d+1\right)\iota_v^+\left(\Sigma^+\right).
\]
\end{proposition}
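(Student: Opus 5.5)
The plan is to redo the argument of Lemma~\ref{lem:in-out-comparison}, but with arc counting replaced by counting through permutations. After suppression, $\Sigma^+$ need not be Eulerian or regular, so the lemma cannot be applied as stated. However, suppressing loops and parallel arcs does not change any of the sets $N^+(A)$, $N^-(A)$, $N(A)$. We may therefore work with the unsuppressed structure. For each $s\in S$, let $\sigma_s\colon K\backslash G\to K\backslash G$ be the map $Kx\mapsto Kxs$. It is well defined, since $Kx=Ky$ implies $Kxs=Kys$. It is a bijection, with inverse $Kx\mapsto Kxs^{-1}$. The arcs of $\Sigma^+$ are then exactly the pairs $u\to\sigma_s(u)$ with $\sigma_s(u)\neq u$.

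\textbf{Step 1 (boundaries as unions).} For $A\subseteq K\backslash G$, the arc description gives
\[
 \partial_v^+A=\bigcup_{s\in S}\bigl(\sigma_s(A)\setminus A\bigr),
 \qquad
 \partial_v^-A=\bigcup_{s\in S}\bigl(\sigma_s^{-1}(A)\setminus A\bigr).
\]
A vertex $v\notin A$ lies in $\partial_v^-A$ exactly when $\sigma_s(v)\in A$ for some $s$, that is, when $v\in\sigma_s^{-1}(A)$. Loops contribute nothing to either union, because we remove $A$.

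\textbf{Step 2 (the permutation identity).} For a bijection $\sigma$ of a finite set,
\[
 |\sigma^{-1}(A)\setminus A|=|A|-|\sigma^{-1}(A)\cap A|=|A|-|A\cap\sigma(A)|=|\sigma(A)\setminus A|,
\]
where the middle equality applies $\sigma$ to the intersection. Hence $|\sigma_s^{-1}(A)\setminus A|=|\sigma_s(A)\setminus A|\leq|\partial_v^+A|$ for each $s$. A union bound over the $d$ generators then gives $|\partial_v^-A|\leq d\,|\partial_v^+A|$. Since $\partial_vA=\partial_v^+A\cup\partial_v^-A$ in $U(\Sigma^+)$, we obtain $|\partial_vA|\leq(d+1)|\partial_v^+A|$.

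\textbf{Step 3 (conclusion).} The inclusion $\partial_v^+A\subseteq\partial_vA$ gives the left inequality pointwise. Both isoperimetric numbers minimize over the same family of sets $0<|A|\leq|K\backslash G|/2$, so minimizing the two pointwise bounds yields the claimed chain. Since $K$ is proper, there are at least two vertices and the family of sets is nonempty.

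I expect the only genuine subtlety to be Step 2. This is where the Eulerian hypothesis of Lemma~\ref{lem:in-out-comparison} is replaced by the permutation structure of the right action, and it is why the constant is $d+1$ rather than $D^-+1$ of the suppressed digraph.
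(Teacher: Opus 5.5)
Your proof is correct and is essentially the paper's argument. The paper keeps the labelled multidigraph with loops and multiplicities, notes that it is $d$-in- and $d$-out-regular, and reruns the crossing-arc count of Lemma~\ref{lem:in-out-comparison}; your per-generator identity $|\sigma_s^{-1}(A)\setminus A|=|\sigma_s(A)\setminus A|$ is that same crossing-arc equality split by label, and both routes use the fact that suppressing loops and parallel arcs changes no vertex boundary.
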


\begin{proof}
We retain one labelled arc $Kx\to Kxs$ for every coset $Kx$ and every
$s\in S$, including loops and multiplicities.  For each fixed $s$,
right multiplication permutes $K\backslash G$, so the labelled
multidigraph is $d$-in-regular and $d$-out-regular.
We can therefore apply the crossing-arc argument of
Lemma~\ref{lem:in-out-comparison}, counting arcs with multiplicity,
to obtain the stated inequalities.  Suppressing loops and
parallel arcs changes none of the vertex boundaries, so the inequalities
also hold for the simple Schreier digraph.  The simple digraph need not
itself be Eulerian; the argument uses equality of in-degree and out-degree
at each vertex of the labelled multidigraph.
\end{proof}

\begin{proposition}[Schreier lifting]\label{prop:quotient-lifting}
Let $G$ be a finite group, $K\leq G$, and $S\subseteq G\setminus\{1\}$.  If
\[
 \Gamma^+=\Cay\left(G,S\right),\qquad
 \Sigma^+=\Sch\left(K\backslash G,S\right),
\]
then
\begin{equation}\label{eq:quotient-lifting}
 c_{1,\infty}\left(\Gamma^+\right)\leq
 |K|\,c_{1,\infty}\left(\Sigma^+\right).
\end{equation}
The same statement holds for every finite robber speed.
\end{proposition}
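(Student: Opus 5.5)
The plan is to simulate a winning strategy on the Schreier digraph by a team of $|K|$ times as many cops on $\Gamma^+$. Each quotient cop becomes a squad that occupies an entire coset. Let $k=c_{1,\infty}(\Sigma^+)$ and fix a winning strategy for $k$ cops on $\Sigma^+$. Let $\pi:G\to K\backslash G$, $x\mapsto Kx$, be the projection.

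The first point is that $\pi$ maps arcs to arcs or to loops: an arc $x\to xs$ in $\Gamma^+$ goes to $Kx\to Kxs$. Hence a robber move in $\Gamma^+$ along a directed path projects to a walk in $\Sigma^+$. Remove the loops, which correspond to staying put, and shorten the walk to a path by deleting closed subwalks. If the original path avoids every vertex of an occupied coset, its projection avoids the corresponding occupied vertex of $\Sigma^+$. The shortened path is no longer than the original, so the same reasoning works for every finite speed $s$.

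The second point is that squads can follow quotient cop moves. Suppose a quotient cop moves $Kx\to Kxs$ (or stays put). Then its squad, occupying $Kx=\{kx:k\in K\}$, moves by the arcs $kx\to kxs$ to occupy $Kxs$ exactly. These are genuine arcs of $\Gamma^+$, and the map $kx\mapsto kxs$ is a bijection $Kx\to Kxs$, so each squad member moves one arc and the new coset is fully covered.

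The simulation then runs as follows. The $|K|k$ cops start on the full cosets prescribed by the initial positions of the quotient strategy. The robber's position $r$ is shadowed by $\pi(r)$. After each cop round in $\Gamma^+$, the quotient cops sit at the images of the squads, and the quotient strategy answers the shadow robber's legal move. Because the quotient strategy wins, some quotient cop eventually lands on $\pi(r)$. At that moment its squad occupies the whole coset containing $r$, so a cop sits on $r$ and the robber is captured.

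The only delicate point is checking that shadow moves are legal in the quotient game under the paper's rules: the path must avoid occupied vertices including its endpoints, and the robber may stay put. The avoidance claim is exactly where we need full-coset occupation: the robber's path avoids all occupied cosets pointwise, so its projection avoids their images. Possible coincidences of several quotient cops, and multiple cops on one vertex, cause no trouble. Capture at the initial placement or immediately after a cop move transfers directly.
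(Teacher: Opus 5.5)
Your proposal is correct and follows the paper's proof essentially step for step. Each quotient cop becomes a squad occupying a full right coset and moves by right multiplication by $s$, which is a bijection $Kx\to Kxs$. The robber's cop-free path is projected, loops and closed subwalks are deleted, and full-coset occupation makes the projected path legal and no longer than the original, which also gives the finite-speed case.
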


\begin{proof}
We replace each cop at a Schreier vertex $Kx$ with one cop at every element of the
right coset $Kx$. If the Schreier cop moves from $Kx$ to $Kxs$, we move every cop
in $Kx$ by right multiplication with $s$.  This maps $Kx$ bijectively onto
$Kxs$, so the new coset is again completely occupied.  A stationary Schreier
cop is simulated by stationary cops.

We project the robber's path by $x\mapsto Kx$ and delete consecutive repetitions.
The result is a directed walk; we erase closed subwalks to obtain a path
with the same initial and terminal cosets.
Since every occupied Schreier vertex has its entire coset occupied in the
Cayley digraph, a legal robber path in the Cayley digraph projects to a legal
path in the Schreier digraph.  When a Schreier cop reaches the robber's
projected vertex, the corresponding coset is
completely occupied in the Cayley digraph, so one of its cops occupies the
robber's actual vertex.  Thus the lifted cops implement the Schreier strategy,
using $|K|$ cops for each Schreier cop.  Projection cannot increase path
length, which proves the same claim for every finite speed.
\end{proof}

\begin{remark}[Nonnormal subgroups]\label{rem:nonnormal-lift}
If $K\trianglelefteq G$, then the Schreier digraph is the usual quotient Cayley
digraph, so Proposition~\ref{prop:quotient-lifting} contains normal quotient
lifting as a special case. Let $m\geq3$ be odd and
\[
 D_{2m}=\langle r,s:r^m=s^2=1,\ srs=r^{-1}\rangle,
 \qquad S=\{s,sr\}.
\]
For $K=\langle s\rangle$, identify the coset $Kr^i$ with $i\in\Z_m$.
Right multiplication by $s$ and $sr$ induces the involutions
$i\mapsto-i$ and $i\mapsto1-i$, respectively.  After loops are suppressed,
$\Sch\left(K\backslash D_{2m},S\right)$ is a path.  A single cop that starts
at one endpoint and moves one step towards the other endpoint in each round
captures the robber, since no cop-free path passes the cop's vertex.
Proposition~\ref{prop:quotient-lifting} therefore gives
\[
 c_{1,\infty}\left(\Cay\left(D_{2m},\{s,sr\}\right)\right)\leq2.
\]
Indeed this Cayley graph is the cycle $C_{2m}$, so equality holds: deleting
the vertex of a single cop leaves a path, along which the robber can always
move to a vertex at distance at least $2$ from the cop.  In contrast,
when $m$ is odd the abelianization of $D_{2m}$ is only $\Z_2$, and lifting from
that normal quotient alone gives only an order-$m$ bound.
\end{remark}

\subsection{Sweeping a cyclic quotient}

\begin{figure}[tbp]
\centering

\begin{tikzpicture}[x=.62cm,y=1.2cm]
\foreach \row in {0,1} {
 \pgfmathsetmacro{\yy}{-1.5*\row}
 \draw[figInk!55] (0,\yy) -- (13,\yy);
 \draw[figInk!45,dashed,rounded corners=5pt] (0,\yy) -- (-.7,\yy) -- (-.7,\yy-.5) -- (13.7,\yy-.5) -- (13.7,\yy) -- (13,\yy);
 \foreach \i in {0,...,13} \node[paper vertex,minimum size=4.5mm] (v\row\i) at (\i,\yy) {};
 \foreach \i in {9,10} \node[paper cop,minimum size=4.5mm] at (v\row\i) {};
}
\foreach \i in {2,3} \node[paper cop,minimum size=4.5mm] at (v0\i) {};
\foreach \i in {3,4} \node[paper cop,minimum size=4.5mm] at (v1\i) {};
\node[paper reach] at (v06) {$r$};
\node[paper reach] at (v17) {$r'$};
\draw[figTeal,line width=3pt] (4,.42) -- (8,.42);
\draw[figTeal,line width=3pt] (5,-1.08) -- (8,-1.08);
\node[paper label,text=figTeal] at (6,.8) {robber gap $J$};
\node[paper label,text=figOrange!80!black] at (2.5,.8) {$I_L$};
\node[paper label,text=figOrange!80!black] at (9.5,.8) {$I_R$};
\draw[paper arc,figOrange,line width=1.5pt] (15,-.2) -- node[right,paper label] {$+1$} (15,-1.3);
\node[paper label,anchor=east] at (-1.2,0) {before};
\node[paper label,anchor=east] at (-1.2,-1.5) {after};
\end{tikzpicture}

\caption{One step of the cyclic sweep with $m=14$ and $Q=2$. Translating $I_L$ by $+1$ while fixing $I_R$ shrinks the robber's gap. The baseline gives cyclic order, and dashed lines indicate wrap-around. Barriers of $Q$ occupied residues block every allowed step between the gaps.}
\label{fig:cyclic-sweep}
\end{figure}

The following lemma formulates the barrier argument on a cyclic digraph.  Its
main case is the two-barrier strategy of
\cite[Proposition~4.6]{Biswas2026}.  Separate treatment of the two boundary
cases extends that strategy to all quotient orders.

\begin{lemma}[Cyclic barrier]\label{lem:cyclic-barrier}
Let $A\subseteq\Z_m$, where $m\geq2$, generate $\Z_m$.  We allow
$0\in A$; the loops arising from $0$ are suppressed and have no effect on
either player's legal moves. Put $Q:=Q(A)$, using the cyclic displacement
notation in Section~\ref{sec:cayley-notation}.
Then $1\leq Q\leq\lfloor m/2\rfloor$ and
\begin{equation}\label{eq:cyclic-barrier}
 c_{1,\infty}\left(\Cay\left(\Z_m,A\right)\right)\leq2Q.
\end{equation}
\end{lemma}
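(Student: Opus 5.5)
Set $\Gamma^+:=\Cay(\Z_m,A)$ and $Q:=Q(A)$. The bounds on $Q$ are immediate. Every $|a|_m$ is at most $\lfloor m/2\rfloor$. Since $A$ generates $\Z_m$ and $m\geq2$, some $a\in A$ is nonzero, and then $|a|_m\geq1$.

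For \eqref{eq:cyclic-barrier} I will split into three cases: $2Q\geq m$, $2Q=m-1$, and $2Q\leq m-2$. If $2Q\geq m$, the $2Q$ cops simply occupy every vertex at the start, and the robber has no legal initial vertex. So assume $2Q<m$. The key observation is that an \emph{interval} $I$ of $Q$ consecutive residues is a barrier. Every arc of $\Gamma^+$ is a step $x\to x+a$ with cyclic displacement at most $Q$, so no such step jumps over $I$. Precisely, if $I=\{j,j+1,\dots,j+Q-1\}$, then any arc from a vertex outside $I$ either lands in $I$ or stays on the same side. Here "side" means an arc of the cycle, viewed as an interval in a lift to $\Z$, and the lift has to be chosen carefully because of wrap-around.

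The main case is $2Q\leq m-2$, and it uses the two-barrier sweep of \cite[Proposition~4.6]{Biswas2026}. Place $Q$ cops on an interval $I_L=\{0,\dots,Q-1\}$ and $Q$ cops on an interval $I_R=\{Q,\dots,2Q-1\}$. The robber starts in the complementary gap $J=\{2Q,\dots,m-1\}$, which is a cyclic interval whose two ends abut $I_R$ and $I_L$. The cops then advance $I_R$ by $+1$ each round. This is legal only if $+1$ is realisable as a single arc. Since it need not be, I would instead move each cop along a generator and argue via the lift.

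Rather than rely on $1\in A$, I plan the following cleaner formulation. Each cop of a barrier moves by a fixed element $a\in A$ with $|a|_m$ small, and I translate the whole barrier by $a$. Alternatively, I keep one barrier fixed and let the other translate through $J$ in steps of some $a$. By the barrier property, the robber's reachable set is confined to $J$ minus the occupied vertices. After the translation, the new barrier still covers $Q$ consecutive residues adjacent to the shrinking gap, so the robber stays confined and its gap shrinks by $|a|$. I must also check that the moving barrier never leaves a window the robber can slip through during the round in which it moves. The cops move first and occupy the translated interval before the robber moves. The translated interval is again a barrier, so the robber's new position lies in the new gap, and the gap strictly shrinks until it is empty.

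This step is the main obstacle: ensuring the translated set is again a contiguous block of $Q$ residues bordering the gap, uniformly in $A$. I would translate by $s\in A$ with $0<\tilde s\leq Q$ in the positive representative, choosing the sweep direction accordingly. The "behind" side of the moving barrier is harmless because it abuts the fixed barrier, since the complement of the gap lies behind it. The boundary case $2Q=m-1$ leaves a gap of a single vertex $g$ between the two barriers. There I would argue directly. Either $Q$ extra moves are unnecessary because one cop can step onto $g$ along an arc from an adjacent barrier vertex (an arc $g-a\to g$ with $g-a$ in a barrier exists because $A$ generates and displacements are at most $Q$), or the robber confined to $g$ is caught in one round.
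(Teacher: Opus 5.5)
Your proposal is correct and follows essentially the paper's proof. It uses the same split into $m=2Q$ (occupy every residue), $m=2Q+1$ (leave one hole $g$ and move the cop at $g-a$ along a nonzero $a\in A$), and $m\geq 2Q+2$; in the last case one barrier of $Q$ consecutive occupied residues stays fixed while the other is translated into the robber's gap by a nonzero generator, with the direction chosen by the sign of its least-absolute-value representative, so displacement at most $Q$ prevents crossing and the gap shrinks every round. The differences are only cosmetic: you place the two barriers adjacently, leaving a single gap where the paper uses two separated barriers and two gaps, and your sweep would already cover the case $m=2Q+1$.
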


\begin{proof}
Since $A$ generates the nontrivial group $\Z_m$, it contains a nonzero
residue, so $1\leq Q\leq\lfloor m/2\rfloor$.
We first suppose $m\geq2Q+2$ and choose two disjoint cyclic intervals
$I_L,I_R\subseteq\Z_m$, each consisting of $Q$ consecutive residues, such
that the complement has two nonempty interval components.  The cops occupy
the residues of $I_L\cup I_R$, one cop at each residue.  After the robber chooses its initial vertex,
it lies in one of the two complementary gaps (Figure~\ref{fig:cyclic-sweep});
we call this gap $J$ and label the barriers so that, in the positive cyclic
order on $\Z_m$, $I_L$ lies immediately before $J$ and $I_R$ immediately after it.

We choose $a_*\in A$ with $a_*\neq0$ and let $\widetilde a_*$ be its
least-absolute-value integer representative. If $\widetilde a_*>0$, we move
every cop in the left barrier by the $a_*$-arc and keep the right barrier
fixed. If $\widetilde a_*<0$, we do the same with the right barrier. The moving
barrier remains an interval of $Q$ occupied residues and advances
$|a_*|_m$ residues into $J$. If it reaches the robber, a cop lands on the
robber's vertex. More explicitly, if $|J|\leq|a_*|_m$, the translated barrier
covers all of $J$, because $|a_*|_m\leq Q$. Otherwise an uncaptured robber
remains in a gap of $|J|-|a_*|_m$ residues.

Every arc changes the cyclic coordinate by distance at most $Q$.  A single
step cannot pass from one side of an interval of $Q$ consecutive occupied
residues to the other without landing in the interval.  Hence a directed path
avoiding the cops cannot cross either barrier, so the robber remains in the
shrinking gap.  Each round either captures the robber or removes
$|a_*|_m\geq1$ residues from its gap, so the robber is captured after at
most $|J|$ rounds.

If $m=2Q+1$, we occupy every residue except one, using $2Q$ cops.  If the robber
starts at the empty residue $x$, we choose a nonzero $a\in A$; the cop at $x-a$
moves along the $a$-arc to $x$ on the first turn.  If $m=2Q$, then $2Q=m$ and
one cop at every residue suffices.  These cases exhaust all possibilities.
\end{proof}

The cyclic sweep of \cite[Proposition~4.6]{Biswas2026}, together with
Schreier lifting, gives the following bound for arbitrary finite groups.

\begin{proposition}[Cyclic-quotient sweep]\label{prop:cyclic-sweep}
Let $\Gamma^+=\Cay\left(G,S\right)$ be a strongly connected Cayley digraph on $n$ vertices,
where $G$ is an arbitrary finite group.  Suppose that
$\pi:G\to\Z_m$, where $m\geq2$, is a surjective homomorphism, and put
$Q:=Q_\pi(S)$ as in Section~\ref{sec:cayley-notation}.
Then $1\leq Q\leq\lfloor m/2\rfloor$ and
\begin{equation}\label{eq:cyclic-sweep}
 c_{1,\infty}\left(\Gamma^+\right)\leq\frac{2Qn}{m}.
\end{equation}
\end{proposition}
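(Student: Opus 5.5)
The plan is to combine the Cyclic barrier lemma (Lemma~\ref{lem:cyclic-barrier}) with Schreier lifting (Proposition~\ref{prop:quotient-lifting}), taking $K:=\ker\pi$. The kernel is normal, so by Remark~\ref{rem:nonnormal-lift} the Schreier digraph $\Sch(K\backslash G,S)$ is the quotient Cayley digraph. Surjectivity of $\pi$ gives $G/K\cong\Z_m$ and $|K|=n/m$. Under this identification the coset $Kx$ corresponds to $\pi(x)$, and the arc $Kx\to Kxs$ corresponds to $\pi(x)\to\pi(x)+\pi(s)$. Hence, after suppressing loops and parallel arcs, $\Sch(K\backslash G,S)$ is exactly $\Cay(\Z_m,A)$ with $A:=\pi(S)\subseteq\Z_m$. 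Here $A$ may contain $0$, coming from generators in $K$, and Lemma~\ref{lem:cyclic-barrier} explicitly allows this.

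Next I check the hypotheses of Lemma~\ref{lem:cyclic-barrier}. The digraph $\Gamma^+$ is strongly connected, so $S$ generates $G$. Since $\pi$ is surjective, $A=\pi(S)$ then generates $\pi(G)=\Z_m$, and $m\geq2$ is given. By definition, $Q(A)=\max_{a\in\pi(S)}|a|_m=\max_{s\in S}|\pi(s)|_m=Q_\pi(S)=Q$. The lemma therefore yields both $1\leq Q\leq\lfloor m/2\rfloor$ and $c_{1,\infty}(\Cay(\Z_m,A))\leq2Q$.

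Finally, \eqref{eq:quotient-lifting} gives $c_{1,\infty}(\Gamma^+)\leq|K|\cdot2Q=2Qn/m$, which is \eqref{eq:cyclic-sweep}.

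The only point needing care is the identification of the Schreier digraph with $\Cay(\Z_m,A)$. One must check that the coset-to-residue map $Kx\mapsto\pi(x)$ is a well-defined bijection and carries arcs to arcs in both directions. This follows since $Kx=Ky$ iff $xy^{-1}\in K$ iff $\pi(x)=\pi(y)$, and $\pi(xs)=\pi(x)+\pi(s)$. One must also confirm that suppressing loops, which come from $s\in K$, and duplicate arcs does not affect either result; both the lemma and Proposition~\ref{prop:quotient-lifting} already state this. I do not expect a genuine obstacle.
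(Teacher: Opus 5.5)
Your proposal is correct and follows the paper's proof essentially verbatim: take $K=\ker\pi$, identify $\Sch(K\backslash G,S)$ with $\Cay(\Z_m,\pi(S))$ (loops suppressed), apply Lemma~\ref{lem:cyclic-barrier} to get $1\leq Q\leq\lfloor m/2\rfloor$ and cop number at most $2Q$ on the quotient, and lift via Proposition~\ref{prop:quotient-lifting} with $|K|=n/m$. Your extra check that $Kx\mapsto\pi(x)$ is a well-defined arc-preserving bijection fills in a step the paper states without detail.
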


\begin{proof}
Since $S$ generates $G$ and $\pi$ is surjective, $\pi(S)$ generates $\Z_m$.
Let $K=\ker\pi$.  Under the identification $Kx\mapsto\pi\left(x\right)$, the Schreier
digraph $\Sch\left(K\backslash G,S\right)$ is the cyclic Cayley digraph generated by
$\pi\left(S\right)$, with redundant loops suppressed.  Lemma~\ref{lem:cyclic-barrier}
gives its cop number at most $2Q$.  Since $|K|=n/m$,
Proposition~\ref{prop:quotient-lifting} gives \eqref{eq:cyclic-sweep}.
\end{proof}

\begin{corollary}[Character upper bound]\label{cor:character-upper}
Let $\Gamma^+=\Cay\left(G,S\right)$ be a strongly connected abelian Cayley digraph of order
$n\geq2$.  Then
\begin{equation}\label{eq:character-upper}
 c_{1,\infty}\left(\Gamma^+\right)\leq2n\delta\left(\Gamma^+\right).
\end{equation}
\end{corollary}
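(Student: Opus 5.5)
The plan is to apply Proposition~\ref{prop:cyclic-sweep} to a cyclic quotient read off from a character that attains the minimum in \eqref{eq:def-delta}. Choose a nontrivial $\chi\in\widehat G$ with $\max_{s\in S}\normtor{\chi(s)}=\delta\left(\Gamma^+\right)$. Its image $\chi(G)$ is a finite nontrivial subgroup of $\R/\Z$, hence cyclic of the form $\frac1m\Z/\Z$ for some $m\geq2$. Multiplication by $m$ identifies $\frac1m\Z/\Z$ with $\Z_m$, so composing gives a surjective homomorphism $\pi:G\to\Z_m$ with $\chi(g)=\pi(g)/m \bmod 1$.

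The one point that needs care is the identity $Q_\pi(S)=m\,\delta\left(\Gamma^+\right)$. For $a\in\Z_m$ with integer representative $\widetilde a$, the real number $\widetilde a/m$ represents $\chi$ of any preimage. Its distance to the nearest integer is $\min_j|\widetilde a/m+j|=\frac1m\min_j|\widetilde a+jm|=|a|_m/m$. Hence $\normtor{\chi(s)}=|\pi(s)|_m/m$ for every $s\in S$, and maximizing over $S$ gives $Q_\pi(S)=m\,\delta\left(\Gamma^+\right)$.

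Since $\Gamma^+$ is strongly connected and $G$ is finite, $S$ generates $G$, so the hypotheses of Proposition~\ref{prop:cyclic-sweep} hold. That proposition gives
\[
c_{1,\infty}\left(\Gamma^+\right)\leq\frac{2Q_\pi(S)\,n}{m}=2n\,\delta\left(\Gamma^+\right),
\]
which is \eqref{eq:character-upper}. No step is a serious obstacle. The only things to check are the normalization of the character image and the fact that $\pi$ is surjective onto $\Z_m$ rather than onto a subgroup; the latter holds by construction, since $m$ is defined as the order of $\chi(G)$.
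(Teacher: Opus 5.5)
Your proposal is correct and matches the paper's proof. The paper also takes a minimizing nontrivial character, identifies its order-$m$ image with $\Z_m$ through $j/m+\Z\mapsto j$ to obtain $\pi$ with $Q_\pi(S)/m=\delta\left(\Gamma^+\right)$, and then applies Proposition~\ref{prop:cyclic-sweep}.
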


\begin{proof}
We choose a nontrivial character $\chi$ attaining the minimum in
\eqref{eq:def-delta}, and let its image have order $m$.  Nontriviality gives
$m\geq2$.  We identify this image with $\Z_m$ by sending $j/m+\Z$ to $j$;
under this identification, $\normtor{j/m}=|j|_m/m$.
This gives a surjective homomorphism
$\pi:G\to\Z_m$. If $Q$ is as in Proposition~\ref{prop:cyclic-sweep}, then
\[
 \delta\left(\Gamma^+\right)=\frac Qm.
\]
The result follows immediately from \eqref{eq:cyclic-sweep}.
\end{proof}

\subsection{A band cut from a character}

\begin{figure}[tbp]
\centering

\begin{tikzpicture}[x=.6cm,y=1.15cm]
\foreach \i in {0,...,15} {
 \def\col{figInk!8}
 \ifnum\i<8 \def\col{figTeal!25}\else
  \ifnum\i<10 \def\col{figOrange!30}\fi
  \ifnum\i>13 \def\col{figOrange!30}\fi
 \fi
 \filldraw[fill=\col,draw=white] (\i,0) rectangle (\i+1,1.6);
 \foreach \j in {1,2,3} \fill[figInk!55] (\i+.5,{.4*\j}) circle (1.3pt);
 \node[font=\scriptsize,text=figInk] at (\i+.5,-.25) {\i};
}
\draw[figInk!60,dashed,rounded corners=5pt] (0,.8) -- (-.6,.8) -- (-.6,2.1) -- (16.6,2.1) -- (16.6,.8) -- (16,.8);
\node[paper label,text=figTeal] at (4,-.85) {$A$: $L$ full fibres};
\node[paper label,text=figOrange!80!black] at (12,-.85) {at most $2Q$ boundary fibres};
\node[paper label] at (8,2.55) {character quotient $\chi\left(G\right)\cong\Z_m$};
\end{tikzpicture}

\caption{A character band with $m=16$, $L=8$ and $Q=2$. The set $A$ consists of the teal fibres; its boundary lies in the $2Q$ orange fibres. Dots represent equal-sized fibres.}
\label{fig:character-band}
\end{figure}

\begin{lemma}[Character band]\label{lem:character-band}
Let $\Gamma^+=\Cay\left(G,S\right)$ be a strongly connected abelian Cayley digraph of order $n\geq2$ and let
$\Gamma=U(\Gamma^+)$ be its underlying undirected graph.  Then
\begin{equation}\label{eq:band-bound}
 \iota_v\left(\Gamma\right)\leq6\delta\left(\Gamma^+\right).
\end{equation}
The constant $6$ may be replaced by $4m/\left(m-1\right)$ if a minimizing character has
odd image order $m$, and by $4$ if it has even image order.
\end{lemma}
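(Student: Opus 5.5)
Take a character $\chi$ attaining the minimum in \eqref{eq:def-delta}. Let $m$ be the order of its image, identified with $\Z_m$ as in the proof of Corollary~\ref{cor:character-upper}, and let $\pi:G\to\Z_m$ be the resulting surjection. Put $Q:=Q_\pi(S)$, so that $\delta(\Gamma^+)=Q/m$. Since $T=S\cup(-S)$ and $|{-a}|_m=|a|_m$, every edge of $\Gamma$ changes the $\pi$-coordinate by cyclic displacement at most $Q$. The fibres of $\pi$ all have size $n/m$.

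\emph{Band set.} Let $A$ be the union of the fibres over an interval $I=\{0,1,\dots,L-1\}\subseteq\Z_m$ of $L=\lfloor m/2\rfloor$ consecutive residues. Then $0<|A|=Ln/m\leq n/2$, so $A$ is admissible in the definition of $\iota_v(\Gamma)$. A vertex $w\notin A$ adjacent to $A$ has $\pi(w)$ within cyclic distance $Q$ of $I$. Hence $\pi(w)$ lies in one of the two flanking intervals $\{L,\dots,L+Q-1\}$ and $\{-Q,\dots,-1\}$, each of at most $Q$ residues. Therefore $|\partial_vA|\leq 2Q\,n/m$ and
\[
 \iota_v(\Gamma)\leq\frac{2Qn/m}{Ln/m}=\frac{2Q}{L}.
\]

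\emph{Constants.} If $m$ is even, $L=m/2$ and the bound is $4Q/m=4\delta$. If $m$ is odd, $L=(m-1)/2$ and the bound is $4Q/(m-1)=\frac{4m}{m-1}\delta$. For $m\geq3$ this is at most $6\delta$, since $4m/(m-1)\leq 6$ exactly when $m\geq3$. Also $m\geq2$, and the only odd case to check is $m\geq3$.

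\emph{Degenerate case.} The one thing to watch is that $2Q$ may exceed $m-L$, so the two flanking intervals overlap or wrap around. This causes no problem: the boundary is still contained in $V\setminus A$, and the counting bound $2Qn/m$ remains an upper bound either way. When $m=2$, the boundary is at most the single complementary fibre and the estimate $2Q/L$ still holds with $Q=1$, $L=1$. Finally, if $m=2Q+1$ one could improve the bound directly, but this is not needed. The main, though mild, obstacle is the bookkeeping showing the two flanking fibre intervals together have at most $2Q$ residues; the remaining steps are direct.
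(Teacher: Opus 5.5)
Your proposal is correct and follows essentially the same route as the paper: take the full preimage of $L=\lfloor m/2\rfloor$ consecutive residues under a minimizing character, observe that the vertex boundary lies in the fibres over at most $Q$ residues on each side, and read off the ratio $2Q/L$, which gives $4$ for even $m$ and $4m/(m-1)\leq 6$ for odd $m\geq3$. Your additional remark, that overlapping or wrap-around flanking intervals can only reduce the boundary count, is a harmless clarification of the paper's phrase ``at most $Q$ residues on each side''.
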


\begin{proof}
We choose a minimizing character, identify its image with $\Z_m$, and write
$Q=m\delta\left(\Gamma^+\right)$, using the identification $j/m+\Z\mapsto j$.
The character is nontrivial, so $m\geq2$ and $L:=\lfloor m/2\rfloor\geq1$.
Let $A$ be the full
preimage of $L$ consecutive residues of $\Z_m$
(Figure~\ref{fig:character-band}). Then
$0<|A|=Ln/m\leq n/2$, so $A$ is admissible in the definition of
$\iota_v\left(\Gamma\right)$ (see \eqref{eq:def-hplus}).  Every element of $T=S\cup(-S)$ changes the
quotient coordinate by cyclic distance at most $Q$.  Hence
$\partial_vA$ is contained in the fibres above at most $Q$
residues on each side of the interval.  Therefore
\[
 \frac{|\partial_vA|}{|A|}
 \leq\frac{2Q}{L}
 =\frac{2m}{\lfloor m/2\rfloor}\delta\left(\Gamma^+\right).
\]
The coefficient equals $4$ when $m$ is even and is
$4m/\left(m-1\right)\leq6$ when $m$ is odd.
\end{proof}

\begin{remark}[Optimization of the character parameter]\label{rem:NH}
The character-displacement parameter $\delta$ uses the specific generating
set. For some Cayley graphs, Corollary~\ref{cor:character-upper} gives a
cop-number bound of smaller order than the quotient bound
$c_{1,\infty}\leq2n/N_*$ of \cite[Theorem~1.2]{Biswas2026}; see also
\cite[Section~5.2]{Biswas2026}. For example, let $n\geq5$ and
$\Gamma=\Cay\left(\Z_n,\{\pm1,\pm2\}\right)$.  The character
$x\mapsto x/n$ gives $\delta\left(\Gamma\right)\leq2/n$, and hence
$c_{1,\infty}\left(\Gamma\right)\leq4$.  On the other hand, the parameter
$N_*$ of \cite[Theorem~1.2]{Biswas2026} equals
$\lfloor\left(n-1\right)^{1/2}\rfloor$, so the bound $2n/N_*$ has order
$n^{1/2}$.  An interval of $\lfloor n/2\rfloor$ consecutive vertices has at
most four external boundary vertices, so
$\iota_v\left(\Gamma\right)=O\left(1/n\right)$ and
$N_*\iota_v(\Gamma)=O(n^{-1/2})$.  Thus $N_*$ is not comparable with the
inverse isoperimetric scale $1/\iota_v(\Gamma)$, whereas $\delta(\Gamma)$ is
comparable with $\iota_v(\Gamma)$ by
Theorem~\ref{thm:expansion-spectral-characterization}.
\end{remark}

\section{Abelian Cayley digraphs: expansion, spectrum, and diameter}

Throughout this section, $\Gamma^+=\Cay\left(G,S\right)$ is a strongly
connected abelian Cayley digraph of order $n\geq3$ and out-degree $d$.
We use the notation $\Gamma=U(\Gamma^+)$, $\widehat d$, $h^+$, $\widehat h$,
and $\delta$ fixed in Section~\ref{sec:notation}.

\subsection{The spectral comparison and the Buser inequality}

\begin{proposition}[Character versus spectrum]\label{prop:character-spectrum}
With the notation above,
\begin{equation}\label{eq:character-spectrum}
 8\delta^2\leq\lambda_1\left(\Gamma\right)
 \leq2\pi^2\widehat d\,\delta^2.
\end{equation}
\end{proposition}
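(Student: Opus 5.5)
The plan is to diagonalize $L_\Gamma$ by characters. For an abelian Cayley graph $\Gamma=\Cay(G,T)$ with $T=-T$, each character $\chi$ gives the eigenvector $g\mapsto\exp(2\pi i\chi(g))$. Its eigenvalue is
\[
 \mu_\chi=\sum_{t\in T}\bigl(1-\cos(2\pi\chi(t))\bigr)=\sum_{t\in T}2\sin^2\bigl(\pi\normtor{\chi(t)}\bigr).
\]
These $n$ eigenvectors form a basis, so $\lambda_1(\Gamma)=\min_{\chi\neq0}\mu_\chi$. This is positive because $T$ generates $G$. One subtlety: $T$ was defined as $S\cup(-S)$, and $\Gamma$ is simple, so if some $s$ has order $2$ then $s=-s$ appears once in $T$. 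The formula still holds because we sum over the set $T$, and the loop issue does not arise since $0\notin S$. Both inequalities then reduce to the elementary bounds $4x^2\leq\sin^2(\pi x)\leq\pi^2x^2$ for $x\in[0,1/2]$; the lower one is Jordan's inequality $\sin(\pi x)\geq2x$.

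\emph{Upper bound.} Take $\chi$ attaining $\delta$. Then every $t\in T$ satisfies $\normtor{\chi(t)}\leq\delta$, since $\normtor{\chi(-s)}=\normtor{\chi(s)}$. Hence
\[
 \mu_\chi\leq\sum_{t\in T}2\pi^2\normtor{\chi(t)}^2\leq2\pi^2\widehat d\,\delta^2,
\]
and $\lambda_1\leq\mu_\chi$ gives the claim.

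\emph{Lower bound.} Take an arbitrary nonzero $\chi$. By the definition of $\delta$ there is $s\in S$ with $\normtor{\chi(s)}\geq\delta$. If $s\neq-s$, then both $s$ and $-s$ lie in $T$ and contribute equally. This gives
\[
 \mu_\chi\geq2\cdot2\sin^2(\pi\normtor{\chi(s)})\geq4\cdot4\delta^2=16\delta^2\geq8\delta^2,
\]
using monotonicity of $\sin^2(\pi x)$ on $[0,1/2]$. If $s=-s$, then $2\chi(s)=0$, so $\chi(s)=1/2$. That single term gives $2\sin^2(\pi/2)=2$. Since $\delta\leq1/2$, we have $8\delta^2\leq2$, so the bound still holds. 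Minimizing over $\chi$ gives $\lambda_1\geq8\delta^2$.

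The main point needing care is the lower bound's constant $8$. The plan is to use only one generator rather than all of $T$, and to handle the involution case where $s=-s$ contributes just one term. Everything else is the standard character computation.
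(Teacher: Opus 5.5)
Your proposal is correct and follows the paper's proof: you diagonalize $L_\Gamma$ by the characters, reduce both bounds to $2x\le\sin(\pi x)\le\pi x$ on $[0,1/2]$, use one generator with $\normtor{\chi(s)}\ge\delta$ for the lower bound, and use a $\delta$-minimizing character for the upper bound. Your case split on whether $s=-s$ is correct but unnecessary. A single summand already gives $2\sin^2(\pi\normtor{\chi(s)})\ge 8\delta^2$, so the constant $8$ holds in both cases without pairing $s$ with $-s$.
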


\begin{proof}
For $\chi\in\widehat G$, let $f_\chi(g)=\exp\left(2\pi i\chi(g)\right)$ be
the corresponding complex-valued character (Section~\ref{sec:notation}).
Since $f_\chi(g+t)=f_\chi(g)f_\chi(t)$ and $T=-T$, direct calculation gives
$L_{\Gamma}f_\chi=\lambda(\chi)f_\chi$, where
\begin{equation}\label{eq:character-eigenvalue}
 \lambda\left(\chi\right)=
 \sum_{t\in T}\left(1-\cos\left(2\pi\chi\left(t\right)\right)\right).
\end{equation}
The $|G|$ functions $f_\chi$ are pairwise orthogonal, so they form a basis of
eigenvectors for $L_{\Gamma}$.
Since $T$ generates $G$, the equality $\lambda\left(\chi\right)=0$ holds only for the
trivial character.  Hence
\[
 \lambda_1\left(\Gamma\right)
 =\min_{\substack{\chi\in\widehat G\\\chi\neq0}}\lambda\left(\chi\right).
\]
For $0\leq x\leq1/2$,
\begin{equation}\label{eq:cos-bounds}
 8x^2\leq1-\cos\left(2\pi x\right)\leq2\pi^2x^2.
\end{equation}
Both inequalities follow from $1-\cos(2\pi x)=2\sin^2(\pi x)$ and
$2x\leq\sin(\pi x)\leq\pi x$ on $[0,1/2]$.  Moreover,
$1-\cos\left(2\pi\theta\right)$ depends only on $\normtor{\theta}$ for
$\theta\in\R/\Z$, so \eqref{eq:cos-bounds} applies with
$x=\normtor{\theta}$.
For every nontrivial $\chi$, the definition \eqref{eq:def-delta} of $\delta$
gives some $s\in S$ with
$\normtor{\chi\left(s\right)}\geq\delta$.  The corresponding summand in
\eqref{eq:character-eigenvalue} and the left inequality of
\eqref{eq:cos-bounds} show that $\lambda\left(\chi\right)\geq8\delta^2$.  Minimizing over
nontrivial $\chi$ proves the first inequality in
\eqref{eq:character-spectrum}.

Conversely, we choose $\chi$ attaining the minimum in \eqref{eq:def-delta}.  Since
$T=S\cup\left(-S\right)$,
$\normtor{\chi\left(t\right)}\leq\delta$ for every $t\in T$.  The right inequality in
\eqref{eq:cos-bounds} gives
$\lambda\left(\chi\right)\leq2\pi^2\widehat d\delta^2$, and
$\lambda_1\left(\Gamma\right)\leq\lambda\left(\chi\right)$.
\end{proof}

Since $\Gamma$ is an undirected abelian Cayley graph, the Buser inequality of
Klartag--Kozma--Ralli--Tetali gives
\begin{equation}\label{eq:KKRT-buser}
 \lambda_1\left(\Gamma\right)\leq16h_E\left(\Gamma\right)^2
\end{equation}
\cite[Section~1.1 and Theorems~1.1 and~2.3]{KlartagKozmaRalliTetali}.
Here $\lambda_1$ is the spectral gap of the unnormalized Laplacian
$L_{\Gamma}=D_{\Gamma}-A_{\Gamma}$, and $h_E$ counts crossing edges once.

\begin{proposition}[Buser comparison]\label{prop:buser-comparison}
For the abelian Cayley graph $\Gamma$,
\begin{equation}\label{eq:buser-delta}
 \delta\leq\sqrt2\,\widehat d\,\widehat h
 \leq2\sqrt2\,d\,\widehat h.
\end{equation}
\end{proposition}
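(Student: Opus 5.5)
We chain three inequalities: the spectral lower bound of Proposition~\ref{prop:character-spectrum}, the Buser inequality \eqref{eq:KKRT-buser}, and an elementary comparison of edge and vertex boundaries in the $\widehat d$-regular graph $\Gamma$.

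First, the left inequality of \eqref{eq:character-spectrum} gives $8\delta^2\leq\lambda_1(\Gamma)$. Combining this with \eqref{eq:KKRT-buser} yields
\[
 8\delta^2\leq\lambda_1(\Gamma)\leq16h_E(\Gamma)^2,
\]
so $\delta\leq\sqrt2\,h_E(\Gamma)$.

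Next we bound $h_E$ by vertex expansion. Choose $A$ with $0<|A|\leq n/2$ attaining $\widehat h=|\partial_vA|/|A|$. Every edge of $E(A,V\setminus A)$ has its outer endpoint in $\partial_vA$. Since $\Gamma=\Cay(G,T)$ is $\widehat d$-regular, each vertex of $\partial_vA$ is the endpoint of at most $\widehat d$ crossing edges. Hence
\[
 |E(A,V\setminus A)|\leq\widehat d\,|\partial_vA|.
\]
Because $A$ is admissible in the definition of $h_E$, this gives $h_E(\Gamma)\leq\widehat d\,\widehat h$, and therefore $\delta\leq\sqrt2\,\widehat d\,\widehat h$.

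Finally, $\widehat d=|S\cup(-S)|\leq2|S|=2d$, which gives the second inequality in \eqref{eq:buser-delta}.

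The only point needing care is that \eqref{eq:KKRT-buser} applies with exactly the normalization used here: the unnormalized Laplacian, and crossing edges counted once. The paper states this explicitly, so the main weight of the argument rests on the cited Buser inequality, and the remaining steps are routine.
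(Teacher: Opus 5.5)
Your proposal is correct and is essentially the paper's own proof. You chain $8\delta^2\leq\lambda_1(\Gamma)\leq16h_E(\Gamma)^2$ with the boundary count $h_E(\Gamma)\leq\widehat d\,\widehat h$ in the $\widehat d$-regular graph $\Gamma$, take square roots, and finish with $\widehat d\leq2d$, exactly as the paper does.
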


\begin{proof}
Every vertex in the external vertex boundary of a set is incident with at most
$\widehat d$ crossing edges.  Hence
\[
 h_E\left(\Gamma\right)\leq\widehat d\,\widehat h.
\]
Applying \eqref{eq:KKRT-buser} and then the left inequality of
\eqref{eq:character-spectrum} gives
\[
 8\delta^2\leq\lambda_1\left(\Gamma\right)
 \leq16\widehat d^2\widehat h^2.
\]
Taking square roots proves the first inequality in
\eqref{eq:buser-delta}; the second follows from $\widehat d\leq2d$.
\end{proof}

\begin{theorem}[Expansion and spectral characterization]
\label{thm:expansion-spectral-characterization}
With the notation of this section,
\begin{gather}
 h^+\leq\widehat h\leq\left(d+1\right)h^+,
 \label{eq:h-comparison-summary}\\
 \widehat h\leq6\delta,
 \qquad
 \delta\leq\sqrt2\widehat d\,\widehat h,
 \label{eq:delta-h-summary}\\
 8\delta^2\leq\lambda_1\left(\Gamma\right)
 \leq2\pi^2\widehat d\delta^2,
 \label{eq:spectral-summary}
\end{gather}
and
\begin{equation}\label{eq:cop-explicit-chain}
 \frac{nh^+}{4\left(d+1\right)}
 <c_{1,\infty}\left(\Gamma^+\right)
 \leq2n\delta
 \leq2\sqrt2\widehat d\,n\widehat h
 \leq2\sqrt2\widehat d\left(d+1\right)nh^+.
\end{equation}
Consequently
\[
 c_{1,\infty}\left(\Gamma^+\right)=\Theta_d\left(nh^+\right)
 =\Theta_d\left(n\widehat h\right)
 =\Theta_d\left(n\delta\right)
 =\Theta_d\left(n\sqrt{\lambda_1\left(\Gamma\right)}\right).
\]
\end{theorem}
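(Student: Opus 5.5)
This theorem is an assembly of results already proved, so the plan is to cite each ingredient and then chain the inequalities. First, \eqref{eq:h-comparison-summary} follows from Lemma~\ref{lem:in-out-comparison}. By Remark~\ref{rem:cayley-eulerian}, $\Gamma^+=\Cay(G,S)$ is Eulerian with $D^-=D^+=d$, and its underlying graph is $\Gamma$. Hence \eqref{eq:in-out-comparison} gives $h^+\leq\widehat h\leq(d+1)h^+$.

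Next, \eqref{eq:delta-h-summary} is the conjunction of Lemma~\ref{lem:character-band} and the first inequality of Proposition~\ref{prop:buser-comparison}. Likewise, \eqref{eq:spectral-summary} is exactly Proposition~\ref{prop:character-spectrum}. No new argument is needed for either; I only need to check that the standing hypotheses match. These are $n\geq3$, abelian $G$, and $S$ generating $G$, and the same conditions give $n\geq2$, which the earlier statements require.

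For the chain \eqref{eq:cop-explicit-chain}, I use the following links in order:
\begin{itemize}
\item The strict lower bound is Theorem~\ref{thm:directed-lower} with $D^+=d$. This is valid because Cayley digraphs generated by $S$ are strongly connected and $n\geq3$.
\item The bound $c_{1,\infty}(\Gamma^+)\leq2n\delta$ is Corollary~\ref{cor:character-upper}.
\item Multiplying $\delta\leq\sqrt2\widehat d\,\widehat h$ by $2n$ gives the next link.
\item Substituting $\widehat h\leq(d+1)h^+$ from \eqref{eq:h-comparison-summary} gives the last link.
\end{itemize}

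The $\Theta_d$ conclusions then follow by reading off constants, using $\widehat d\leq2d$. From \eqref{eq:cop-explicit-chain}, $c_{1,\infty}$ lies between $nh^+/(4(d+1))$ and $4\sqrt2\,d(d+1)nh^+$. The same holds for $n\widehat h$ and $n\delta$ via \eqref{eq:h-comparison-summary} and \eqref{eq:delta-h-summary}. For $n\sqrt{\lambda_1}$, \eqref{eq:spectral-summary} gives $2\sqrt2\,\delta\leq\sqrt{\lambda_1}\leq\pi\sqrt{2\widehat d}\,\delta$, so $n\sqrt{\lambda_1}=\Theta_d(n\delta)$.

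There is no real obstacle here. The only care needed is in the bookkeeping: confirm that each constant is polynomial in $d$, and that the degree conventions agree. In particular, $D^+=d$ in Theorem~\ref{thm:directed-lower}, and $\widehat d=|T|$ is used as the degree of $\Gamma$. Note that $0\notin T$, since $S\subseteq G\setminus\{0\}$, so $\widehat d=|T|$ really is the degree of $\Gamma$.
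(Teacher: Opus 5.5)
Your proposal is correct and follows the paper's proof essentially verbatim. Both assemble the theorem by citing Lemma~\ref{lem:in-out-comparison} with $D^-=d$ (via Remark~\ref{rem:cayley-eulerian}), Lemma~\ref{lem:character-band}, Proposition~\ref{prop:buser-comparison}, Proposition~\ref{prop:character-spectrum}, Theorem~\ref{thm:directed-lower} and Corollary~\ref{cor:character-upper}, and then chaining the inequalities; your explicit constants for the $\Theta_d$ conclusions, obtained using $\widehat d\leq2d$, correctly spell out what the paper leaves implicit.
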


\begin{proof}
Equation \eqref{eq:h-comparison-summary} is
Lemma~\ref{lem:in-out-comparison}, applied with $D^-=d$
(Remark~\ref{rem:cayley-eulerian}), and
\eqref{eq:delta-h-summary} follows from Lemma~\ref{lem:character-band} and
Proposition~\ref{prop:buser-comparison}.  Equation
\eqref{eq:spectral-summary} is Proposition~\ref{prop:character-spectrum}.
The lower bound in \eqref{eq:cop-explicit-chain} is
Theorem~\ref{thm:directed-lower}, while the upper bound is
Corollary~\ref{cor:character-upper}.  The remaining inequalities follow from
\eqref{eq:delta-h-summary} and \eqref{eq:h-comparison-summary}.
\end{proof}

\subsection{Diameter and bounded doubling}

For fixed degree, the comparison of vertex expansion with reciprocal diameter
follows from \cite[Proposition~4.7 and Corollary~1.6]{BreuillardTointon},
using the degree-dependent comparison between edge and vertex expansion.
We give an elementary proof with the explicit upper bound $24d/R$.
It uses the fact that a Cayley graph of an abelian group generated by $d$
elements has a doubling constant depending only on $d$.

For $r\geq0$, let $B\left(r\right)$ be the radius-$r$ ball about $0$ in
$\Gamma$.

\begin{lemma}[Ball doubling]\label{lem:ball-doubling}
For every integer $r\geq1$,
\begin{equation}\label{eq:ball-doubling}
 |B\left(2r\right)|\leq5^d|B\left(r\right)|.
\end{equation}
\end{lemma}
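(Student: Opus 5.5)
The plan is to describe balls in $\Gamma=\Cay(G,T)$ through lattice points. Write $S=\{s_1,\ldots,s_d\}$ and consider the surjective homomorphism
\[
 \phi:\Z^d\to G,\qquad z\mapsto\sum_i z_is_i .
\]
Every element of $T=S\cup(-S)$ is $\phi(\pm e_i)$. Since $G$ is abelian, a word of length at most $r$ in $T$ can be reordered. Hence
\[
 B(r)=\phi\bigl(P(r)\bigr),\qquad P(r):=\{z\in\Z^d:\|z\|_1\leq r\}.
\]
This holds because the distance from $0$ to $g$ is the minimum of $\|z\|_1$ over $z\in\phi^{-1}(g)$. Generators $s$ with $-s\in S$, or repeated images, cause no harm: the identity $B(r)=\phi(P(r))$ remains valid.

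The key step is a covering argument in $\Z^d$. I will show that $P(2r)$ is covered by at most $5^d$ translates $P(r)+w$, with $w$ ranging over some set $W\subseteq P(2r)$. Applying $\phi$ then gives
\[
 B(2r)\subseteq\bigcup_{w\in W}\bigl(\phi(w)+B(r)\bigr),
\]
and therefore $|B(2r)|\leq|W|\,|B(r)|\leq5^d|B(r)|$. Translation invariance of $|B(r)|$ holds because $G$ acts on itself by automorphisms of $\Gamma$.

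To get the covering, I use a standard maximal packing argument in the metric $\|\cdot\|_1$ on $\Z^d$, but with the counting done in $G$ so that no lattice-volume estimates are needed. First choose a maximal set $W\subseteq B(2r)$ whose points are pairwise at $\Gamma$-distance greater than $r$; this is done directly in $G$, with $w_j\in G$. By maximality, the balls $w_j+B(r)$ cover $B(2r)$. On the other hand, the balls $w_j+B(\lfloor r/2\rfloor)$ are pairwise disjoint and all lie in $B(2r+r/2)\subseteq B(3r)$. This gives
\[
 |W|\,|B(\lfloor r/2\rfloor)|\leq|B(3r)|,
\]
but that bound compares balls in the wrong direction.

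So the comparison of ball sizes is the main obstacle, and I would handle it with lattice point counts in $\Z^d$. The covering should be done in $\Z^d$ itself, where $|P(2r)|/|P(r/2)|$ can be bounded explicitly. The points $w\in W\subseteq P(2r)$ are chosen maximal with pairwise $\ell_1$-distance greater than $r$. The disjoint half-balls $w+P(\lfloor r/2\rfloor)$ lie inside $P(\lfloor 5r/2\rfloor)$, so
\[
 |W|\leq\frac{|P(\lfloor 5r/2\rfloor)|}{|P(\lfloor r/2\rfloor)|}.
\]
It remains to bound this ratio by $5^d$ using the explicit formula
\[
 |P(r)|=\sum_k 2^k\binom dk\binom rk .
\]
Termwise, $\binom{\lfloor 5r/2\rfloor}{k}\leq5^k\binom{\lfloor r/2\rfloor}{k}$ fails when $r=1$, since then $\lfloor r/2\rfloor=0$. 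I therefore expect to need care with the case of small $r$. For $r=1$ there is a direct check: $P(2)$ is covered by the $2d+1\leq 5^d$ translates $P(1)+w$, $w\in P(1)$. For $r\geq2$, the ratio of binomial coefficients can be handled directly, and alternatively packing with radius $\lceil r/2\rceil$ avoids the floor issue.
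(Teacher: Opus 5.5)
Your reduction is sound and matches the paper's: $B(r)=\phi(P(r))$, and a maximal set $W\subseteq P(2r)$ with pairwise $\ell^1$-distances greater than $r$ gives $B(2r)\subseteq\bigcup_{w\in W}(\phi(w)+B(r))$. Everything then hinges on $|W|\le5^d$.

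\textbf{The gap.} You never establish $|W|\le 5^d$ for $r\geq2$, and neither route you suggest works.

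\emph{Termwise comparison.} Its failure is not confined to $r=1$. Put $q=\lfloor r/2\rfloor$. For $2\le k\le q$,
$\binom{\lfloor5r/2\rfloor}{k}\ge\binom{5q}{k}=\binom qk\prod_{i=0}^{k-1}\frac{5q-i}{q-i}>5^k\binom qk$,
and for $k>q$ the right-hand term is $0$ while the left-hand term need not be. For instance, $d=2$, $r=4$ gives $\binom{10}{2}=45>25=5^2\binom22$.

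\emph{The aggregate inequality.} $|P(\lfloor5r/2\rfloor)|\le5^d|P(\lfloor r/2\rfloor)|$ is in fact true (for $d=2$, $r=4$ it reads $221\le325$), but only because lower-order terms compensate. Proving it along your lines would need something like the fact that $|P(R)|$ is a polynomial in $R+\tfrac12$ with nonnegative coefficients. That fact holds because the roots of the cross-polytope's Ehrhart polynomial lie on the line of real part $-\tfrac12$, which is far more than this lemma requires.

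\emph{Packing with radius $\lceil r/2\rceil$.} This fails for odd $r$. A maximal $W$ may contain two points at $\ell^1$-distance exactly $r+1$. The lattice point $(r+1)/2$ steps along an $\ell^1$-geodesic lattice path between them then lies in both packing balls, so the balls are not disjoint.

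\textbf{The missing idea.} Compare Lebesgue volumes rather than lattice-point counts, as the paper does. Distinct points of $W$ are more than $r$ apart in $\ell^1$. Hence the real balls $w+(r/2)B_1^d$, $w\in W$, have disjoint interiors, where $B_1^d\subseteq\R^d$ is the closed unit $\ell^1$-ball. They all lie in $(5r/2)B_1^d$ because $W\subseteq P(2r)$. Homogeneity of volume gives $|W|\,(r/2)^d\le(5r/2)^d$, i.e.\ $|W|\le5^d$, uniformly in $r\geq1$, with no floors and no small-$r$ case analysis. With this replacement, the rest of your argument goes through unchanged.
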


\begin{proof}
We enumerate $S=\{s_1,\ldots,s_d\}$ and consider the surjective homomorphism
\[
 \phi:\Z^d\longrightarrow G,
 \qquad
 \phi\left(z_1,\ldots,z_d\right)=\sum_{i=1}^d z_i s_i.
\]
If
\[
 C_r:=\{z\in\Z^d:\|z\|_1\leq r\},
\]
then $B\left(r\right)=\phi\left(C_r\right)$.

We choose a maximal set $X\subseteq C_{2r}$ whose distinct elements have
$\ell^1$-distance greater than $r$.  Maximality implies
\[
 C_{2r}\subseteq\bigcup_{x\in X}\left(x+C_r\right).
\]
The interiors of the continuous $\ell^1$-balls
$x+\left(r/2\right)B_1^d$, $x\in X$, are pairwise disjoint, and all are contained in
$\left(5r/2\right)B_1^d$.  Comparing volumes gives $|X|\leq5^d$.  Applying $\phi$ to the
cover above shows that $B\left(2r\right)$ is covered by at most $5^d$ translates of
$B\left(r\right)$, proving \eqref{eq:ball-doubling}.
\end{proof}

\begin{proposition}[Expansion versus diameter]\label{prop:expansion-diameter}
Let $R=\diam\left(\Gamma\right)$.  Then
\begin{equation}\label{eq:diameter-expansion-explicit}
 \frac{2}{2R+1}\leq\widehat h
 \leq\frac{24d}{R}.
\end{equation}
In particular, $\widehat h=\Theta_d\left(1/R\right)$.
\end{proposition}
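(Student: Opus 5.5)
The lower bound $2/(2R+1)\leq\widehat h$ is exactly \eqref{eq:babai-szegedy-half} of Lemma~\ref{lem:babai-szegedy}: $\Gamma$ is a connected Cayley graph, hence vertex-transitive, of order $n\geq3$ and diameter $R$. So the work is in the upper bound $\widehat h\leq 24d/R$. I will construct a ball $A=B(r)$ about $0$ with $|B(r)|\leq n/2$ and $|\partial_v B(r)|\leq(24d/R)\,|B(r)|$. Since $\partial_v B(r)=B(r+1)\setminus B(r)$ in the undirected Cayley graph $\Gamma$, the task reduces to finding a radius where consecutive balls grow slowly. Note that $\Gamma$ has degree $\widehat d\leq2d$ and the balls are those of Lemma~\ref{lem:ball-doubling}.

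First I dispose of small diameters. If $24d/R\geq2$, i.e.\ $R\leq12d$, the bound follows from $\widehat h\leq2$ in \eqref{eq:hplus-at-most-two} (applied to $\Gamma$ as a symmetric digraph). So assume $R>12d$.

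Next I pick a scale. Let $r_0$ be the largest integer with $|B(r_0)|\leq n/2$. Vertex-transitivity gives $B(r_0)+B(r_0+1)\ni$? More precisely, the standard argument shows $2r_0+2\geq R$: if $|B(r)|>n/2$ then two translates of $B(r)$ intersect, so $B(2r)=G$ and $2r\geq R$. Hence $r_0\geq R/2-1$, which is of order $R$ because $R>12d$.

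I then pigeonhole on the interval $[\lfloor r_0/2\rfloor,r_0]$. Set $r_1=\lfloor r_0/2\rfloor\geq1$. Lemma~\ref{lem:ball-doubling} gives
\[
|B(r_0)|\leq|B(2r_1+1)|\leq 5^d|B(r_1+1)|\leq\cdots,
\]
so the ratio $|B(r_0)|/|B(r_1)|$ is at most roughly $5^{d}$ (up to one extra step, or $25^d$ at worst). Writing
\[
\frac{|B(r_0)|}{|B(r_1)|}=\prod_{r=r_1}^{r_0-1}\frac{|B(r+1)|}{|B(r)|},
\]
some factor satisfies $|B(r+1)|/|B(r)|\leq 5^{c\,d/(r_0-r_1)}$. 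Taking logarithms gives
\[
\frac{|\partial_vB(r)|}{|B(r)|}\leq e^{c d\ln5/(r_0-r_1)}-1\lesssim \frac{c\,d\ln 5}{r_0-r_1}.
\]
Here $r_0-r_1\geq r_0/2\gtrsim R/4$. Since $B(r)\subseteq B(r_0)$, this $A=B(r)$ is admissible.

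The main obstacle is the constant bookkeeping needed to land exactly on $24d/R$. This includes handling the exponential $e^x-1\leq$ linear estimate, which requires the exponent to be bounded (guaranteed by $R>12d$), and the off-by-one losses in $r_0\geq R/2-1$ and $r_1=\lfloor r_0/2\rfloor$. If the direct constants overshoot, I would instead apply doubling only once, from $r_1$ to $2r_1\leq r_0$, so the ratio is at most $5^d$ and the exponent is $d\ln5/(r_0-r_1)$. I would then use $e^x-1\leq 2x$ for $x\leq1$ together with $\ln5<2$, giving roughly $4d/(R/4-O(1))$, which fits under $24d/R$ once $R>12d$. Finally, $\widehat h=\Theta_d(1/R)$ follows immediately from the two sides.
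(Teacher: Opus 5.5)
Your proof is correct and follows essentially the paper's route. The paper takes $t=\lfloor (R-1)/4\rfloor$ and gets $|B(2t)|\le n/2$ from disjointness of $B(2t)$ and $g+B(2t)$ for a vertex $g$ at distance $R$. It then bounds each of the $t$ growth factors below by $1+\widehat h$ and linearizes via $\log(1+\widehat h)\ge\frac{\log 3}{2}\widehat h$ using $\widehat h\le 2$; this is the contrapositive of your pigeonhole with $e^x-1\le 2x$.

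There is one small bookkeeping slip. Doubling from $r_1$ to $2r_1$ controls only $r_1$ factors, so the exponent should be $d\ln 5/r_1$ rather than $d\ln 5/(r_0-r_1)$; the two differ when $r_0$ is odd. This is harmless: $r_1\ge R/4-1$ still gives $\widehat h<16d/(R-4)\le 24d/R$ for $R\ge 12$.
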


\begin{proof}
The graph $\Gamma$ is connected and has $n\geq3$ vertices, so $R\geq1$
and $\widehat h>0$.  The lower bound is \eqref{eq:babai-szegedy-half} in
Lemma~\ref{lem:babai-szegedy}, applied to the vertex-transitive graph
$\Gamma$.

For the upper bound, we first assume $R\geq5$ and set
\[
 t:=\left\lfloor\frac{R-1}{4}\right\rfloor.
\]
Then $t\geq1$, $4t<R$, and $t\geq R/8$.  We choose $g\in G$ with
$\operatorname{dist}_{\Gamma}\left(0,g\right)=R$.  The balls $B\left(2t\right)$ and
$g+B\left(2t\right)$ are disjoint: an intersection would imply
$g\in B\left(2t\right)-B\left(2t\right)\subseteq B\left(4t\right)$, contrary to $4t<R$.  Hence
\begin{equation}\label{eq:half-ball}
 |B\left(2t\right)|\leq n/2.
\end{equation}
For every $j\in\{t,\ldots,2t-1\}$, the ball $B(j)$ contains $0$ and
satisfies $|B(j)|\leq|B(2t)|\leq n/2$, so it is admissible in the
definition of $\widehat h$, and $|\partial_v B(j)|\geq\widehat h|B(j)|$.
Since $\partial_v B(j)\subseteq B(j+1)\setminus B(j)$, it follows that
$|B(j+1)|\geq(1+\widehat h)|B(j)|$. Multiplying these inequalities over $j$
and applying Lemma~\ref{lem:ball-doubling} with $r=t$ gives
\[
 (1+\widehat h)^t\leq
 \prod_{j=t}^{2t-1}\frac{|B(j+1)|}{|B(j)|}
 =\frac{|B(2t)|}{|B(t)|}\leq5^d.
\]
Thus $t\log\left(1+\widehat h\right)\leq d\log5$.  The estimate
$0<\widehat h\leq2$, which is \eqref{eq:hplus-at-most-two} applied to
$\Gamma$ viewed as a symmetric digraph, and concavity of the
logarithm give
\[
 \log\left(1+\widehat h\right)\geq\frac{\log3}{2}\widehat h.
\]
Consequently
\[
 \widehat h\leq\frac{2d\log5}{t\log3}
 \leq\frac{16\log5}{\log3}\frac dR
 <\frac{24d}{R},
\]
since $25<27$.

If $R\leq4$, the same displayed bound follows from the estimate
$\widehat h\leq2$, since $24d/R\geq6$.
\end{proof}

Combining the preceding results gives the following comparisons.

\begin{theorem}[Abelian characterization]\label{thm:abelian-characterization}
Let $\Gamma^+=\Cay\left(G,S\right)$ be a strongly connected abelian Cayley digraph on
$n\geq3$ vertices, with $|S|=d$.  Then
\[
 \begin{split}
 c_{1,\infty}\left(\Gamma^+\right)
 &=\Theta_d\left(n\iota_v^+\left(\Gamma^+\right)\right)
 =\Theta_d\left(n\iota_v\left(\Gamma\right)\right)
 =\Theta_d\left(n\delta\left(\Gamma^+\right)\right)\\
 &=\Theta_d\left(n\sqrt{\lambda_1\left(\Gamma\right)}\right)
 =\Theta_d\left(\frac{n}{\diam\left(\Gamma\right)}\right).
 \end{split}
\]
\end{theorem}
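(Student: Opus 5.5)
This follows by chaining the explicit inequalities already proved, so the plan is bookkeeping. First, Theorem~\ref{thm:expansion-spectral-characterization} gives, through \eqref{eq:cop-explicit-chain},
\[
\frac{n h^+}{4(d+1)}<c_{1,\infty}(\Gamma^+)\leq 2n\delta\leq 2\sqrt2\,\widehat d\,n\widehat h\leq 2\sqrt2\,\widehat d(d+1)\,nh^+ .
\]
With $\widehat d\leq 2d$, this yields $c_{1,\infty}(\Gamma^+)=\Theta_d(nh^+)$ with polynomial constants. Inequality \eqref{eq:h-comparison-summary} ($h^+\leq\widehat h\leq(d+1)h^+$) transfers this to $\Theta_d(n\widehat h)$. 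The two bounds in \eqref{eq:delta-h-summary} ($\widehat h\leq6\delta$ and $\delta\leq\sqrt2\widehat d\widehat h$) give $\Theta_d(n\delta)$. Finally, \eqref{eq:spectral-summary} gives $2\sqrt2\,\delta\leq\sqrt{\lambda_1(\Gamma)}\leq\pi\sqrt{2\widehat d}\,\delta$, hence $\Theta_d(n\sqrt{\lambda_1(\Gamma)})$.

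For the diameter, I invoke Proposition~\ref{prop:expansion-diameter}: with $R=\diam(\Gamma)$,
\[
\frac{2}{2R+1}\leq\widehat h\leq\frac{24d}{R}.
\]
Since $R\geq1$, we have $2R+1\leq3R$, so $\widehat h\geq 2/(3R)$. Thus $n\widehat h$ lies between $2n/(3R)$ and $24dn/R$, giving $\Theta_d(n/\diam(\Gamma))$. Combining all of these with the first chain proves the theorem. Every constant is explicit and polynomial in $d$, which matches the claim in the introduction.

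The only potential subtlety is checking the hypotheses. Section-wide assumptions require $n\geq3$ and strong connectivity; these are given. The Cayley digraph is Eulerian with $D^-=d$ by Remark~\ref{rem:cayley-eulerian}, which is needed for \eqref{eq:h-comparison-summary}. All the cited results are stated under exactly these hypotheses, so there is no genuine obstacle. The main care point is simply making sure the $\Theta_d$ constants compose correctly, meaning that each lower and upper constant depends only on $d$ and not on $n$ or $G$.
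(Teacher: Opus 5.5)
Your proposal is correct and follows the paper's proof: the first four comparisons are read off from Theorem~\ref{thm:expansion-spectral-characterization}, and the diameter comparison comes from Proposition~\ref{prop:expansion-diameter}. Your explicit constants, $2\sqrt2\,\delta\leq\sqrt{\lambda_1(\Gamma)}\leq\pi\sqrt{2\widehat d}\,\delta$ and $2/(3R)\leq\widehat h\leq 24d/R$, are just the bookkeeping the paper leaves implicit.
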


\begin{proof}
The first four comparisons are Theorem
\ref{thm:expansion-spectral-characterization}; the final one follows from
Proposition~\ref{prop:expansion-diameter}.
\end{proof}

\begin{corollary}[Quantitative cop-number bounds in terms of diameter]
\label{cor:cop-diameter}
Let $\Gamma^+=\Cay\left(G,S\right)$ be a strongly connected abelian Cayley
digraph of order $n\geq3$ and out-degree $d$.  Put
$R=\diam\left(\Gamma\right)$.  Then
\[
 \frac{n}{2\left(d+1\right)^2\left(2R+1\right)}
 <c_{1,\infty}\left(\Gamma^+\right)
 \leq96\sqrt2\,d^2\frac nR.
\]
\end{corollary}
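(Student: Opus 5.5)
The plan is to chain the explicit inequalities already collected in Theorem~\ref{thm:expansion-spectral-characterization} with the two-sided diameter estimate of Proposition~\ref{prop:expansion-diameter}, tracking constants at each step. Throughout, $h^+=\iota_v^+(\Gamma^+)$, $\widehat h=\iota_v(\Gamma)$, $\widehat d=|T|\leq2d$, and $R=\diam(\Gamma)$.

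For the lower bound, start from Theorem~\ref{thm:directed-lower}. Since $\Gamma^+$ is a Cayley digraph, Remark~\ref{rem:cayley-eulerian} gives $D^+=d$, so
\[
 c_{1,\infty}(\Gamma^+)>\frac{nh^+}{4(d+1)}.
\]
Next, bound $h^+$ below by $\widehat h$. The Eulerian comparison \eqref{eq:in-out-comparison}, applied with $D^-=d$, gives $h^+\geq\widehat h/(d+1)$. Then the Babai--Szegedy bound, which is the left inequality in \eqref{eq:diameter-expansion-explicit}, gives $\widehat h\geq2/(2R+1)$. Substituting both estimates yields
\[
 c_{1,\infty}(\Gamma^+)>\frac{n}{4(d+1)}\cdot\frac{2}{(d+1)(2R+1)}=\frac{n}{2(d+1)^2(2R+1)},
\]
with the strict inequality inherited from Theorem~\ref{thm:directed-lower}.

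For the upper bound, use the chain \eqref{eq:cop-explicit-chain} up to its third term: $c_{1,\infty}(\Gamma^+)\leq2\sqrt2\,\widehat d\,n\widehat h$. Then insert $\widehat d\leq2d$ and the right inequality $\widehat h\leq24d/R$ of Proposition~\ref{prop:expansion-diameter}. This gives
\[
 c_{1,\infty}(\Gamma^+)\leq2\sqrt2\cdot2d\cdot\frac{24d}{R}\,n=96\sqrt2\,d^2\frac nR.
\]

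There is no real obstacle here, since every ingredient has already been proved. The only care needed is bookkeeping. The hypotheses $n\geq3$ and strong connectivity must be in place so that Theorem~\ref{thm:directed-lower} and Proposition~\ref{prop:expansion-diameter} both apply. Also, the lower bound should pass through $\widehat h$ via \eqref{eq:in-out-comparison}, rather than through $\delta$, so as to obtain exactly the factor $(d+1)^2$.
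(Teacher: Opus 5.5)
Your proposal is correct and follows the paper's proof essentially step for step. For the lower bound you combine Theorem~\ref{thm:directed-lower} with $D^+=d$, the Eulerian comparison $h^+\geq\widehat h/(d+1)$, and the Babai--Szegedy bound $\widehat h\geq2/(2R+1)$. For the upper bound you use $c_{1,\infty}(\Gamma^+)\leq2\sqrt2\,\widehat d\,n\widehat h$ with $\widehat d\leq2d$ and $\widehat h\leq24d/R$, and the constants check out.
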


\begin{proof}
Lemma~\ref{lem:in-out-comparison}, applied with $D^-=d$
(Remark~\ref{rem:cayley-eulerian}), and
Proposition~\ref{prop:expansion-diameter} give
\[
 h^+\geq\frac{\widehat h}{d+1}
 \geq\frac{2}{\left(d+1\right)\left(2R+1\right)}.
\]
We substitute this into Theorem~\ref{thm:directed-lower} for the lower bound.
For the upper bound, \eqref{eq:cop-explicit-chain} gives
$c_{1,\infty}\left(\Gamma^+\right)\leq2\sqrt2\,\widehat d\,n\widehat h$;
we use $\widehat d\leq2d$ and $\widehat h\leq24d/R$ from
Proposition~\ref{prop:expansion-diameter}.
\end{proof}

\begin{remark}[Dependence on the degree]\label{rem:degree-dependence}
The displayed bounds give polynomial dependence on $d$ in every comparison
in Theorem~\ref{thm:abelian-characterization}.
In particular, using $\widehat d\leq2d$ in
\eqref{eq:cop-explicit-chain}, the ratio of the upper and lower
coefficients of $nh^+$ is at most
$16\sqrt2\,d\left(d+1\right)^2=O\left(d^3\right)$.
\end{remark}

\section{The undirected case and width parameters}
\label{sec:width}

\begin{figure}[tbp]
\centering

\begin{tikzpicture}[x=.82cm,y=.57cm]
\foreach \i in {0,...,9}
 \node[font=\small,text=figInk] at (\i,.9) {$F_{\i}$};
\draw[figInk!25,line width=.6pt] (-.48,.4) -- (9.48,.4);
\foreach \j in {2,...,9} {
 \pgfmathsetmacro{\yy}{2-\j}
 \node[paper label,anchor=east] at (-.7,\yy) {$X_{\j}$};
 \foreach \i in {0,...,9}
  \filldraw[fill=figInk!3,draw=figInk!12,rounded corners=1.2pt,line width=.4pt]
   (\i-.43,\yy-.36) rectangle (\i+.43,\yy+.36);
 \foreach \i in {0,1}
  \filldraw[fill=figOrange!24,draw=figOrange!65,rounded corners=1.2pt,line width=.55pt]
   (\i-.43,\yy-.36) rectangle (\i+.43,\yy+.36);
 \pgfmathtruncatemacro{\start}{max(2,\j-2)}
 \foreach \i in {\start,...,\j}
  \filldraw[fill=figTeal!18,draw=figTeal!65,rounded corners=1.2pt,line width=.55pt]
   (\i-.43,\yy-.36) rectangle (\i+.43,\yy+.36);
}
\draw[figInk!25,line width=.6pt] (-.48,-7.5) -- (9.48,-7.5);
\filldraw[fill=figOrange!24,draw=figOrange!65,rounded corners=1pt]
 (-.4,-8.65) rectangle (-.05,-8.15);
\node[paper label,font=\footnotesize,anchor=west] at (.1,-8.4) {permanent fibres};
\filldraw[fill=figTeal!18,draw=figTeal!65,rounded corners=1pt]
 (3.65,-8.65) rectangle (4,-8.15);
\node[paper label,font=\footnotesize,anchor=west] at (4.15,-8.4)
 {moving window ($\leq Q+1$ fibres)};
\end{tikzpicture}

\caption{Path-decomposition bags for $m=10$ and $Q=2$. A coloured cell indicates that $F_i\subseteq X_j$. The orange fibres belong to every bag; the teal window contains at most $Q+1$ fibres. Each bag has at most $(2Q+1)n/m$ vertices.}
\label{fig:path-bags}
\end{figure}

In this section $\Gamma=\Cay\left(G,S\right)$ is a connected undirected abelian
Cayley graph of order $n\geq3$, so $S=-S$.  The character quotient also controls pathwidth.

\begin{proposition}[A cyclic-quotient path decomposition]
\label{prop:pathwidth-quotient}
Suppose $\pi:G\to\Z_m$ is a surjective homomorphism and every $s\in S$ has
cyclic displacement at most $Q$, where $1\leq Q\leq\lfloor m/2\rfloor$.  Then
\begin{equation}\label{eq:pathwidth-quotient}
 \pw\left(\Gamma\right)+1\leq\left(2Q+1\right)\frac nm
 \leq3Q\frac nm.
\end{equation}
Consequently
\begin{equation}\label{eq:pathwidth-delta}
 \pw\left(\Gamma\right)+1\leq3n\delta\left(\Gamma\right).
\end{equation}
\end{proposition}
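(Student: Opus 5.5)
We will build an explicit path decomposition whose bags are unions of fibres $F_i:=\pi^{-1}(\{i\})$, $i\in\Z_m$. Each fibre has size $n/m$, since $\pi$ is surjective and its fibres are cosets of $\ker\pi$. Identify $\Z_m$ with $\{0,1,\ldots,m-1\}$ in its natural linear order.

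Every edge $\{g,g+s\}$ joins $F_i$ to $F_j$ with $|j-i|_m\leq Q$. Cutting the cycle between $m-1$ and $0$ splits these edges into two kinds:
\begin{itemize}
\item \emph{short} edges, for which the integer representatives satisfy $|i-j|\leq Q$;
\item \emph{wrap-around} edges, for which one endpoint fibre has index in $\{0,\ldots,Q-1\}$ and the other in $\{m-Q,\ldots,m-1\}$.
\end{itemize}

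To handle the wrap-around edges, we keep the permanent block $P:=F_0\cup\cdots\cup F_{Q-1}$ in every bag. For $j=Q,\ldots,m-1$ we put
\[
 X_j:=P\cup\bigcup_{i=\max(Q,\,j-Q)}^{j}F_i,
\]
indexed along a path $X_Q,X_{Q+1},\ldots,X_{m-1}$. Each bag then consists of at most $Q+(Q+1)=2Q+1$ fibres, which gives $\pw(\Gamma)+1\leq(2Q+1)n/m$. The bound $(2Q+1)\leq 3Q$ follows from $Q\geq1$. The degenerate case where $Q$ is close to $m$ causes no trouble: if $m\leq 2Q+1$, the single bag $V(\Gamma)$ already has $m\leq 2Q+1$ fibres, so we may simply assume $m\geq 2Q+2$, or note that the construction still works.

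We then check the three axioms.
\begin{enumerate}
\item[(i)] Every fibre lies in some bag: the fibres in $P$ lie in all bags, and $F_i$ with $i\geq Q$ lies in $X_i$.
\item[(iii)] For connectivity, $P$ is in every bag, and $F_i$ with $i\geq Q$ lies in exactly the bags $X_j$ with $i\leq j\leq\min(i+Q,m-1)$, which form an interval.
\item[(ii)] Edges need the most care.
 \begin{itemize}
 \item An edge with an endpoint in $P$ is covered: its other endpoint lies in some $F_i$, and any bag containing $F_i$ also contains $P$.
 \item A short edge between $F_i$ and $F_j$ with $Q\leq i\leq j\leq i+Q$ lies in $X_j$.
 \item A wrap-around edge necessarily has an endpoint in $P$, so it is already covered.
 \end{itemize}
\end{enumerate}
The main obstacle is this bookkeeping of the wrap-around, namely making sure that any edge whose cyclic displacement is realised by crossing the cut between $m-1$ and $0$ touches $P$. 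This holds because such an edge joins $i\geq m-Q$ to $j\leq Q-1$.

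For \eqref{eq:pathwidth-delta}, choose a nontrivial character attaining $\delta(\Gamma)$ and identify its image with $\Z_m$, as in the proof of Corollary~\ref{cor:character-upper}. This gives a surjection $\pi$ with $Q=m\delta(\Gamma)$ satisfying $1\leq Q\leq\lfloor m/2\rfloor$. Applying \eqref{eq:pathwidth-quotient} then yields $\pw(\Gamma)+1\leq 3Qn/m=3n\delta(\Gamma)$.
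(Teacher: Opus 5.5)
Your proposal is correct and follows the paper's proof: the same permanent block $F_0\cup\cdots\cup F_{Q-1}$ in every bag, the same moving window $X_j$ for $j=Q,\ldots,m-1$, the same $(2Q+1)n/m$ size count, and the same choice of a $\delta$-attaining character for \eqref{eq:pathwidth-delta}. The only difference is organisational. You classify edges as short or wrap-around from the start, whereas the paper observes directly that residues $Q\leq u<v\leq m-1$ have wrap-around distance $m-v+u\geq Q+1$, so $v-u\leq Q$; both lead to the same covering argument.
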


\begin{proof}
We identify the quotient residues with $0,1,\ldots,m-1$, and let
$F_i=\pi^{-1}\left(i\right)$.  Every fibre has size $n/m$.  We keep the first $Q$ fibres in
every bag (Figure~\ref{fig:path-bags}). For $j=Q,Q+1,\ldots,m-1$, we define
\[
 X_j:=
 \bigcup_{i=0}^{Q-1}F_i
 \;\cup\!
 \bigcup_{i=\max\{Q,j-Q\}}^jF_i.
\]
Every vertex occurs in at least one bag.  Vertices in the first $Q$ fibres
occur in all bags, while a vertex of $F_i$ with $i\geq Q$ occurs in a
consecutive interval of bags.

We now check that every edge is covered.  An edge with an endpoint in one of
$F_0,\ldots,F_{Q-1}$ is covered because those fibres are included in every bag. If both
endpoints lie in the same fibre $F_i$ with $i\geq Q$, any bag containing that fibre
covers the edge.  If the two distinct
endpoint residues $u<v$ lie in $\{Q,\ldots,m-1\}$, their wrap-around distance
is $m-v+u\geq Q+1$.  Hence cyclic distance at most $Q$ implies
$v-u\leq Q$, and both fibres occur in $X_v$.  Thus $\left(X_j\right)$ is a path
decomposition.

Each bag contains the first $Q$ fibres and at most $Q+1$ additional fibres
in the consecutive interval defining the bag, so its size is at most
$\left(2Q+1\right)n/m$. This proves
\eqref{eq:pathwidth-quotient}.  We apply this bound to a character attaining $\delta$;
then $Q/m=\delta$, and $2Q+1\leq3Q$.
\end{proof}

Alon--Mehrabian proved that every connected undirected graph $\Gamma$ satisfies
\begin{equation}\label{eq:treewidth-cop-upper}
 \frac{\tw(\Gamma)+1}{\Delta(\Gamma)+1}
 \leq c_{1,\infty}(\Gamma)\leq\tw(\Gamma)+1
\end{equation}
\cite[Theorem~3.3]{AlonMehrabian2015}, where $\Delta(\Gamma)$ is its maximum
degree. Thus $c_{1,\infty}(\Gamma)=\Theta_\Delta(\tw(\Gamma)+1)$ for every
connected graph of maximum degree at most $\Delta$, and the treewidth
comparison in Corollary~\ref{cor:width-characterization} below is a special
case. Since $\tw\leq\pw$, the upper bound in \eqref{eq:treewidth-cop-upper}
also gives $c_{1,\infty}(\Gamma)\leq\pw(\Gamma)+1$. For abelian Cayley
graphs, Proposition~\ref{prop:pathwidth-quotient} supplies the reverse
bound for pathwidth, which we compare with expansion, character
displacement, spectrum, and diameter.
Combining the upper bound in \eqref{eq:treewidth-cop-upper} with
Theorem~\ref{thm:directed-lower}, applied to $\Gamma$ viewed as a symmetric
digraph (so that $D^+=d$ and $\iota_v^+(\Gamma)=\iota_v(\Gamma)$), the
inequality $\tw\leq\pw$ from Section~\ref{sec:notation}, and
Proposition~\ref{prop:pathwidth-quotient} gives the explicit chain
\eqref{eq:width-explicit-chain} below. The remaining comparisons follow
from Theorem~\ref{thm:abelian-characterization}, applied to $\Gamma$ viewed
as a symmetric digraph, whose underlying undirected graph is $\Gamma$.

\begin{corollary}[Width characterization]\label{cor:width-characterization}
Let $\Gamma$ be a connected undirected abelian Cayley graph of order $n\geq3$ and
degree $d$.  Then
\[
 \begin{split}
 c_{1,\infty}\left(\Gamma\right)
 &=\Theta_d\left(n\iota_v\left(\Gamma\right)\right)
 =\Theta_d\left(n\delta\left(\Gamma\right)\right)
 =\Theta_d\left(n\sqrt{\lambda_1\left(\Gamma\right)}\right)\\
 &=\Theta_d\left(\frac n{\diam\left(\Gamma\right)}\right)
 =\Theta_d\left(\tw\left(\Gamma\right)+1\right)
 =\Theta_d\left(\pw\left(\Gamma\right)+1\right).
 \end{split}
\]
More explicitly,
\begin{equation}\label{eq:width-explicit-chain}
 \frac{n\iota_v\left(\Gamma\right)}{4\left(d+1\right)}
 <c_{1,\infty}\left(\Gamma\right)
 \leq\tw\left(\Gamma\right)+1
 \leq\pw\left(\Gamma\right)+1
 \leq3n\delta\left(\Gamma\right).
\end{equation}
\end{corollary}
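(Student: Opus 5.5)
The chain \eqref{eq:width-explicit-chain} is assembled link by link; the $\Theta_d$ statements are then read off from it together with Theorem~\ref{thm:abelian-characterization}.

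The first link, the strict lower bound $n\iota_v(\Gamma)/(4(d+1))<c_{1,\infty}(\Gamma)$, comes from Theorem~\ref{thm:directed-lower}. We apply it to $\Gamma$ regarded as a symmetric digraph. This digraph is strongly connected because $\Gamma$ is connected. Every vertex has out-degree exactly $d$, since $S=-S$ and the out-neighbours of $g$ are the elements $g+s$. Finally, $\partial_v^+A=\partial_vA$ for every $A$, so $\iota_v^+=\iota_v$.

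The next three links need little work. The bound $c_{1,\infty}(\Gamma)\leq\tw(\Gamma)+1$ is the upper half of \eqref{eq:treewidth-cop-upper}, due to Alon--Mehrabian. The inequality $\tw\leq\pw$ holds because every path decomposition is a tree decomposition. The last link, $\pw(\Gamma)+1\leq3n\delta(\Gamma)$, is \eqref{eq:pathwidth-delta} from Proposition~\ref{prop:pathwidth-quotient}. That estimate is obtained by taking a character attaining $\delta$ and identifying its image with $\Z_m$, which gives a surjection $\pi:G\to\Z_m$ with $Q=m\delta$ and $1\leq Q\leq\lfloor m/2\rfloor$.

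For the $\Theta_d$ equalities, we apply Theorem~\ref{thm:abelian-characterization} to $\Gamma$ viewed as the abelian Cayley digraph $\Cay(G,S)$ with $|S|=d$. Its underlying undirected graph is $\Gamma$ itself, and $\delta$ is the same quantity. This yields
\[
c_{1,\infty}(\Gamma)=\Theta_d(n\iota_v)=\Theta_d(n\delta)=\Theta_d\bigl(n\sqrt{\lambda_1}\bigr)=\Theta_d(n/\diam).
\]
It remains to bring in the widths. In \eqref{eq:width-explicit-chain}, both $\tw+1$ and $\pw+1$ are sandwiched between $c_{1,\infty}(\Gamma)$ and $3n\delta(\Gamma)$. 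Since $n\delta=\Theta_d(c_{1,\infty})$ by the previous display, both widths are $\Theta_d(c_{1,\infty})$ as well.

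There is no genuine obstacle here, only bookkeeping. The point to check carefully is the degree convention. In this section $d=|S|$ is the degree of the undirected graph, and the out-degree of the symmetric digraph is also $d$, so the factor $4(d+1)$ in the lower bound is correct. Likewise, Theorem~\ref{thm:abelian-characterization} is applied with out-degree $d$. The hypothesis $n\geq3$ is needed for both theorems and is assumed in the statement.
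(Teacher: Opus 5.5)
Your proposal is correct and matches the paper's argument: the explicit chain comes from Theorem~\ref{thm:directed-lower} applied to $\Gamma$ as a symmetric digraph (with $D^+=d$ and $\iota_v^+=\iota_v$), the Alon--Mehrabian upper bound, $\tw\leq\pw$, and Proposition~\ref{prop:pathwidth-quotient}. The remaining $\Theta_d$ comparisons come from Theorem~\ref{thm:abelian-characterization} applied to $\Cay(G,S)$ with $S=-S$, whose underlying undirected graph is $\Gamma$, and the widths are then sandwiched between $c_{1,\infty}(\Gamma)$ and $3n\delta(\Gamma)$ exactly as you describe.
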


\begin{remark}
Mehrabian's estimate \eqref{eq:mehrabian-lower-intro} improves the first
constant here from $1/\left(4\left(d+1\right)\right)$ to $1/\left(4d\right)$,
with a non-strict inequality. Both bounds have the same order for fixed $d$.
\end{remark}

\section{Applications}

\subsection{Directed tori and sharpness of the bounded-degree exponent}
\label{sec:directed-tori}

For integers $m\geq3$ and $k\geq1$, define the positively oriented torus
\[
 \vec T_{m,k}:=\Cay\left(\Z_m^k,\{e_1,\ldots,e_k\}\right).
\]
It is strongly connected, has order $n=m^k$ and out-degree $k$.
Its underlying undirected graph $U(\vec T_{m,k})$ is $C_m^{\square k}$.

For the undirected torus $C_m^{\square k}$, the order of the cop number is
known.  When $m$ is even, Mehrabian's
Cartesian-product theorem gives
\[
 \frac{m^{k-1}}{2k^2}
 \leq c_{1,\infty}\left(C_m^{\square k}\right)
 \leq2m^{k-1}
\]
\cite[Theorem~5.1(c)]{Mehrabian2012}.  In dimension two,
Kinnersley--Townsend sharpened this to
\[
 2m-24\leq c_{1,\infty}\left(C_m\square C_m\right)\leq2m
 \qquad\left(m\geq18\right)
\]
\cite[Theorem~2.7]{KinnersleyTownsend}.  Theorem~\ref{thm:directed-tori}
below determines the order of $c_{1,\infty}$ for positively oriented tori.
It shows that the exponent $1-1/k$ in the
upper bound of \cite[Theorem~1.3]{Biswas2026} for abelian Cayley digraphs of
out-degree $k$ is attained.

\begin{lemma}[Geometry of directed tori]\label{lem:torus-geometry}
Let $m\geq3$, $k\geq1$, and $L=\lfloor m/2\rfloor$.
Then
\[
 \delta\left(\vec T_{m,k}\right)=\frac1m,
 \qquad \diam\left(U(\vec T_{m,k})\right)=kL,
\]
and
\[
 \frac{2}{\left(k+1\right)\left(2kL+1\right)}
 \leq\iota_v^+\left(\vec T_{m,k}\right)
 \leq\frac1L.
\]
In particular, $\iota_v^+\left(\vec T_{m,k}\right)=\Theta_k\left(1/m\right)$.
\end{lemma}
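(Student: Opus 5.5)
We prove the three assertions in the order $\delta$, diameter, expansion bounds. Each reduces to an elementary computation or to a result stated earlier.

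\emph{Character displacement.} Every character of $\Z_m^k$ has the form $\chi(x)=a\cdot x/m$ with $a\in\Z_m^k$, so $\chi(e_i)=a_i/m$. Hence
\[
 \max_i\normtor{\chi(e_i)}=\max_i|a_i|_m/m .
\]
For $a\neq0$ some $a_i\neq0$, which gives $|a_i|_m\geq1$, so this maximum is at least $1/m$. The choice $a=e_1$ attains $1/m$. This proves $\delta(\vec T_{m,k})=1/m$.

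\emph{Diameter.} The graph $U(\vec T_{m,k})=C_m^{\square k}$ is a Cartesian product, so its distance is the sum of the cycle distances in the coordinates: $\operatorname{dist}(x,y)=\sum_i|x_i-y_i|_m$. Each summand is at most $L=\lfloor m/2\rfloor$, with equality attainable in every coordinate, so the diameter is $kL$.

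\emph{Lower bound on $\iota_v^+$.} Proposition~\ref{prop:expansion-diameter} (via Lemma~\ref{lem:babai-szegedy}) gives $\iota_v(U(\vec T_{m,k}))\geq 2/(2kL+1)$. By Remark~\ref{rem:cayley-eulerian} the Cayley digraph is Eulerian with $D^-=k$. The right-hand inequality of \eqref{eq:in-out-comparison} then gives
\[
 \iota_v^+\geq \frac{\iota_v}{k+1}\geq \frac{2}{(k+1)(2kL+1)} .
\]

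\emph{Upper bound on $\iota_v^+$.} Take the directed band $A=\{x:x_1\in\{0,\ldots,L-1\}\}$, so $|A|=Lm^{k-1}\leq n/2$. An arc $x\to x+e_i$ with $i\neq1$ stays in $A$. The arc $x\to x+e_1$ leaves $A$ only from the fibre $x_1=L-1$, and it lands in the fibre $x_1=L$. Since $L\leq m-1$, that fibre is disjoint from $A$. Thus $\partial_v^+A$ is exactly one fibre, of size $m^{k-1}$, and the ratio is $1/L$. This is the directed analogue of Lemma~\ref{lem:character-band}, and it saves the factor $2$ because only the positive side of the band contributes.

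\emph{Conclusion.} Since $L\asymp m$ for $m\geq3$, both bounds are $\Theta_k(1/m)$. No step is a genuine obstacle. The only care needed is checking that the band's out-boundary is a single fibre and that $|A|\leq n/2$, both immediate from $L=\lfloor m/2\rfloor$.
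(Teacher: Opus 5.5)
Your proposal is correct and follows the paper's proof essentially step for step. The matching steps are the character computation with $\chi_{e_1}$ attaining $1/m$, the coordinatewise cycle-distance formula for the diameter, the Babai--Szegedy bound combined with the Eulerian in/out comparison with $D^-=k$, and the slab $\{0\leq x_1<L\}$ whose out-boundary is the single layer $x_1=L$. The only difference is cosmetic: you reach the Babai--Szegedy bound through Proposition~\ref{prop:expansion-diameter} rather than citing Lemma~\ref{lem:babai-szegedy} directly.
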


\begin{proof}
The characters of $\Z_m^k$ are $\chi_\ell(x)=\ell\cdot x/m+\Z$ for
$\ell\in\Z_m^k$, and $\chi_\ell(e_i)=\ell_i/m+\Z$.  If $\ell\neq0$, some
$\ell_i\neq0$, so $\normtor{\chi_\ell(e_i)}\geq1/m$.  On the other hand,
$\chi_{e_1}$ has maximum generator norm exactly $1/m$.
This proves the character formula, by \eqref{eq:def-delta}.
Distance in the undirected torus is the sum of the cyclic coordinate
distances, each at most $L$, and the vertex with every coordinate equal
to $L$ attains distance $kL$ from the origin.
Lemma~\ref{lem:babai-szegedy} therefore gives
\[
 \iota_v\left(U(\vec T_{m,k})\right)\geq\frac{2}{2kL+1}.
\]
Lemma~\ref{lem:in-out-comparison}, with $D^-=k$, gives the directed
expansion lower bound.  For the upper bound, we take
\[
 A=\{x\in\Z_m^k:0\leq x_1<L\}.
\]
This set has $Lm^{k-1}\leq m^k/2$ vertices.  Only an $e_1$-move can
leave it, and its out-boundary is the layer $x_1=L$, of size $m^{k-1}$.
Thus the boundary ratio is $1/L$.
\end{proof}

\begin{theorem}[Directed tori]\label{thm:directed-tori}
For every fixed $k\geq1$, uniformly for $m\geq3$ and $n=m^k$,
\begin{equation}\label{eq:directed-torus-order}
 c_{1,\infty}\left(\vec T_{m,k}\right)
 =\Theta_k\left(m^{k-1}\right)
 =\Theta_k\left(n^{1-1/k}\right).
\end{equation}
More explicitly, if $L=\lfloor m/2\rfloor$, then
\begin{equation}\label{eq:directed-torus-explicit}
 \frac{m^k}{2\left(k+1\right)^2\left(2kL+1\right)}
 <c_{1,\infty}\left(\vec T_{m,k}\right)
 \leq2m^{k-1}.
\end{equation}
For $k=1$, the exact value is $c_{1,\infty}\left(\vec T_{m,1}\right)=2$.
\end{theorem}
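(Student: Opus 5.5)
The plan is to combine Theorem~\ref{thm:directed-lower} with Lemma~\ref{lem:torus-geometry} for the lower bound, and Corollary~\ref{cor:character-upper} (equivalently Proposition~\ref{prop:cyclic-sweep}) for the upper bound.

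For the lower bound, $\vec T_{m,k}$ is strongly connected with $n=m^k\geq3$ and $D^+=k$. Lemma~\ref{lem:torus-geometry} gives $\iota_v^+(\vec T_{m,k})\geq 2/((k+1)(2kL+1))$. Substituting into Theorem~\ref{thm:directed-lower} yields
\[
 c_{1,\infty}\left(\vec T_{m,k}\right)>\frac{m^k}{4(k+1)}\cdot\frac{2}{(k+1)(2kL+1)}
 =\frac{m^k}{2(k+1)^2(2kL+1)},
\]
which is the left side of \eqref{eq:directed-torus-explicit}.

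For the upper bound, Lemma~\ref{lem:torus-geometry} gives $\delta(\vec T_{m,k})=1/m$. Corollary~\ref{cor:character-upper} then yields $c_{1,\infty}\leq 2m^k/m=2m^{k-1}$. Directly, one can use the projection $\pi(x)=x_1$ onto $\Z_m$ with $Q=1$ in Proposition~\ref{prop:cyclic-sweep}.

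For the asymptotic statement, $2kL+1\leq km+1\leq 2km$, so the lower bound is at least $m^{k-1}/(4k(k+1)^2)$. Together with the upper bound this gives $\Theta_k(m^{k-1})=\Theta_k(n^{1-1/k})$, uniformly in $m\geq3$.

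For $k=1$ the upper bound reads $c_{1,\infty}(\vec T_{m,1})\leq 2$, so it remains to show that one cop does not suffice on the directed cycle $\vec C_m$, $m\geq3$. This is the only step needing a separate argument, since the general lower bound is too weak here. The robber keeps the following invariant: after its move, the robber is not at the cop's vertex $c$ nor at $c+1$. Initially it picks any vertex other than $c,c+1$, which is possible since $m\geq3$. When the cop moves to $c'\in\{c,c+1\}$, it does not land on the robber. The robber at $r\neq c'$ may then travel along the cycle through any vertices up to, but not including, $c'$. It stays put if $r\neq c'+1$; otherwise it advances from $c'+1$ to $c'+2$, which is different from $c'$ because $m\geq3$. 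The path from $c'+1$ to $c'+2$ avoids $c'$, so the move is legal, and the invariant is restored. Hence one cop never captures, and $c_{1,\infty}(\vec T_{m,1})=2$.
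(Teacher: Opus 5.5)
Your proposal is correct and follows the paper's proof. You get the lower bound from Theorem~\ref{thm:directed-lower} with the expansion estimate of Lemma~\ref{lem:torus-geometry}, and the upper bound from $\delta(\vec T_{m,k})=1/m$ via Corollary~\ref{cor:character-upper}. For $k=1$ your invariant $r\notin\{c,c+1\}$ is only a slightly more permissive version of the paper's ``stay two steps ahead'' evasion argument, and you also write out the routine asymptotic check $2kL+1\leq 2km$ that the paper leaves implicit.
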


\begin{proof}
Lemma~\ref{lem:torus-geometry} gives
$\delta\left(\vec T_{m,k}\right)=1/m$, so
Corollary~\ref{cor:character-upper} yields the upper bound.
Theorem~\ref{thm:directed-lower}, together with the
expansion lower bound in Lemma~\ref{lem:torus-geometry}, yields
\[
 c_{1,\infty}\left(\vec T_{m,k}\right)
 >\frac{m^k}{2\left(k+1\right)^2\left(2kL+1\right)}.
\]
For $k=1$, two cops suffice by Lemma~\ref{lem:cyclic-barrier}.
Against one cop, the robber starts two steps ahead and moves one step
precisely when the cop moves, restoring this distance after every round.
This proves the exact value; see also \cite[Remark~4.8]{Biswas2026}.
\end{proof}

The undirected assertion below is known from
\cite[Theorem~5.1(c)]{Mehrabian2012}; see also
\cite[Theorem~1.3 and Section~5.2]{Biswas2026}.

\begin{corollary}[Sharpness of the degree exponents]
\label{cor:degree-exponent-sharpness}
For every fixed integer $D\geq1$, there is a family of strongly connected
oriented abelian Cayley digraphs of out-degree $D$, with orders $n$ tending
to infinity, such that
\[
 c_{1,\infty}\left(\Gamma^+\right)=\Theta_D\left(n^{1-1/D}\right).
\]
For every fixed even integer $D\geq2$, there is a family of connected
undirected abelian Cayley graphs of degree $D$ satisfying
\[
 c_{1,\infty}\left(\Gamma\right)=\Theta_D\left(n^{1-2/D}\right).
\]
\end{corollary}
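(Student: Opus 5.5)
For the directed assertion I take $\Gamma^+=\vec T_{m,D}$ with $m\geq3$, so $k=D$ and $n=m^D$. This is an abelian Cayley digraph of out-degree $D$. It is oriented because $m\geq3$: the arcs $x\to x+e_i$ and $x+e_i\to x$ would both require $-e_i=e_j$ for some $j$, which is impossible. Theorem~\ref{thm:directed-tori} gives $c_{1,\infty}(\vec T_{m,D})=\Theta_D(m^{D-1})=\Theta_D(n^{1-1/D})$ uniformly in $m$. Letting $m\to\infty$ gives the required family, and for $D=1$ this is just the cycle case with value $2=\Theta(n^0)$.

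For the undirected assertion I set $k=D/2$ and take $\Gamma=C_m^{\square k}=\Cay(\Z_m^k,\{\pm e_1,\ldots,\pm e_k\})$ with $m\geq3$. It has degree $2k=D$, since $e_i\neq-e_i$ when $m\geq3$, and order $n=m^k$. Its cop number then follows from the comparisons for abelian Cayley graphs, with no need for Mehrabian's product theorem or parity restrictions on $m$. I apply Corollary~\ref{cor:width-characterization}, or equivalently Theorem~\ref{thm:abelian-characterization} applied to $\Gamma$ viewed as a symmetric digraph, which gives $c_{1,\infty}(\Gamma)=\Theta_D(n/\diam(\Gamma))$. The diameter is $kL$ with $L=\lfloor m/2\rfloor$, by the same coordinatewise computation as in Lemma~\ref{lem:torus-geometry}. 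Hence $c_{1,\infty}(\Gamma)=\Theta_D(m^k/m)=\Theta_D(m^{k-1})=\Theta_D(n^{1-2/D})$.

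If explicit constants are wanted, Corollary~\ref{cor:cop-diameter} applies to $\Gamma$ with $d=D$ and $R=kL\asymp_D m$, giving them directly. Alternatively, the upper bound can be obtained from Corollary~\ref{cor:character-upper} with $\delta(\Gamma)=1/m$, computed as in Lemma~\ref{lem:torus-geometry}, which yields $c_{1,\infty}(\Gamma)\leq 2m^{k-1}$.

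The only point needing care is a degenerate case and the uniformity of the constants. When $D=2$ we have $k=1$, so $\Gamma=C_m$ and the claim is $\Theta(1)$. This is covered by the general bounds; by Remark~\ref{rem:nonnormal-lift} the exact value is in fact $2$. The main potential obstacle is that the $\Theta_D$ constants must be independent of $m$. This holds because every cited comparison has constants depending only on the degree.
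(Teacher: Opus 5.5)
Your proposal is correct and follows the paper's proof. The directed family is $\vec T_{m,D}$, handled by Theorem~\ref{thm:directed-tori}, and the undirected family is $U(\vec T_{m,D/2})=C_m^{\square D/2}$, handled by Corollary~\ref{cor:width-characterization}; the only difference is that you use the link $\Theta_D(n/\diam(\Gamma))$ with $\diam(\Gamma)=kL$, whereas the paper uses $\Theta_D(n\delta(\Gamma))$ with $\delta(\Gamma)=1/m$, an immaterial difference.

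One small slip in an aside: $c_{1,\infty}(C_m)=2$ fails for $m=3$, where a single cop captures, and Remark~\ref{rem:nonnormal-lift} only treats cycles $C_{2m'}$ with $m'$ odd. Since only $\Theta(1)$ is needed there, nothing in the argument is affected.
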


\begin{proof}
For the directed assertion, we take $\Gamma^+=\vec T_{m,D}$ and let $m$ tend
to infinity.  Its arcs add a positive coordinate vector, and $m\geq3$
ensures that no arc has its reverse, so the digraph is oriented.
Theorem~\ref{thm:directed-tori} gives the claimed order.
For the undirected assertion, we put $k=D/2$ and take
$\Gamma=U(\vec T_{m,k})$, of order $n=m^k$ and degree $2k=D$.
Lemma~\ref{lem:torus-geometry} and the identity
$\delta\left(U(\Gamma^+)\right)=\delta\left(\Gamma^+\right)$ of
Section~\ref{sec:cayley-notation} give $\delta\left(\Gamma\right)=1/m$.
Corollary~\ref{cor:width-characterization} then gives
$c_{1,\infty}\left(\Gamma\right)=\Theta_D\left(n\delta(\Gamma)\right)
=\Theta_D\left(n/m\right)$.
\end{proof}

The directed assertion attains the exponent in the bounded-out-degree
upper bound of \cite[Theorem~1.3]{Biswas2026}.

\subsection{The directed Heisenberg graph}
\label{sec:heisenberg}

For an odd prime $p$, let $H_p$ be $\F_p^3$ with multiplication
\begin{equation}\label{eq:heisenberg-law}
 (a,b,c)(a',b',c')=(a+a',b+b',c+c'+ab').
\end{equation}
Write $x=(1,0,0)$, $y=(0,1,0)$, and $z=(0,0,1)$, and define
\[
 \mathcal H_p:=\Cay(H_p,\{x,y\}),\qquad\text{so that}\qquad
 U(\mathcal H_p)=\Cay(H_p,\{x,x^{-1},y,y^{-1}\}).
\]
The element $z$ is central, and $[x,y]=z$.
Hence $x$ and $y$ generate $H_p$, and $\mathcal H_p$ is strongly connected
of order $p^3$ and out-degree $2$.

\begin{lemma}[Diameter and expansion of the Heisenberg graph]
\label{lem:heisenberg-geometry}
The underlying undirected graph $U(\mathcal H_p)$ satisfies
\begin{equation}\label{eq:heisenberg-diameter}
 \frac{p-1}{2}\leq\diam\left(U(\mathcal H_p)\right)\leq2p,
\end{equation}
and
\begin{equation}\label{eq:heisenberg-expansion}
 \frac{2}{4p+1}\leq
 \iota_v\left(U(\mathcal H_p)\right)
 \leq\frac{4}{p-1}.
\end{equation}
Moreover,
\begin{equation}\label{eq:heisenberg-directed-expansion}
 \frac{2}{3\left(4p+1\right)}\leq
 \iota_v^+\left(\mathcal H_p\right)
 \leq\frac{2}{p-1}.
\end{equation}
\end{lemma}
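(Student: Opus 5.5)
The plan is to treat the two coordinates $(a,b)$ of $H_p$ through the abelianization $\pi:H_p\to\F_p^2$, $\pi(a,b,c)=(a,b)$. This map is a homomorphism by \eqref{eq:heisenberg-law}. Right multiplication by $x^{\pm1}$ changes $a$ by $\pm1$ and leaves $b$ fixed, while $y^{\pm1}$ changes $b$ by $\pm1$ and leaves $a$ fixed. Every bound then follows from this observation, a short explicit word construction, and lemmas already in the paper.

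\emph{Diameter.} For the lower bound in \eqref{eq:heisenberg-diameter}, each edge of $U(\mathcal H_p)$ changes the first coordinate by $\pm1$ or not at all. So the distance from the identity to $(L,0,0)$, with $L=(p-1)/2$, is at least $|L|_p=L$. For the upper bound, I will write an arbitrary $(a,b,c)$ as a short word. Using \eqref{eq:heisenberg-law} one computes $y^{b_1}x^{a}y^{b_2}=(a,b_1+b_2,ab_2)$. If $a\neq0$, choose the representative of $a$ with $|a|\le L$, then $b_2=c/a$, then $b_1=b-b_2$, each with representative of absolute value at most $L$. The word length is at most $3L<2p$. If $a=0$, one checks $xy^{t}x^{-1}=(0,t,t)$, so $xy^{c}x^{-1}y^{b-c}=(0,b,c)$, a word of length at most $2+2L=p+1\le2p$. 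The only computation needing care is this explicit word construction, which I regard as the main (if modest) obstacle; everything else is routine.

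\emph{Undirected expansion.} The lower bound in \eqref{eq:heisenberg-expansion} is \eqref{eq:babai-szegedy-half} of Lemma~\ref{lem:babai-szegedy} with $R\le2p$, which gives $2/(2R+1)\ge2/(4p+1)$. For the upper bound I will use a band as in Lemma~\ref{lem:character-band}. Take $A=\{(a,b,c):a\in\{0,\dots,L-1\}\}$, which has $Lp^2\le p^3/2$ elements. Since only $x^{\pm1}$ changes $a$, and only by one, $\partial_vA$ is contained in the two layers $a=L$ and $a=-1$. These layers contain $2p^2$ vertices, so the ratio is at most $2/L=4/(p-1)$.

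\emph{Directed expansion.} For \eqref{eq:heisenberg-directed-expansion}, use the same set $A$. The out-arcs only add $x$ or $y$, so the out-boundary lies in the single layer $a=L$, giving ratio at most $p^2/(Lp^2)=2/(p-1)$. For the lower bound, $\mathcal H_p$ is a Cayley digraph, so it is Eulerian with $D^-=2$ by Remark~\ref{rem:cayley-eulerian}. Lemma~\ref{lem:in-out-comparison} then gives $\iota_v^+(\mathcal H_p)\ge\iota_v(U(\mathcal H_p))/3\ge2/(3(4p+1))$.
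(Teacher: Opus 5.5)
Your proof is correct. It follows the paper's argument step for step, except for the word you use to bound the diameter from above. The shared steps are:
\begin{enumerate}[label=\textup{(\roman*)}]
\item the first-coordinate count for the diameter lower bound;
\item Lemma~\ref{lem:babai-szegedy} with $R\le 2p$ for the lower bound in \eqref{eq:heisenberg-expansion};
\item the slab $\{(a,b,c):a\in\{0,\dots,L-1\}\}$ for both expansion upper bounds;
\item Lemma~\ref{lem:in-out-comparison} with $D^-=2$ for the directed lower bound.
\end{enumerate}

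For the diameter upper bound, the paper uses one word with no case split. It writes $(a,b,c)=x^\alpha y^\beta\,[x,y^\tau]$, where $\tau$ represents $c-ab$. This uses $x^\alpha y^\beta=(a,b,ab)$ and $[x,y^\tau]=z^\tau$, and gives length at most $(p-1)+(p+1)=2p$.

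You instead use $y^{b_1}x^ay^{b_2}=(a,b_1+b_2,ab_2)$ and solve $b_2=c/a$. When $a=0$ you switch to the conjugation word $xy^cx^{-1}y^{b-c}$. Both identities check out under \eqref{eq:heisenberg-law}.

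Your route gives the sharper bound $\max\{3(p-1)/2,\,p+1\}$. Fed into Lemma~\ref{lem:babai-szegedy}, it would slightly improve the constants in \eqref{eq:heisenberg-expansion} and Theorem~\ref{thm:heisenberg-cops}, though the stated lemma only needs $R\le2p$. The cost is that it relies on $a$ being invertible, i.e.\ on $p$ being prime. The paper's commutator correction uses only ring operations, so it carries over verbatim to $\Z_m$ with composite $m$ (where $a$ may be a zero divisor) and to $H_{m,r}$ in Theorem~\ref{thm:heisenberg-general}.
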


\begin{proof}
Right multiplication by $x^{\pm1}$ changes the first coordinate by
$\pm1$, while right multiplication by $y^{\pm1}$ leaves it unchanged.  Thus
reaching $\left(\left(p-1\right)/2,0,0\right)$ from the identity requires at least $\left(p-1\right)/2$
steps, proving the diameter lower bound.

For the upper bound, we choose least-absolute-value integer representatives
$\alpha,\beta$ of $a,b\in\F_p$.  Since
\[
 x^\alpha y^\beta=\left(a,b,ab\right),
\]
it remains to correct the central coordinate.  We choose a least-absolute-value
representative $\tau$ of $c-ab$.  The identity
\[
 [x,y^\tau]=z^\tau
\]
expresses this correction with length at most $2+2|\tau|\leq p+1$.
Since $|\alpha|+|\beta|\leq p-1$, every group element has word length at most
$2p$, proving the diameter upper bound.

The lower bound in \eqref{eq:heisenberg-expansion} follows from
Lemma~\ref{lem:babai-szegedy} and the diameter upper bound:
\[
 \iota_v\left(U(\mathcal H_p)\right)
 \geq\frac{2}{2\diam\left(U(\mathcal H_p)\right)+1}
 \geq\frac{2}{4p+1}.
\]
For the upper bound, we set $L=\left(p-1\right)/2$ and take the slab
\[
 A:=\{\left(a,b,c\right):a\in\{0,1,\ldots,L-1\}\}.
\]
It has $Lp^2$ vertices.  The $y^{\pm1}$ moves preserve the first coordinate,
while the $x^{\pm1}$ moves produce at most the two adjacent layers.  Hence
$|\partial_v A|\leq2p^2$, giving the upper bound in
\eqref{eq:heisenberg-expansion}.

Lemma~\ref{lem:in-out-comparison} with $D^-=2$ gives
\[
 \iota_v^+\left(\mathcal H_p\right)
 \geq\frac13\iota_v\left(U(\mathcal H_p)\right),
\]
which proves the lower bound in
\eqref{eq:heisenberg-directed-expansion}.  For the upper bound, we use the same
slab.  Of the two directed generators, only $x$ can leave it, and it produces
one layer.  Therefore
$|\partial_v^+A|=p^2$ and
$\iota_v^+\left(\mathcal H_p\right)\leq1/L=2/\left(p-1\right)$.
\end{proof}

\begin{theorem}[Directed Heisenberg graphs]\label{thm:heisenberg-cops}
For every odd prime $p$,
\begin{equation}\label{eq:heisenberg-cops}
 \frac{p^3}{18\left(4p+1\right)}
 <c_{1,\infty}\left(\mathcal H_p\right)
 \leq2p^2.
\end{equation}
In particular,
\[
 c_{1,\infty}\left(\mathcal H_p\right)=\Theta\left(p^2\right)=\Theta\left(|H_p|^{2/3}\right).
\]
\end{theorem}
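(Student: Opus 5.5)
The two bounds in \eqref{eq:heisenberg-cops} come from different results and are proved separately.

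For the lower bound we apply Theorem~\ref{thm:directed-lower} to $\mathcal H_p$. It is strongly connected of order $n=p^3\geq27$ and has maximum out-degree $D^+=2$. Using the lower bound in \eqref{eq:heisenberg-directed-expansion}, $\iota_v^+(\mathcal H_p)\geq 2/(3(4p+1))$, we get
\[
 c_{1,\infty}(\mathcal H_p)>\frac{p^3}{4\cdot 3}\cdot\frac{2}{3(4p+1)}=\frac{p^3}{18(4p+1)},
\]
which is exactly the stated constant.

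For the upper bound we use Proposition~\ref{prop:cyclic-sweep}. The quotient is the projection onto the first coordinate,
\[
 \pi:H_p\to\Z_p,\qquad \pi(a,b,c)=a .
\]
The multiplication law \eqref{eq:heisenberg-law} shows that $\pi$ is a homomorphism, and it is surjective since $\pi(x)=1$. We also have $\pi(y)=0$, so $Q=Q_\pi(\{x,y\})=1$ and $m=p$. The proposition then gives
\[
 c_{1,\infty}(\mathcal H_p)\leq \frac{2\cdot 1\cdot p^3}{p}=2p^2 .
\]
Concretely, the cops occupy two full slabs $\{a=i\}$, each with $p^2$ vertices, and sweep one of them along the $x$-direction. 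The generator $y$ fixes the first coordinate, so no robber move crosses a barrier. Note that $0\in\pi(S)$ is permitted in Lemma~\ref{lem:cyclic-barrier}, since loops are suppressed.

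The asymptotic statement follows at once: $p^3/(18(4p+1))\geq p^2/90$ for $p\geq3$, so both bounds are of order $p^2$, and $p^2=|H_p|^{2/3}$. There is no real obstacle here. The only points that need checking are that Theorem~\ref{thm:directed-lower} needs no abelian hypothesis, and that the cyclic-sweep proposition is stated for arbitrary finite groups, which it is.
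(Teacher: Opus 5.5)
Your proof is correct and follows the paper's argument exactly. The upper bound comes from Proposition~\ref{prop:cyclic-sweep} applied to the projection $(a,b,c)\mapsto a$ with $Q=1$, and the lower bound comes from Theorem~\ref{thm:directed-lower} with $D^+=2$ and the directed expansion bound $\iota_v^+(\mathcal H_p)\geq 2/(3(4p+1))$ of Lemma~\ref{lem:heisenberg-geometry}.
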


\begin{proof}
The projection
\[
 \pi:H_p\longrightarrow\Z_p,
 \qquad \pi\left(a,b,c\right)=a,
\]
is a surjective homomorphism with fibres of size $p^2$; it sends $x$ to $1$
and $y$ to $0$. Proposition~\ref{prop:cyclic-sweep} with $Q=1$ gives
$c_{1,\infty}\left(\mathcal H_p\right)\leq2p^2$.

For the lower bound, Lemma~\ref{lem:heisenberg-geometry} gives
\[
 \iota_v^+\left(\mathcal H_p\right)\geq\frac{2}{3\left(4p+1\right)}.
\]
The maximum out-degree is $2$, so Theorem~\ref{thm:directed-lower} yields
\[
 c_{1,\infty}\left(\mathcal H_p\right)
 >\frac{p^3}{12}\cdot\frac{2}{3\left(4p+1\right)}
 =\frac{p^3}{18\left(4p+1\right)}.
\]
\end{proof}

\subsection{Higher-dimensional Heisenberg groups over cyclic rings}
\label{sec:heisenberg-general}

The preceding argument uses only addition and multiplication in $\F_p$.  It
therefore extends to the rings $\Z_m$ with composite $m$ and to
higher-dimensional Heisenberg groups.

For $m\geq3$ and $r\geq1$, define the higher-dimensional Heisenberg group
and its directed Cayley graph by
\[
 H_{m,r}:=(\Z_m)^r\times(\Z_m)^r\times\Z_m,
 \qquad
 (a,b,c)(a',b',c')=(a+a',b+b',c+c'+a\cdot b'),
\]
\[
 x_i=(e_i,0,0),\quad y_i=(0,e_i,0)\quad(1\leq i\leq r),
 \qquad
 \mathcal H_{m,r}:=\Cay(H_{m,r},\{x_1,\ldots,x_r,y_1,\ldots,y_r\}).
\]
\begin{theorem}\label{thm:heisenberg-general}
Fix $r\geq1$ and $m\geq3$.
Then $|H_{m,r}|=m^{2r+1}$, the digraph has out-degree $2r$, and
\[
 \frac{m^{2r+1}}{2\left(2r+1\right)^2\left(4\left(r+1\right)\lfloor m/2\rfloor+5\right)}
 < c_{1,\infty}\left(\mathcal H_{m,r}\right)
 \leq 2m^{2r}.
\]
In particular, for fixed $r$,
\[
 c_{1,\infty}\left(\mathcal H_{m,r}\right)
 =\Theta_r\left(m^{2r}\right)
 =\Theta_r\left(|H_{m,r}|^{1-1/\left(2r+1\right)}\right).
\]
\end{theorem}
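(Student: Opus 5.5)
We follow the proof of Theorem~\ref{thm:heisenberg-cops}, replacing $\F_p$ by $\Z_m$ and single generators by $r$-tuples.

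\textbf{Basic facts.} The multiplication law is associative, with identity $0$ and inverse $(a,b,c)^{-1}=(-a,-b,-c+a\cdot b)$. Hence $|H_{m,r}|=m^{2r+1}$. The $2r$ generators are distinct and nonidentity, so the out-degree is $2r$. Since $[x_i,y_i]=z:=(0,0,1)$ is central, the generators generate, and the digraph is strongly connected.

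\textbf{Upper bound.} The projection $\pi(a,b,c)=a_1$ is a surjective homomorphism onto $\Z_m$, because the first coordinate adds. It sends $x_1\mapsto1$ and every other generator to $0$, so $Q_\pi(S)=1$. Proposition~\ref{prop:cyclic-sweep} then gives $c_{1,\infty}\le 2\cdot m^{2r+1}/m=2m^{2r}$.

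\textbf{Lower bound via diameter.} Let $R=\diam(U(\mathcal H_{m,r}))$ and $L=\lfloor m/2\rfloor$.

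\begin{itemize}
\item \emph{Reaching $(a,b,\ast)$.} Choose least-absolute-value representatives $\alpha_i,\beta_i$ of $a_i,b_i$. The word $\prod_i x_i^{\alpha_i}\prod_i y_i^{\beta_i}$ evaluates to $(a,b,a\cdot b)$. It has length at most $2rL$.
\item \emph{Correcting the central coordinate.} Take a least-absolute-value representative $\tau$ of $c-a\cdot b$, so $|\tau|\le L$. Multiply by $[x_1,y_1^\tau]=z^\tau$; this step needs verification via $[u,v]=u^{-1}v^{-1}uv$ and bilinearity of the cocycle. The commutator word has length $2+2L$.
\end{itemize}

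Together these give $R\le 2rL+2L+2=2(r+1)L+2$, so $2R+1\le 4(r+1)L+5$. This matches the displayed denominator exactly, which indicates this is the intended route.

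Lemma~\ref{lem:babai-szegedy} applies because Cayley graphs are vertex-transitive, and gives $\iota_v(U)\ge 2/(2R+1)$. Remark~\ref{rem:cayley-eulerian} makes the digraph Eulerian with $D^-=2r$, so Lemma~\ref{lem:in-out-comparison} gives $\iota_v^+\ge \iota_v(U)/(2r+1)$. Theorem~\ref{thm:directed-lower} with $D^+=2r$ then yields
\[
c_{1,\infty}>\frac{m^{2r+1}}{4(2r+1)}\cdot\frac{2}{(2r+1)(4(r+1)L+5)},
\]
which is the stated bound. This is equivalently Corollary~\ref{cor:cop-diameter}'s form with $d=2r$. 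Note that $m\ge3$ gives $n\ge3$.

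\textbf{Asymptotics and main obstacle.} For fixed $r$ the lower bound is $\Omega_r(m^{2r})$, matching the upper bound. Also $m^{2r}=|H_{m,r}|^{1-1/(2r+1)}$.

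The only step needing care is the word-length estimate. Over $\Z_m$ with $m$ even, "least-absolute-value" representatives still satisfy $|\alpha|\le L$. One must also check the commutator identity $[x_1,y_1^\tau]=z^\tau$ in this group law: $y_1^\tau=(0,\tau e_1,0)$, and direct computation gives central coordinate $\tau\cdot(e_1\cdot e_1)=\tau$. No primality is used anywhere.
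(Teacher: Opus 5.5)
Your proposal is correct and follows the paper's proof essentially step for step. It uses the same basic group checks, the projection $(a,b,c)\mapsto a_1$ with $Q=1$ in Proposition~\ref{prop:cyclic-sweep}, and the diameter bound $R\le 2(r+1)L+2$ obtained by reaching $(a,b,a\cdot b)$ and then applying the central correction $[x_1,y_1^\tau]=z^\tau$; it finishes with Babai--Szegedy, the Eulerian comparison with $D^-=2r$, and Theorem~\ref{thm:directed-lower}. The differences are only cosmetic. Your word $\prod_i x_i^{\alpha_i}\prod_i y_i^{\beta_i}$ replaces the paper's $\prod_i x_i^{\alpha_i}y_i^{\beta_i}$, and both evaluate to $(a,b,a\cdot b)$. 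Your comparison with Corollary~\ref{cor:cop-diameter} is only a match of formulas, since that corollary is stated for abelian groups, but you correctly derive the bound directly from results that need only vertex-transitivity and the Eulerian property.
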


\begin{proof}
Bilinearity of the dot product proves associativity: the central coordinate
in either product of three factors is
$c+c'+c''+a\cdot b'+a\cdot b''+a'\cdot b''$.
The identity is $\left(0,0,0\right)$ and
$\left(a,b,c\right)^{-1}=\left(-a,-b,-c+a\cdot b\right)$.
Thus $H_{m,r}$ is a group of the stated order.
We write $z=\left(0,0,1\right)$, which is central.  Direct calculation gives
$[x_i,y_j]=z^{\delta_{ij}}$, where $\delta_{ij}$ is the Kronecker delta, so the displayed generators generate the
group and the Cayley digraph is strongly connected.  They are distinct and
nonidentity, so the out-degree is $2r$.

To bound the diameter, we set $L=\lfloor m/2\rfloor$ and choose integer representatives
$\alpha_i,\beta_i$ of $a_i,b_i$ with absolute values at most $L$.
Such representatives exist for both odd and even $m$; all word lengths
below are taken in $U(\mathcal H_{m,r})$, where inverse generators are
available.
Then
\[
 \prod_{i=1}^r x_i^{\alpha_i}y_i^{\beta_i}=\left(a,b,a\cdot b\right).
\]
We choose $\tau$ representing $c-a\cdot b$ with $|\tau|\leq L$.
The identity $[x_1,y_1^\tau]=z^\tau$ gives a word for the central correction
of length at most $2+2L$.  Appending this word adds $\tau$ to the central
coordinate and leaves $a$ and $b$ unchanged, giving $\left(a,b,c\right)$.
Hence $U(\mathcal H_{m,r})$ has diameter
\[
 R\leq 2rL+2+2L=2\left(r+1\right)L+2.
\]
The projection $\left(a,b,c\right)\mapsto a_1$ is a surjective homomorphism to $\Z_m$;
it sends $x_1$ to $1$ and all other generators to $0$.
Proposition~\ref{prop:cyclic-sweep} gives the upper bound $2m^{2r}$.

For the lower bound, \eqref{eq:babai-szegedy-half} in
Lemma~\ref{lem:babai-szegedy}, applied to $U(\mathcal H_{m,r})$, and Lemma~\ref{lem:in-out-comparison} with $D^-=2r$ imply
\[
 h^+:=\iota_v^+\left(\mathcal H_{m,r}\right)
 \geq\frac{2}{\left(2r+1\right)\left(2R+1\right)}.
\]
Theorem~\ref{thm:directed-lower} gives
\[
 c_{1,\infty}\left(\mathcal H_{m,r}\right)
 >\frac{m^{2r+1}h^+}{4\left(2r+1\right)}
 \geq\frac{m^{2r+1}}{2\left(2r+1\right)^2\left(2R+1\right)}.
\]
We finish by substituting $2R+1\leq4\left(r+1\right)L+5$.
\end{proof}

\section*{Acknowledgements}
Claude Code assisted with write-up and preparation of figures. The author is responsible for the content.

\end{document}